\tikzset{main node/.style={circle,draw,minimum size=.3cm,inner sep=0pt}, }
\definecolor{cbgold}{RGB}{255,193,7}
\definecolor{cbblue}{RGB}{30,136,229}
\definecolor{cbpink}{RGB}{216,27,96} 
\definecolor{cbgreen}{RGB}{0,77,64} 
\newtheorem{thm}{Theorem}[section]
\newtheorem{prop}[thm]{Proposition}
\newtheorem{cor}[thm]{Corollary}
\newtheorem{defn}[thm]{Definition}
\newtheorem{rem}[thm]{Remark} 
\newtheorem{lem}[thm]{Lemma}
\newtheorem{example}[thm]{Example}
\newtheorem{rmk}[thm]{Remark}
 \def\Z{\mathbb{Z}} 
\def\Q{\mathbb{Q}}
\def\R{\mathbb{R}}
\def\C{\mathbb{C}}
\def\a{{\bf{a}}}
\def\e{{\bf{e}}}
\def\CC{\mathcal{C}} 
\def\H{\mathcal{H}}
\def\J{\mathcal{J}}
\def\FF{\mathcal{F}}
\def\G{\mathcal{G}}
\def\L{\mathcal{L}}
\def\KK{\mathcal{K}}
\def\A{\mathcal{A}}
\def\BG{\mathcal{BG}}
\def\SG{\mathcal{SG}}
\newcommand{\ef}{\mbox{effectively free}}
\newcommand{\triv}{\mbox{\upshape triv}\,}
\newcommand{\pr}{\pi}
\newcommand{\Field}{{\mathbb{F}}} 
\newcommand{\Rep}{{\mathcal{R}}}
\newcommand{\poly}{{polynomial }}
\newcommand{\possible}{{possible }}
\newcommand{\possiblee}{{possible}}
\title{Polymatroids are to finite groups as matroids are to finite fields }
\author{Ed Swartz\\ \small{Cornell University, Ithaca, NY}\\  \\Prairie Wentworth-Nice\\ \small{Cornell University, Ithaca, NY}\\  \\ Alexander Xue \\ \small{UCLA, Los Angeles, CA}}
\begin{document}
\maketitle

\begin{abstract}
Given a subgroup $\H$ of a  product of finite groups $\G = \displaystyle\prod^n_{i=1} \Gamma_i$ and $b>1,$ we define a polymatroid $P(\H,b).$  If all of the $\Gamma_i$ are isomorphic to $\Z/p\Z,~p$ a prime, and $b=p,$ then $P(\H,b)$ is the usual matroid associated to any $\Z/p\Z$-matrix whose row space equals $\H.$ In general, there are many ways in which the relationship between $P(\H,b)$ and $\H$ mirrors that of the relationship between a matroid and a  subspace of a finite vector space.  These include representability by excluded minors, the Crapo-Rota critical theorem, the existence of a concrete algebraic object representing the polymatroid dual of $P(\H,b),$ analogs of Greene's theorem and the MacWilliams identities when $\H$ is a group code over a nonabelian group, and a connection to the combinatorial Laplacian of a quotient space determined by $\G$ and $\H.$  We use the group Crapo-Rota critical theorem to demonstrate an extension to hypergraphs of the classical duality between  proper colorings and nowhere-zero flows on graphs.

\end{abstract}

\section{Introduction}

  It has been almost ninety years since Whitney began the theory of matroids \cite{Wh}.  Since then, the reach and interest in matroids has expanded so dramatically that it is no longer possible to give a reasonably sized  review of the literature involved.  A significant portion of that literature involves linear algebra over finite fields.   One of the most common settings for matroids starts with a matrix $A$ with coefficients in a finite field $\Field.$ Let $E$ be the columns of $A.$ Then $E$ is the ground set for a matroid $M[A]$. The rank function, $r_A,$  of $M[A]$ assigns to every subset $S$ of the columns of $A,$ the dimension of the span of the columns in $S$ in the column space of $A.$     An alternative approach is to start with a subspace $W$ of the  $\Field$-vector space $\Field^E.$  Then choose a matrix $A$ whose row space equals $W$ and set $M[W]$ to be the matroid $M[A].$  Since matrices with the same row space have the same column dependence relations, $M[W]$ is well defined.  It was this point of view, applied to elements of the complex Grassmannian and its moment map, that led Gelfand, Goresky, MacPherson, and Serganova to rediscover  Edmonds' matroid polytopes \cite{GGMS}.

Matroid polytopes are a special case of polymatroid polytopes.  The latter were introduced by Edmonds   over fifty years ago as a polyhedral generalization of the matroid greedy algorithm, which itself was a generalization of the greedy algorithm for  finding maximal weight spanning trees  in a graph \cite{E}.  Since Postnikov's influential paper \cite{Po}, polymatroids have also been known as `generalized permutahedra'. Investigations from that perspective tend to emphasize the polyhedral and algebraic geometry of Edmonds' original polytopal presentation.  We do not examine any connection between this approach and ours.  However, it seems reasonable to hope, indeed expect, that the two lines of research will eventually intersect.

Our main objective is to demonstrate that the relationship between subspaces $W$ of a finite vector space $\Field^E$ and their associated matroids $M[W],$   is similar to a previously unrecognized relationship between polymatroids and subgroups of a product of finite groups.   Given such a $W,$ let $A$ be a matrix whose row space equals $W.$ For $S \subseteq E$ let $\pr_S(W)$ be the projection of $W$ to $\Field^S.$ Since $\Field$ is finite,  $r_A(S)$ equals   $\log_{|\Field|} |\pr_S(W)|.$  We can think of $r_A$ as a CT-scan of $W.$ Just as a CT-scan measures how much mass interferes with x-rays from a variety of projections, $r_A$ records how much of $W$ lies in a variety of projections.  

  Let us replace $\Field^E$ with a  product of finite groups $\G = \displaystyle\prod_{x \in E} \Gamma_x,$  and replace $W$ with a subgroup $\H$ of $\G.$  Fix a real number $b >1.$  Now our figurative CT-scan of $\H$ consists of the set  of logarithms base $b$ of the cardinality of the projections of $\H$ to $\displaystyle\prod_{x \in S} \Gamma_x$ for all $S \subseteq E.$  This collection of data forms a polymatroid (Theorem \ref{the polymatroid}) which we denote by $P(\H,b)$ or $P(\H)$ for brevity.   
  
  An algebraic characterization of subgroups of a product of two groups goes back to  Goursat's Lemma \cite{Go}.  Bauer, Sen and Zvengrowski established algebraic criteria for subgroups of an arbitrary finite product of groups \cite{BSZ}.  Our purpose here is very different.  
   Most of the paper is devoted to showing several ways in which $P(\H)$ interacts with $\H$ in a fashion similar to the way $M[W]$ interacts with $W.$   We do this for several areas where matroids and finite fields meet. The last two are new even for matroids.   

\begin{itemize}
\item The class of matroids of the form $M[W]$ as $W$ ranges over all subspaces of finite $\Field$-vector spaces is closed under minors.  The class of polymatroids of the form $P[\H]$ as $\H$ ranges over all subgroups of finite products of a fixed finite group $\Gamma$ is closed under minors. (Section \ref{Representability})

\item Theorem \ref{group Crapo-Rota}  is a Crapo-Rota critical formula for finite groups which is virtually identical to its finite field counterpart.  We use this theorem to give a uniform critical theorem proof of Helgason's formula for the number of proper colorings of a hypergraph in terms of the characteristic polynomial of an associated polymatroid.  We are also led by Theorem \ref{group Crapo-Rota} to a hypergraph analog of the classical duality between proper colorings and nowhere-zero flows on graphs. (Theorem \ref{hypergraph flows}) 

\item As Whitney showed in \cite{Wh}, if $W^\perp$ is the orthogonal complement of $W$ in $\Field^E,$  then $M[W^\perp]$ is the matroid dual of $M[W].$ We define  a representation-theoretic object, denoted by $\Rep(\H),$ which is a realization of the polymatroid dual of $P(\H)$.  Many duality properties of $M[W^\perp]$ in the matroid setting are mimicked by $\Rep(\H)$ in the polymatroid setting. (Section \ref{duality for realized polymatroids})

\item Viewed as a linear code, $W^\perp$ is the dual code of $W$ and satisfies Greene's theorem which relates the Tutte polynomial of $M[W]$ to the  weight enumerator of $W.$ This leads to a simple proof of the famous MacWilliams identity between the code weight enumerator of $W$ and that of its dual code.  When viewed as a group code, $\H$ and $\Rep(\H)$ satisfy  an analog of Greene's  theorem which leads to a  MacWilliams identity for $\H$ and $\Rep(\H).$ When $\H$ is a subgroup of $(\Z/p\Z)^n,~p$ a prime, this is the usual MacWilliams identity.  There is also a MacWilliams identity relating the complete weight enumerators of $\H$ and $\Rep(\H)$ \cite{We}. If $\H \le \Gamma^E$ with $\Gamma$ an abelian group, then the MacWilliams identity for the complete weight enumerator of $\H$ and $\Rep(\H)$ is the usual one. (Section 10) 

\item The elementary abelian $2$-group $(\Z/2\Z)^n$ acts coordinate-wise on $\partial CP_n,$ the boundary of the $n$-dimensional cross polytope.  Let $W$ be a subgroup of $(\Z/2\Z)^n,$ so  $W$ inherits this action.  Denote the quotient space $\partial CP_n/W$ by $X_W.$  If $M[W]$ has no coloops, then the eigenvalues of the top-dimensional combinatorial Laplacian applied to $X_W$ contains the same information as the code weight enumerator of the dual code $W^\perp$ (Theorem \ref{binary eigenvalues}).   A statement which reduces to this result holds for $\H$ a subgroup of $\G$ with $\Rep(\H)$ substituting for $W^\perp$ and a  quotient space $X_\H$ derived from $\G$ and $\H$ replacing $\partial CP_n.$ (Theorem \ref{group eigenvalues})

 \item The dimension of $H^{n-1}(X_W; \Q)$ equals the characteristic polynomial of $M[W]^\ast$ evaluated at two.  The characteristic polynomial of $P(\H,b)^\ast$ evaluated at $b$ equals $H^{n-1}(X_\H; \C).$ (Theorem \ref{char poly as top dimension}) 

\end{itemize}

A secondary, perhaps more subtle, objective of this paper is  the simplicity of the proof of Theorem \ref{the polymatroid}.   Anytime subobjects of product spaces are under consideration, especially in an algebraic setting, there is a possibility that an interesting polymatroid is hiding in plain sight.  For an example, see \cite{CCLMZ}. 

By using rank functions which agree with the usual matroid rank functions when the groups are the additive groups of vector subspaces, our rank functions are frequently not integers.  While this shines a clear light on the similarities between the polymatroid and matroid settings, it does have the effect of, to a certain extent, hiding  the fundamental integral nature of the objects involved.   In principle, using something along the lines of the R\'edei function of \cite{KMR} could keep everything in the integers.  Whether or not a purely integral approach leads to better insight or understanding remains to be seen.  

\section{Matroids and polymatroids} \label{Notations and definitions}

In this section we introduce several basic matroid and polymatroid notions. Other definitions and notation for matroids, polymatroids and other subjects will appear later as they are needed.

A {\bf polymatroid} is a pair, $P=(E,r),$ where $E$ is a  finite ground set and $r$ is a rank function $r:2^E \to \R_{\ge 0}$ such that 
\begin{enumerate}
\item[P1.] $r(\emptyset) = 0.$ (normalization)
\item[P2.] If $S \subseteq T,$ then $r(S) \le r(T).$ (monotonicity)
\item[P3.] \label{submodularity} For all $S,T \subseteq E,~r(S) + r(T) \ge r(S \cup T) + r(S \cap T).$ (submodularity)
\end{enumerate}
\noindent A cryptomorphic definition is obtained by replacing P3 with the axiom of {\bf diminishing returns}:  
\begin{enumerate}
\item[P3'.]  For all $S \subseteq T$ and $x \in E,$
 $r(S \cup \{x\}) - r(S) \ge r(T \cup \{x\}) - r(T).$
\end {enumerate}

Two polymatroids $P=(E,r)$ and $P'=(E', r')$  are {\bf isomorphic}, written $P \cong P',$  if there is a rank-preserving bijection from $E$ to $E'.$ 
Throughout the paper $P=(E,r)$ is a polymatroid, $n = |E|$, and $[n]=\{1, \dots, n\}.$   Often $P$ (and $E$ and $r$) will be decorated in various ways in order to distinguish explicitly which polymatroid is under consideration.   

A polymatroid is {\bf subcardinal} if for all $S \subseteq E, r(S) \le |S|.$  A subcardinal polymatroid which is integer valued is a {\bf matroid}.  From here on $M$ will be a matroid, frequently decorated to specify which matroid.    Matroids were first defined by Whitney \cite{Wh}.  For a more complete early history of matroids see \cite{Ku}. This includes  a frequently forgotten contemporaneous equivalent definition by Nakasawa \cite{Na1}, \cite{Na2}, \cite{Na3}.   Polymatroids were first introduced by Edmonds as a polytopal generalization of the greedy algorithm for matroids \cite{E}.  

The prototypical example of a matroid starts with an $m \times n$  matrix $A$ with coefficients in a field $\Field.$   The elements of the ground set  $E$ are the columns of $A$ in $\Field^m.$ Distinct columns of $A$ are distinct elements of $E,$ even if as column vectors in $\Field^m$ they are identical.  The rank $r_A(S)$  of a subset $S$ of $E$ is the dimension of the span of the column vectors in $S.$ A matroid is {\bf representable over $\Field$} if it is isomorphic to $(E,r_A)$ for some $A.$ In that case $A$ is a {\bf realization} of the matroid.  Usually we use $M[A]$ to denote $(E, r_A).$ 

An alternative, but equivalent, approach to matroid representability is through subspaces.  Suppose $W$ is a subspace of $\Field^n.$ If $A$ and $B$ are two matrices whose row spaces $R(A)$ and $R(B)$ are equal to $W,$ then $M[A]$ is isomorphic to $M[B]$.  Indeed,  $A$ and $B$ are connected through a series of elementary row operations (where inserting or removing a row of zeros is included as an elementary row operation) and row operations do not alter column independence relations.  In this situation, we will use $M[W]$ to stand for this isomorphism class of matroids.  As with matrices, if $M$ is a matroid and $M$ is isomorphic to $M[W],$ then $M$ is representable over $\Field,$ and $W$ is a realization of $M.$ 

Another motivating example for matroids comes from graphs.  Given a graph $G,$ let $E(G)$ be the edge set of $G.$  The rank of a subset  $S$ of edges of $G$ is assigned the rank $r_G(S)$ equal to the cardinality of the largest acyclic subset of edges in $S.$ Then $(E(G), r_G)$ is the {\bf cycle matroid of $G$} which we denote by $M[G].$ In fact, $M[G]$ is equal to $M[A]$ where $A$ is the following adjacency matrix.   First choose an orientation of the edges of $G.$ The columns of $A$ are indexed by edges  of $G$ and the rows are indexed by the vertices of $G.$  The entry of $A$ corresponding to an edge $x$ and vertex $v$ is: $1,$ if $v$ is the head of $x,~-1$ if $v$ is the tail of $x,$ and $0$ if $x$ and $v$ are not incident or $x$ is a loop.  Then $A$ is a realization of $M[G]$ over every field.

A (usually not subcardinal) polymatroid whose rank function is integer-valued is often referred to as a {\bf discrete} or {\bf integer} polymatroid.  Some authors define polymatroids to be integer valued.   Another common nomenclature is {\bf $k$-polymatroids} for discrete polymatroids such that $k \ge \max_{x \in E} r(\{x\}).$  One way to produce integer polymatroids is to start with a matroid $M=(E,r)$ and let $\{T_1, \dots, T_n\}$ be a multiset of subsets of $E.$  For $S \subseteq [n]$ define $r'(S)$ to be the rank in $M$ of $\displaystyle\cup_{i \in S} T_i.$  Then $P=([n], r')$ is a polymatroid.  In fact, all integer polymatroids are isomorphic to such a construction 
\cite[Theorem 11.1.9]{Ox}. 
  In this case, {\bf $P$ is constructed from} $M$ and $\{T_1, \dots, T_n\}.$  When there is no cause for confusion, the reference to $\{T_1, \dots, T_n\}$ is omitted. 
An integer polymatroid $P$ is {\bf representable} over a field $\Field$ if there exists a matrix $A$ with coefficients in $\Field$ such that $P$ is constructed from $M[A].$   In this case $A$ (and $\{T_1, \dots, T_n\}$) is a {\bf realization} of $P.$  In a fashion similar to matroids, representability of polymatroids over a field $\Field$ can be approached using a subspace $W$ of $\Field^m,$  where $m = \displaystyle\sum^n_{i=1} r(\{i\}),$ and choosing a basis of $W$ to be the rows of $A.$

   As for matroids, a {\bf flat} of a polymatroid $P$ is a subset $S$ of $E$ such that for all $x$ not in $S,  ~r(S \cup \{x\}) >r(S).$    The poset of the flats of a polymatroid ordered by inclusion forms a lattice, called the {\bf lattice of flats of $P$},  with meet being intersection.   In fact, any finite lattice is the lattice of flats of a polymatroid.  One can even insist that the polymatroid be one constructed from $M[K_n],$ where $K_n$ is the complete graph on $n$ vertices.  See \cite{RS} for a description and history of this result.  We will use $\L_P$ to represent the lattice of flats of the polymatroid $P.$

   There is a polymatroid analog of the characteristic polynomial of a matroid.  However, unless the polymatroid is discrete, it is generally  not a polynomial.  Despite this, we will still call it the {\bf characteristic \poly} of a polymatroid.  It is defined completely analogously to the definition of the characteristic polynomial of a matroid.  
   $$\mathlarger{\chi}_P(t) = \displaystyle\sum_{S \subseteq E} (-1)^{|S|} t^{r(E)-r(S)}.$$
   
   The characteristic \poly  of a polymatroid $P$ has many properties in common with its matroid counterpart.  For instance, the characteristic \poly of $P$ is the zero function if $P$ has an element $x \in E$ such that $r(\{x\})  = 0.$ Such an element is called a {\bf loop}. There is also a formula for $\mathlarger{\chi}_P$ based on the M\"obius function $\mu_P$ of $\L_P$ that is identical to the usual matroid definition.   
   
   \begin{prop} \label{mobius char poly}
   If $P$ has a loop, then $\mathlarger{\chi}_P \equiv 0.$ If $P$ does not contain a loop, then 
   $$\mathlarger{\chi}_P(t) = \displaystyle\sum_{S \in \L_P} \mu_{\L_P}(\emptyset, S) t^{r(E)-r(S)}.$$
   \end{prop}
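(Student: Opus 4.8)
The plan is to follow the classical Möbius-inversion argument for the matroid characteristic polynomial essentially verbatim, checking that nothing about it used integrality of the rank function. First I would dispose of the loop case: if $r(\{x\}) = 0$ for some $x \in E$, then by submodularity (or P3') every subset $S$ satisfies $r(S \cup \{x\}) = r(S)$, so the summands for $S$ and $S \cup \{x\}$ in the defining sum $\sum_{S \subseteq E} (-1)^{|S|} t^{r(E)-r(S)}$ have equal exponents and opposite signs; pairing $S \mapsto S \triangle \{x\}$ over all $S$ shows $\mathlarger{\chi}_P \equiv 0$.

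For the loopless case, I would use the closure-type operation: for $S \subseteq E$ let $\cl(S)$ be the smallest flat containing $S$, i.e.\ the union of $S$ with all $x$ such that $r(S \cup \{x\}) = r(S)$; one checks via P3' that this set is a flat and that $r(\cl(S)) = r(S)$, and that flats are exactly the fixed points of $\cl$. Then I would group the terms of $\mathlarger{\chi}_P(t) = \sum_{S \subseteq E} (-1)^{|S|} t^{r(E)-r(S)}$ according to their closure: since $r$ is constant on each fiber $\cl^{-1}(F)$ (equal to $r(F)$), we get
\[
\mathlarger{\chi}_P(t) = \sum_{F \in \L_P} \left( \sum_{S : \cl(S) = F} (-1)^{|S|} \right) t^{r(E) - r(F)}.
\]
So it remains to show $\sum_{S : \cl(S) = F} (-1)^{|S|} = \mu_{\L_P}(\emptyset, F)$, where $\emptyset$ is the bottom flat (which is $\cl(\emptyset) = \emptyset$ precisely because $P$ is loopless).

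To identify that inner sum with the Möbius function, I would argue that for a fixed flat $G$,
\[
\sum_{S : \cl(S) \le G} (-1)^{|S|} = \sum_{S \subseteq G} (-1)^{|S|} = [\, G = \emptyset \,],
\]
since $\cl(S) \le G$ iff $S \subseteq G$ (using that $\cl$ is the smallest flat containing $S$ and that flats are closed under the meet, which is intersection). Writing $f(G) := \sum_{S : \cl(S) = G} (-1)^{|S|}$, the displayed identity says $\sum_{G' \le G} f(G') = [\,G = \hat{0}\,]$ in the lattice $\L_P$, which is exactly the defining recursion of the Möbius function, so $f(G) = \mu_{\L_P}(\emptyset, G)$ by uniqueness of Möbius inversion (Proposition~2.3.3-type uniqueness for $\sum_{G' \le G}$). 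Substituting back gives the claimed formula.

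The only genuine subtlety—hence the step I'd flag as the main obstacle—is verifying that $\cl$ really does behave like a matroid closure operator in this non-integral, possibly non-subcardinal setting: specifically that $\cl(S)$ is a flat, that $r(\cl(S)) = r(S)$, and that $\cl(S)$ is contained in every flat containing $S$. All three follow from P3' (diminishing returns): if $x, y$ both satisfy $r(S \cup \{x\}) = r(S)$ then $r(S \cup \{x,y\}) \le r(S \cup \{y\}) + r(S\cup\{x\}) - r(S) = r(S)$ by submodularity, which gives closure under adding "dependent" elements and hence that $\cl(S)$ is a flat with the same rank; and any flat $F \supseteq S$ must contain every such $x$ because $r(F) \le r(F \cup \{x\}) \le r(F) + r(S \cup \{x\}) - r(S) = r(F)$ forces $x \in F$. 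Once this lemma is in hand the rest is formal and identical to the matroid case.
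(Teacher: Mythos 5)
Your proof is correct and is exactly the argument the paper is pointing to: the paper's ``proof'' is a one-line citation (``the same proof as for matroids works here; see Zaslavsky, Prop.~7.2.1''), and you have carried out that standard Möbius-inversion-over-the-lattice-of-flats argument in full, including the non-trivial part of checking (from P3'/submodularity alone, with no appeal to integrality) that the closure operator $\cl$ is well defined, rank-preserving, and lands on the smallest flat containing its argument.
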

   \begin{proof}
   The same proof as for matroids works here. See, for instance, \cite[Proposition 7.2.1]{Za}.
   \end{proof}

   \section{From finite groups to polymatroids} \label{groups to polymatroids}
The notation in this paragraph will be used in the remainder of the paper.  For all $x \in E,~\Gamma_x$ is a finite group.  In general, $\Gamma_x$ may or may not be isomorphic to $\Gamma_y.$   In addition, $\G = \displaystyle\prod_{x \in E} \Gamma_x,$ and $\H$ is a subgroup of $\G.$  For $S \subseteq E,~\G_
S = \displaystyle\prod_{x \in S} \Gamma_x$ and $\pr_S$ is the projection map $\pr_S: \G \to \G_S.$ We denote the image  $\pr_S(\H)$ by $\H_S.$  By convention,  $\G_\emptyset$ and $\H_\emptyset$ are the trivial groups. 

The  connection between a subgroup $\H \le \G$ and polymatroids is through a rank function.  Choose $b > 1, b \in \R.$ For all $S \subseteq E$ define
\begin{equation} \label{rank function}
  r_{\H,b}(S) \equiv \log_b |\H_S|.
\end{equation}
\noindent  Different choices of $b$ lead to rank functions which differ by a constant.  Indeed, 
 $$r_{\H,b} = \frac{\ln b'}{\ln b}r_{\H,b'}.$$
\begin{thm}  \label{the polymatroid}
 The pair $(E, r_{\H,b})$ is a  polymatroid.
\end{thm}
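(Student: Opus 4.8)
The plan is to verify the three polymatroid axioms P1, P2, P3 directly for the function $r_{\H,b}(S) = \log_b|\H_S|$. Since $\log_b$ is monotone increasing on the positive reals and takes the value $0$ at $1$, and since $|\H_S| \ge 1$ is a positive integer for every $S$, it suffices to establish the corresponding multiplicative statements about the cardinalities $|\H_S|$: namely $|\H_\emptyset| = 1$, that $|\H_S| \le |\H_T|$ when $S \subseteq T$, and the submodular inequality $|\H_S|\cdot|\H_T| \ge |\H_{S\cup T}|\cdot|\H_{S\cap T}|$.

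First, P1 is immediate: $\H_\emptyset$ is the trivial group by the stated convention, so $r_{\H,b}(\emptyset) = \log_b 1 = 0$. For P2 (monotonicity), observe that when $S \subseteq T$ the projection $\pr_S \colon \G_T \to \G_S$ restricts to a surjective group homomorphism $\H_T \twoheadrightarrow \H_S$ (it is surjective because $\pr_S = \pr_S^{\G_T} \circ \pr_T$ on $\G$, and $\pr_T(\H) = \H_T$); hence $|\H_S|$ divides $|\H_T|$, so in particular $|\H_S| \le |\H_T|$ and $r_{\H,b}(S) \le r_{\H,b}(T)$.

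The substance is P3 (submodularity). Fix $S, T \subseteq E$ and write $U = S \cup T$, $I = S \cap T$. Consider the group homomorphism $\phi\colon \H_U \to \H_S \times \H_T$ given by $\phi(h) = (\pr_S(h), \pr_T(h))$, where here $\pr_S, \pr_T$ denote the coordinate projections out of $\G_U$. This map is well defined and injective: if $\pr_S(h)$ and $\pr_T(h)$ are both trivial then $h$ is trivial in every coordinate of $S \cup T = U$. Therefore $|\H_U| \le |\im \phi|$. The key point is to bound $|\im\phi|$. Composing $\phi$ with the difference map into $\H_I$ — more precisely, let $q\colon \H_S \times \H_T \to \G_I$ send $(a,b) \mapsto \pr^S_I(a)\cdot \pr^T_I(b)^{-1}$, a homomorphism since $\G_I$ need not be abelian but this particular combination is still multiplicative when restricted appropriately; if one worries about non-abelianness, instead observe directly that the fibers of the composite $\H_U \to \H_S \xrightarrow{\pr_I} \H_I$ all have the same size, equal to $|\{h \in \H_U : \pr_S(h) = 1\}|$, and this kernel injects into $\H_{T \setminus S} \subseteq \H_T$ with image contained in $\{t \in \H_T : \pr_I(t) = 1\}$, the kernel of $\H_T \to \H_I$. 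Combining, $|\H_U| = |\H_S| \cdot |\ker(\H_U \to \H_S)| \le |\H_S| \cdot |\ker(\H_T \to \H_I)| = |\H_S| \cdot |\H_T| / |\H_I|$, which is exactly the desired inequality $|\H_S|\cdot|\H_T| \ge |\H_{S\cup T}|\cdot|\H_{S\cap T}|$. Taking $\log_b$ gives P3.

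The main obstacle is the one flagged above: making the submodularity argument work without assuming the groups $\Gamma_x$ are abelian, since then $\H_S$, $\H_T$, $\H_I$ are not subspaces and one cannot simply count dimensions or use orthogonal complements. The resolution is to phrase everything in terms of kernels of surjective homomorphisms and the orbit–stabilizer / Lagrange counting $|\H_U| = |\mathrm{image}| \cdot |\mathrm{kernel}|$, and to exhibit the injection $\ker(\H_U \twoheadrightarrow \H_S) \hookrightarrow \ker(\H_T \twoheadrightarrow \H_I)$ coordinate-wise: an element of $\H_U$ trivial on all of $S$ is in particular trivial on $I = S\cap T$, and its restriction to $T$ lands in the kernel of $\pr_I$ on $\H_T$; injectivity of this restriction holds because an element trivial on $S$ and trivial on $T$ is trivial on $U$. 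Once this injection is in hand, the counting inequality follows from Lagrange's theorem alone, with no commutativity needed, and the rest is the bookkeeping of translating the multiplicative inequalities through $\log_b$.
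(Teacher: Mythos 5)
Your final argument is correct and runs on the same engine as the paper's proof: a cardinality comparison of kernels obtained by projecting one kernel injectively into another, together with the Lagrange-type counting $|\H_U| = |\H_S|\cdot|\ker(\H_U\to\H_S)|$. The only real difference is the choice of which submodularity axiom to verify. The paper checks the cryptomorphic diminishing-returns axiom P3' (for $S\subseteq T$ and $x\notin T$, inject $\ker(\H_{T\cup\{x\}}\to\H_T)$ into $\ker(\H_{S\cup\{x\}}\to\H_S)$ via $\pr_{S\cup\{x\}}$), while you verify full submodularity P3 directly (inject $\ker(\H_{S\cup T}\to\H_S)$ into $\ker(\H_T\to\H_{S\cap T})$ via $\pr_T$); after the relabeling $A=T$, $B=S\cup\{x\}$, the paper's injection is literally the specialization of yours. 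So this is the same proof, stated at the general level rather than the singleton-extension level; neither version is meaningfully longer or harder.

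One genuine flaw in the write-up, though not in the eventual argument: the digression about the map $q\colon\H_S\times\H_T\to\G_I$, $(a,b)\mapsto\pr_I(a)\cdot\pr_I(b)^{-1}$, asserts that ``this particular combination is still multiplicative when restricted appropriately.'' That is false: for nonabelian $\G_I$, $q$ is not a group homomorphism on any natural domain, and there is no ``appropriate restriction'' that rescues it without effectively abandoning the map. You then correctly abandon it (``if one worries about non-abelianness, instead observe directly\dots'') and the replacement argument is airtight, but the paragraph as written contains a wrong statement of fact. In a final draft you should delete the homomorphism $\phi$, the map $q$, and the parenthetical justification entirely, and lead with the kernel-injection argument you end with. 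Also note the small typo: since $\phi$ is injective you get $|\H_U|=|\im\phi|$, not $|\H_U|\le|\im\phi|$; but since $\phi$ is not used in the final count, this does not matter.
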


\begin{proof}
 Normality and monotonicity are evident, so let us check the law of diminishing returns for $S \subseteq T \subseteq E.$ If $x \in T$ then, (3') is automatically satisfied, so assume that $x \notin T.$  In general, for any $S \subseteq E,$
 
 \begin{equation} \label{one more} r_{\H,b}(S \cup \{x\}) - r_{\H,b}(S)  = \log_b \frac{ |\H_{S \cup \{x\}}|}{|\H_S|} = \log_b |\ker \pr_S:\H_{S \cup \{x\}} \to \H_S|.
 \end{equation}
 
 Let $\KK_S$ be $\ker \pr_S:\H_{S \cup \{x\}} \to \H_S$ and $\KK_T$ be $\ker \pr_T:\H_{T \cup \{x\}} \to \H_T.$ Since $S \subseteq T,~\pr_{S \cup \{x\}}: \G_{T \cup \{x\}} \to \G_{S \cup \{x\}}$ becomes an injection $\pr_{S \cup \{x\}}: \KK_T \to \KK_S$  when restricted to $\KK_T.$ Hence $|\KK_T| \le |\KK_S|$ as required.  
\end{proof}

\noindent We will use $P(\H,b)$ to denote the polymatroid $(E,r_{\H,b}).$  Most of the time  $b$ is understood or not relevant, so we suppress it and use $P(\H).$

\begin{example} \label{Z/pZ}
Let $E = [n]$ and for all $1 \le i \le n,~\Gamma_i = \Z/p\Z,~p$ a fixed prime, and $\G = (\Z/p\Z)^n. $  A subgroup $\H$ of $\G$ is a also a subspace of the vector space $\G$ over $\Field=\Z/p\Z.$ Then, with $b=p,~P(\H)$  is a matroid and is isomorphic to $M[\H].$ 
\end{example}

In principle, the definition of $r_{\H,b}$ makes sense for any collection of  sets $\{\Lambda_x\}$ indexed by $E$ and finite subset $\H$ of $\prod_{x \in E} {\Lambda}_x.$  It is not clear what sort of structure is needed on the $\H$ and the $\Lambda_x$ to guarantee that $r_{\H,b}$ is submodular.  What is clear is that some structure is needed. 

\begin{example} \label{not submodular}
Let $\Lambda_1=\Lambda_2=\Lambda_3 = \{1,2\}, E=\{1,2,3\},$ and $L \subseteq \Lambda_1 \times \Lambda_2 \times \Lambda_3$ be $$L = \{(2,2,2),(2,1,2),(1,1,2),(1,1,1),(2,1,1)\}.$$
\noindent Then $|L_{\{1,2\}}| \cdot |L_{\{2,3\}}|=3\cdot3 <|L_{\{1,2,3\}}| \cdot |L_{\{2\}}|=5\cdot2.$ Hence $r_{L,b}$ is not submodular for all $b.$

\end{example}

\section{Representability} \label{Representability}
Throughout Sections \ref{Representability}, \ref{Matroid representability} and \ref{Polymatroid representability}  we will be assuming that all of the $\Gamma_x$ are isomorphic to a fixed finite group $\Gamma$ and that $b = |\Gamma|.$ Consequently we will suppress all appearances  of $b$ from the notation.   In this section we introduce the definition of a polymatroid being representable over a finite group that  is analogous to  matroid representability over a field.

\begin{defn}
A polymatroid $P=(E,r)$ is {\bf representable over a finite group $\Gamma$} if there exists a finite set $E'$ and subgroup $\H$ of $ \Gamma^{E'}$ such that $P$ is isomorphic to $P(\H).$  In this case, $\H$ is a {\bf $\Gamma$-realization} of $P.$
\end{defn}

 If $\Gamma = \Z/p\Z,~p$ a prime, then  Example \ref{Z/pZ} shows that  a polymatroid $P$ is representable over the group $\Z/p\Z$ if and only if it is representable over the field $\Z/p\Z.$  In both cases, $P$ must actually be a matroid.    Another way that polymatroid realizability over a group mirrors matroid realizability over a field  is that it is closed under deletion, contraction and hence minors.   

Given  $S \subseteq E$ the {\bf deletion} of $S$ from $P$ is the polymatroid $P \setminus S$ with ground set $E-S$ and rank function equal to $r$ restricted to the subsets of $E-S.$   If $\H$ is a $\Gamma$-realization of $P,$ then $\H_{E-S}$ is a $\Gamma$-realization of $P \setminus S.$ 

Another polymatroid with ground set $E-S$ is $P/S,$ the {\bf contraction} of $P$ by $S.$  For $T \subseteq E-S,$ the rank function of $P/S$ is

$$r_{P/S}(T) = r(S \cup T) -r(S).$$

\noindent If $\H$ is a realization of $P$, then $\KK = \pr_{E-S} ( \ker \pr_S)$ is a realization of $P/S.$ To see this,  let $T \subseteq E- S.$   Then $\pr_T$ is an isomorphism between  $\KK_T $ and $\ker \pr_S: \H_{S \cup T} \to \H_S.$  Since $\pr_S$ is a surjection, 
$$|\KK_T|=|\ker \pr_S:\H_{S \cup T} \to \H_S| = \frac{|\H_{S \cup T}|} {|\H_S|}.$$

Minors of polymatroids are defined as they are for matroids.  A {\bf minor} of a polymatroid $P$ is a polymatroid $Q$ that can be obtained from $P$ by a sequence of deletions and/or contractions.  Let $\CC(\Gamma)$ be the isomorphism classes of polymatroids representable over $\Gamma.$ The above discussion shows that $\CC(\Gamma)$ is closed under minors.  Let $\CC$ be a class of polymatroids. We say $\CC$ is {\bf minor closed} if $\CC$ is  closed under isomorphism and minors.    An {\bf excluded minor} of $\CC$ is a polymatroid $P$ such that $P \notin \CC,$ but every deletion and contraction of $P$ is in $\CC.$  Minor closed classes of polymatroids can be described by listing their excluded minors.  Before discussing excluded minors for $\CC(\Gamma)$ we consider excluded minors for {\it matroids} representable over $\Gamma.$  

\section{Matroid representability} \label{Matroid representability}

In this section we restrict our attention to representability of matroids over a finite group $\Gamma.$ We denote the class of matroids representable over $\Gamma$ by $\CC^{Ma}(\Gamma).$ For instance, the polymatroid $P=(\{1,2,3,4\},r)$ with $r(S) = \min (1, |S|/2 )$ is an excluded minor for $\CC(\Z/4\Z),$ but is not a matroid and hence not an excluded minor for $\CC^{Ma}(\Z/4\Z).$ 

Given nonnegative integers $d \le n,$ the {\bf uniform matroid $U_{d,n}$} is the matroid with ground set $[n]$  and rank function $r(S) = \min (d, |S|).$  Two distinct elements $\{x,y\}$ in a polymatroid $P=(E,r)$ are {\bf parallel} if $0 < r(\{x\}) = r(\{y\}) = r(\{x,y\}).$  Diminishing returns (P3') implies that the relation `is parallel to' is an equivalence relation and the equivalence classes are called the {\bf parallel classes} of the polymatroid.  The {\bf simplification} of a matroid is the matroid obtained by deleting all loops and all but one element of every parallel class.

The unique excluded minor for $\CC^{Ma}(\Gamma)$ when $\Gamma$ is a nonabelian group is $U_{2,3}.$ Specifically, we have the following result.  

\begin{thm}
Let $\Gamma$ be a nonabelian group.  Then the following are equivalent for a matroid $M.$
\begin{enumerate}
  \item $M \in \CC^{Ma}(\Gamma)$.
  \item $M$ does not have $U_{2,3}$ as a minor.
  \item The simplification of $M$ is $U_{n,n}$ for some $n,$ or $M$ consists entirely of loops. 
  \item $M$ is representable over every finite group. 
\end{enumerate}
\end{thm}

\begin{proof}

(1) implies (2):  To avoid confusion with too many numerals, we use $E = \{x,y,z\}$ as the ground set of $U_{2,3}.$ Suppose $\H$ is a subgroup of $\Gamma_x \times \Gamma_y \times \Gamma_z$ so that $P(\H)$ is isomorphic to $U_{2,3}.$  Let $\gamma$ and $\gamma'$ be elements of $\Gamma$ which do not commute. Since $r(\{x,y\}) = 2 = r(\{x,y,z\})$ we know that $\H_{\{x,y\}}$ is $\Gamma_x \times \Gamma_y$ and for every ordered pair $(\gamma_x, \gamma_y) \in \Gamma_x \times \Gamma_y$ there exists a unique $\gamma_z \in \Gamma_z$ such that $(\gamma_x, \gamma_y, \gamma_z) \in \H.$  Applying the same reasoning to $(1_\Gamma, \gamma) \in \Gamma_x \times \Gamma_z$ and $(1_\Gamma, \gamma') \in \Gamma_y \times \Gamma_z,$  there exist  triples $(1_\Gamma, \gamma_y, \gamma)$ and $(\gamma_x, 1_\Gamma, \gamma')$ in $\H.$ But this implies that $(\gamma_x, \gamma_y, \gamma \cdot \gamma')$ and $(\gamma_x, \gamma_y, \gamma' \cdot \gamma)$ are both in $\H,$ a contradiction. 

(2) implies (3) is a  matroid exercise.

(3) implies (4):  A set of $n$ loops can be realized via $\H$ equal to the trivial subgroup of $\G= \Gamma^n.$ If we let $\H = \G = \Gamma^n,$ then $([n],r_\H)$ is isomorphic to $U_{n,n}.$ Now suppose $\H \subseteq \G = \Gamma^E$ realizes a matroid $M.$  To realize $M$ with a loop added, let $E'$ be $E$ with a new element $y$ included. For all $h \in \H,$ define $h' \in \Gamma^{E'}$ by $h'_x = h_x$ for $ x \in E$ and $h'_y = 1_\Gamma.$ Then $\H' = \{h':h \in \H\}$ realizes $M$ with a loop added.  Similarly, a new element parallel to $x \in E$  can be adjoined to the matroid represented by $\H$ by introducing a new coordinate $y$ and setting it equal to the $x$ coordinate for all $h \in \H.$ 

(4) implies (1) is immediate. 

\end{proof}

Determining the excluded minors for matroids representable over $\Z/p\Z$ for $p$ a prime has been a critical motivating problem for decades.  See for instance, \cite{Ox},  and the announcement of a proof of Rota's conjecture that there are a finite number of excluded minors for matroids realizable over any fixed finite field \cite{GGW}.  

A {\bf totally unimodular} matrix is a real matrix all of whose subdeterminants are $-1, 0$ or $1.$  In particular, all of the entries of the matrix are $-1,0$ or $1.$  A matroid $M$ is {\bf regular} if there exists a totally unimodular matrix $A$ such that $M$ is isomorphic to $M[A].$   The {\bf Fano} matroid is the matroid realized over $\Z/2\Z$ by the matrix whose columns are the seven nonzero vectors of $(\Z/2\Z)^3.$ The dual of the Fano is the matroid realized over $\Z/2\Z$ by the subspace of $(\Z/2\Z)^7$ which is orthogonal to the row space of the Fano. See Section \ref{Polymatroid duality} for an explanation for the term dual.  The following excluded minor characterization of regular matroids is originally due to Tutte. 

\begin{thm} \cite{Tu} 
For a matroid $M$ the following are equivalent.
\begin{itemize}
  \item $M$ is a regular matroid.
  \item $M$ does not have $U_{2,4},$ the Fano or the dual of the Fano as a minor. 
  \item $M$ is representable over the field $\Z/2\Z$ and a field $\Field$ whose characteristic is not two. 
  \item $M$ is representable over all fields.
\end{itemize}
\end{thm}

  Here are two results for non prime cyclic groups whose proofs will appear in \cite{We2}.  

\begin{thm}  \label{matroid Z2n}
If $n$ is even and greater than two, then $M$ is representable over $\Z/n\Z$ if and only if $M$ is regular. 
\end{thm}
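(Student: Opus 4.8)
The plan is to prove the two directions separately, leaning on the structure theory of subgroups of $(\Z/n\Z)^{E'}$ via Goursat-type decompositions together with the prime factorization $n = 2m$. First I would handle the easy direction: if $M$ is regular, then $M$ is representable over every field, in particular over $\Z/p\Z$ for every prime $p \mid n$; by the Chinese Remainder Theorem $\Z/n\Z \cong \prod_{p \mid n} \Z/p^{a_p}\Z$, and one can assemble a $\Z/n\Z$-realization coordinatewise from the realizations over the prime fields by first lifting a totally unimodular realization $A$ of $M$ to a matrix with entries in $\{-1,0,1\} \subseteq \Z/n\Z$ and taking $\H$ to be the $\Z/n\Z$-row space of $A$. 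The point is that because $A$ is totally unimodular, the cardinalities $|\H_S|$ can be computed prime-by-prime and each factor contributes $|\Field_p|^{\,r_A(S)}$, so $r_{\H,n}(S) = r_A(S)$. This needs a short lemma that a totally unimodular matrix, viewed over $\Z/n\Z$, has the property that the size of the projection of its row space to a subset $S$ of columns is $n^{\rk_S}$ where $\rk_S$ is the real (equivalently, rational) rank of the corresponding submatrix; this follows from Smith normal form and the fact that all the relevant invariant factors of a TU matrix are $1$.

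For the converse, suppose $M$ is representable over $\Z/n\Z$ with $n$ even, $n>2$, via $\H \le (\Z/n\Z)^{E'}$, and I must show $M$ is regular. Since $n>2$ and $n$ is even, $n$ has a prime divisor $p$; write $n = p^a q$ with $\gcd(p,q)=1$ and $p^a q$ a genuine nontrivial factorization in at least one way (if $n=2^a$ with $a\ge 2$, use $n = 2 \cdot 2^{a-1}$; if $n$ has an odd prime factor, use that). Then $\Z/n\Z \cong \Z/p^a\Z \times \Z/q\Z$ and correspondingly $\H$ projects (and in fact embeds, since these factors are coprime) into $\H^{(p)} \le (\Z/p^a\Z)^{E'}$ and $\H^{(q)} \le (\Z/q\Z)^{E'}$. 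The key claim is that $P(\H)$, being a \emph{matroid} (hence subcardinal and integer-valued), forces each of these two pieces to separately realize $M$: because $|\H_S| = |\H^{(p)}_S| \cdot |\H^{(q)}_S|$ and $\log_n$ of each factor is at most $|S|$ by subcardinality applied to $M$... actually the cleaner statement is that $r_{\H,n}(S) = \log_n|\H^{(p)}_S| + \log_n|\H^{(q)}_S|$, and since both summands are nonnegative and the left side is a matroid rank, a monotonicity/submodularity argument shows each summand is itself the rank function of a matroid realizing $M$ (over $\Z/p^a\Z$ and over $\Z/q\Z$ respectively). From here, representability over $\Z/p^a\Z$ should be pushed down to representability over $\Z/p\Z$ (using that the integer-polymatroid rank forces the $\H^{(p)}$ to behave like a vector space over $\F_p$ after quotienting by $p$), and similarly $\Z/q\Z$-representability over the prime factors of $q$; combining, $M$ is representable over $\Z/2\Z$ and over a field of odd characteristic, which by Tutte's theorem (\cite{Tu}, quoted above) is exactly regularity.

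The main obstacle I expect is the step showing that $\Z/p^a\Z$-representability of a \emph{matroid} collapses to $\Z/p\Z$-representability — i.e. that prime powers give nothing new at the matroid level. The subtlety is that a subgroup $\H^{(p)}$ of $(\Z/p^a\Z)^{E'}$ need not be free, so its projections can have sizes that are arbitrary powers of $p$; one must use the hypothesis that $P(\H^{(p)})$ is a matroid (integer-valued and subcardinal) to rule out "thick" behavior and extract an honest $\F_p$-subspace realizing $M$. I would approach this by showing that if $r_{\H^{(p)}}(\{x\}) \le 1$ for all $x$ (subcardinality on singletons) then after the change of rings $\Z/p^a\Z \to \Z/p\Z$ the induced subspace has the same rank function; the essential lemma is that for such $\H^{(p)}$, each coordinate's image is cyclic of order $1$ or $p$, and submodularity then propagates this to force $|\H^{(p)}_S| = p^{r(S)}$ exactly. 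A careful treatment of the case $n = 4$ (where the only factorization is $2\cdot 2$ and $\Z/4\Z$ is not a field) is worth isolating, since that is precisely the borderline instance the theorem is asserting, and the mixed terms $\Z/4\Z = \Z/2\Z \times$ (nothing) do not split — so here one argues directly that a matroid realized over $\Z/4\Z$ is realized over $\Z/2\Z$ by reduction mod $2$, combined with the observation that $\Z/4\Z$-representability also yields representability over $\Z/2\Z$ only, which is not enough; one needs the extra input that being a matroid over $\Z/4\Z$ with these constraints actually forces the reduction to lose no information and additionally forces a lift to characteristic-zero behavior. I would flag this as the delicate point and would verify it via Smith normal form bookkeeping on the defining relation matrix of $\H$.
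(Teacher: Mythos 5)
The paper does not actually contain a proof of this theorem. Immediately before the statement, the authors write that the proofs of Theorem~\ref{matroid Z2n} and Theorem~\ref{matroid cyclic} ``will appear in \cite{We2}'' (a paper in preparation), so there is nothing to compare your proposal against. They do, however, drop a hint that is worth taking seriously: ``The main idea behind our proofs of Theorems~\ref{matroid Z2n}, \ref{matroid cyclic} and \ref{Z6 uniqueness} is to use the fact that unlike fields, groups can have endomorphisms which are neither trivial nor automorphisms,'' and the remark following Theorem~\ref{Z4 uniqueness} points to Oxley's treatment of unique representability of regular matroids. Your Smith-normal-form plan is in a plausible direction, but it seems to be a different mechanism from the one the authors advertise.

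On the merits of the proposal itself, the ``regular $\Rightarrow$ $\Z/n\Z$-representable'' direction is sound: lift a totally unimodular realization to $\Z/n\Z$ and use that every square submatrix has determinant in $\{0,\pm1\}$, so Smith normal form gives $|\H_S| = n^{\rk(A_S)}$ for every $S$. The converse is where the gaps lie, and they are not cosmetic.

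First, the coprime decomposition step cannot absorb the case $n=2^a$ with $a\ge 2$. You notice this, but the suggested workaround $n = 2\cdot 2^{a-1}$ does not give a CRT splitting, since the factors are not coprime. So for $n=2^a$ the entire plan collapses to ``reduce mod $2$,'' which at best yields binary representability -- strictly weaker than regularity (the Fano matroid is binary but not regular). The needed input is precisely ``$\Z/2^a\Z$-representable, $a\ge 2$, implies regular,'' which is essentially the content of the theorem in this case; the decomposition does not reduce the essential difficulty for pure powers of two. Second, the intermediate claim in your prime-power reduction is wrong as stated: if $r_{\H^{(p)}}(\{x\}) \le 1$ with $r$ integer-valued, then $|\H^{(p)}_{\{x\}}| \in \{1, p^a\}$, i.e.\ the coordinate image is trivial or cyclic of order $p^a$, not of order $p$. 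Repairing this requires an argument that each $\H^{(p)}_S$ is a \emph{free} $\Z/p^a\Z$-module (forced by $|\H^{(p)}_S| = p^{a\,r(S)}$ together with the singleton constraint propagated via submodularity); only then is reduction mod $p$ rank-preserving. Third, the claim that $r(S) = \log_n|\H^{(p)}_S| + \log_n|\H^{(q)}_S|$ being a matroid rank forces each summand to be a (matroid) rank function realizing $M$ is true, but the honest reason is an irrationality argument on $\log_q p$, not ``monotonicity/submodularity''; the paper explicitly warns, right before the relevant structure theorem, that the two factor set-functions $r_1$, $r_2$ need not be polymatroids in general, so this step deserves care. In short: the easy direction is fine, but the converse -- especially the prime-power bottleneck you correctly flag -- is left unproved and is not rescued by the proposed CRT split.
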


\begin{thm}  \label{matroid cyclic}
  If $n=ml,~m$ and $l$ relatively prime, then a matroid $M$ is representable over $\Z/n\Z$ if and only if it is representable over $\Z/m\Z$ and $\Z/ l\Z.$ 
\end{thm}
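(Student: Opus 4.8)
The plan is to reduce everything to the Chinese Remainder Theorem. Since $\gcd(m,l)=1$ we have $\Z/n\Z \cong \Z/m\Z \times \Z/l\Z$, hence for any finite set $E$ a group isomorphism $(\Z/n\Z)^E \cong (\Z/m\Z)^E \times (\Z/l\Z)^E$ under which the projection $\pr_S$ onto the coordinates indexed by $S \subseteq E$ corresponds to the product of the analogous projections on the two factors. (Here $m,l \ge 2$, since $b=1$ is not an allowed base.) The one structural fact I will need is: if $A,B$ are finite abelian groups with $\gcd(|A|,|B|)=1$, then every subgroup $\H \le A\times B$ splits as $\H = (\H\cap A)\times(\H\cap B)$. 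This is immediate from B\'ezout: for $h=(a,b)\in\H$ the orders $|a|,|b|$ are coprime, so picking $v$ with $v|b|\equiv 1\pmod{|a|}$ gives $h^{v|b|}=(a,e)\in\H$, and then $(e,b)=h\cdot(a,e)^{-1}\in\H$. Applied with $A=(\Z/m\Z)^E$ and $B=(\Z/l\Z)^E$ (whose orders $m^{|E|}$ and $l^{|E|}$ are coprime), any $\H\le(\Z/n\Z)^E$ decomposes as $\H=\H'\times\H''$ with $\H'\le(\Z/m\Z)^E$ and $\H''\le(\Z/l\Z)^E$; and since $\pr_S$ respects the decomposition, $\H_S=\H'_S\times\H''_S$, whence $|\H_S|=|\H'_S|\cdot|\H''_S|$ for every $S\subseteq E$.

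For the forward implication, let $E$ be the ground set of $M$ and, after relabeling, let $\H\le(\Z/n\Z)^E$ be a realization of $M$, so $r_{\H,n}=r_M$. Write $\H=\H'\times\H''$ as above. For each $S\subseteq E$, the order $|\H'_S|$ divides $m^{|S|}$ and $|\H''_S|$ divides $l^{|S|}$, so $|\H'_S|$ has all its prime factors among those of $m$ and $|\H''_S|$ among those of $l$; in particular these two numbers are coprime. From $|\H'_S|\cdot|\H''_S|=|\H_S|=n^{r_M(S)}=m^{r_M(S)}\,l^{r_M(S)}$ and unique factorization we conclude $|\H'_S|=m^{r_M(S)}$ and $|\H''_S|=l^{r_M(S)}$. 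Thus $r_{\H',m}=r_M=r_{\H'',l}$, so $\H'$ is a $\Z/m\Z$-realization and $\H''$ a $\Z/l\Z$-realization of $M$. This is the only place the matroid (i.e.\ integrality) hypothesis enters, and it is essential: for a non-integral polymatroid the identity $|\H_S|=|\H'_S|\cdot|\H''_S|$ need not force the two exponents to coincide.

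For the converse, suppose $M$ is representable over $\Z/m\Z$ and over $\Z/l\Z$. Transporting the two realizations along their isomorphisms with $M$, we may take $\H'\le(\Z/m\Z)^E$ with $r_{\H',m}=r_M$ and $\H''\le(\Z/l\Z)^E$ with $r_{\H'',l}=r_M$, where $E$ is the ground set of $M$. Set $\H=\H'\times\H''$, viewed as a subgroup of $(\Z/m\Z)^E\times(\Z/l\Z)^E\cong(\Z/n\Z)^E$. Then for each $S\subseteq E$, $|\H_S|=|\H'_S|\cdot|\H''_S|=m^{r_M(S)}\,l^{r_M(S)}=n^{r_M(S)}$, so $r_{\H,n}=r_M$ and $\H$ realizes $M$ over $\Z/n\Z$.

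I do not anticipate a real obstacle: the content is the CRT together with the coprime-order splitting of subgroups of a direct product, plus the bookkeeping that the matroid hypothesis converts ``$|\H_S|$ factors as an $m$-number times an $l$-number'' into the exact equalities $|\H'_S|=m^{r_M(S)}$ and $|\H''_S|=l^{r_M(S)}$. The only step meriting any care is verifying the compatibility of the splitting $\H=\H'\times\H''$ with the projections $\pr_S$, so that projection cardinalities multiply; everything else is routine.
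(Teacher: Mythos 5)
The paper does not actually prove this theorem; it is one of the two results explicitly deferred to the forthcoming reference \cite{We2}, so there is no in-paper argument for you to be measured against. On its own terms your argument is correct: the CRT identification $(\Z/n\Z)^E \cong (\Z/m\Z)^E \times (\Z/l\Z)^E$, the B\'ezout splitting $\H = \H' \times \H''$ of any subgroup into its coprime-order parts, the compatibility of that splitting with each coordinate projection $\pr_S$ (giving $|\H_S| = |\H'_S|\,|\H''_S|$), and the use of unique factorization together with the integrality of $r_M(S)$ to extract $|\H'_S| = m^{r_M(S)}$ and $|\H''_S| = l^{r_M(S)}$ all check out, and the converse is the obvious product construction. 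Your remark that the integrality hypothesis is load-bearing is exactly right, and it is the same phenomenon that forces the paper's more general polymatroid analogue (the final, unnumbered theorem of Section~\ref{Polymatroid representability}) to impose the additional hypothesis that $(E,r_1)$ and $(E,r_2)$ themselves be polymatroids. The paper does state that the common idea behind Theorems~\ref{matroid Z2n}, \ref{matroid cyclic} and~\ref{Z6 uniqueness} is the availability of group endomorphisms which are neither trivial nor automorphisms; your B\'ezout step, which applies the proper endomorphism $x \mapsto x^{v|b|}$ to project onto one coprime factor, is precisely such an endomorphism, so your proof is almost certainly in the spirit of the one intended in \cite{We2}, though that cannot be verified from the present text.
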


\noindent The following two theorems follow immediately from  Theorem \ref{matroid cyclic} and the main results of \cite{Whitt2}.

\begin{thm}
A matroid $M$ is representable over $\Z/3p\Z,~p$ an odd prime congruent to $2 \mod 3,$ if and only if $M$ is realized by a rational-valued matroid all of whose nonzero subdeterminants are of the form $\pm 2^j, j \in \Z.$
\end{thm}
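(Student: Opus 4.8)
The plan is to derive the statement from Theorem \ref{matroid cyclic} together with Whittle's classification of the matroids representable over $\Z/3\Z$ and one other field \cite{Whitt2}. First I would record the arithmetic: since $p\equiv 2\pmod{3}$ we have $p\ne 3$, and since $p$ is odd, $p\ne 2$; hence $\gcd(3,p)=1$ and $3p$ factors as $3\cdot p$ into relatively prime parts. Theorem \ref{matroid cyclic}, applied with $m=3$ and $l=p$, then gives that $M$ is representable over $\Z/3p\Z$ if and only if $M$ is representable over $\Z/3\Z$ and over $\Z/p\Z$. As $3$ and $p$ are primes, the remark following the definition of group representability in Section \ref{Representability} (cf.\ Example \ref{Z/pZ}) lets me replace ``representable over the group $\Z/q\Z$'' by ``representable over the field $\Z/q\Z$'' for $q\in\{3,p\}$. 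So what remains is to show that $M$ is representable over the fields $\Z/3\Z$ and $\Z/p\Z$ if and only if $M$ is realized by a rational matrix all of whose nonzero subdeterminants lie in $\{\pm 2^j:j\in\Z\}$ --- that is, if and only if $M$ is dyadic.

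Next I would invoke the main results of \cite{Whitt2}: among the matroids representable over $\Z/3\Z$, those also representable over an odd prime field $\Z/p\Z$ with $p\equiv 2\pmod{3}$ are precisely the dyadic ones. The hypothesis $p\equiv 2\pmod{3}$ is exactly what is needed here, since it forces $\Z/p\Z$ to contain no primitive cube root of unity; this is what excludes the sixth-root-of-unity matroids and isolates the dyadic partial field (the complementary congruence class $p\equiv 1\pmod{3}$ being the subject of the companion theorem). Composing this equivalence with the reduction of the first paragraph completes the proof.

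The one point that needs attention is matching the hypotheses to the precise form of \cite{Whitt2}: one should check that the dyadic conclusion holds for \emph{every} odd prime $p\equiv 2\pmod{3}$, not merely for $p=5$ --- the implication ``dyadic $\Rightarrow$ representable over $\Z/3\Z$ and $\Z/p\Z$'' is automatic, since dyadic matroids are representable over all odd prime fields, so the substantive direction is the converse, which is what Whittle's excluded-minor/partial-field analysis supplies --- and that the classification, customarily phrased for $3$-connected matroids, extends to all matroids, using that both field representability and the dyadic property are preserved under the direct sums and $2$-sums of the usual connectivity reduction. With these routine checks in place the theorem follows at once, which is why it can be recorded without a separate proof.
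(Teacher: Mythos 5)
Your proposal is correct and is exactly the paper's (implicit) argument: the paper states that this theorem ``follows immediately from Theorem \ref{matroid cyclic} and the main results of \cite{Whitt2},'' which is precisely the reduction to representability over the prime fields $\Z/3\Z$ and $\Z/p\Z$ followed by Whittle's classification that you carry out. Your added remarks on the coprimality check, the easy direction for dyadic matroids, and the extension from $3$-connected matroids are the routine details the paper leaves unstated.
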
 

\begin{thm}
A matroid $M$ is representable over $\Z/3p\Z,~p$ a prime congruent to $1 \mod 3,$ if and only if $M$ is realized by a complex-valued matroid all of whose nonzero subdeterminants are  sixth-roots of unity. 
\end{thm}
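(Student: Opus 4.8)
The plan is to strip off the modulus with Theorem~\ref{matroid cyclic} and then appeal to Whittle's classification of matroids representable over $\Z/3\Z$ together with a second field. Since $p\equiv 1\pmod 3$ we have $p\neq 3$, so $\gcd(3,p)=1$ and $3p=3\cdot p$ is a product of coprime integers. Theorem~\ref{matroid cyclic} then says that $M$ is representable over $\Z/3p\Z$ if and only if $M$ is representable over $\Z/3\Z$ and over $\Z/p\Z$. For a prime $q$, Example~\ref{Z/pZ} shows that a matroid is representable over the cyclic group $\Z/q\Z$ exactly when it is representable over the field $\Z/q\Z$ in the classical matroid sense. So it suffices to prove: for a prime $p$ with $p\equiv 1\pmod 3$, a matroid $M$ is representable over the fields $\Z/3\Z$ and $\Z/p\Z$ if and only if $M$ has a complex realization all of whose nonzero subdeterminants are sixth roots of unity.

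The arithmetic input is that a prime $p$ with $p\equiv 1\pmod 3$ is automatically odd, hence $p\equiv 1\pmod 6$, so $6\mid p-1$ and the field $\Z/p\Z$ contains a primitive sixth root of unity $\zeta$; equivalently $x^2-x+1$ splits over $\Z/p\Z$. Then the ring homomorphism $\Z[\zeta_6]\to\Z/p\Z$ sending $\zeta_6\mapsto\zeta$ carries every sixth root of unity to a unit, so applying it entrywise to a matrix over $\Z[\zeta_6]$ whose nonzero subdeterminants are sixth roots of unity yields a matrix over $\Z/p\Z$ with exactly the same nonzero subdeterminants; hence a sixth-root-of-unity matroid is representable over $\Z/p\Z$. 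It is also representable over $\Z/3\Z$, via the homomorphism $\Z[\zeta_6]\to\Z/3\Z$ sending $\zeta_6\mapsto -1$ (note $(-1)^2-(-1)+1=3\equiv 0$). Together with the reduction above this already gives the ``if'' direction: a sixth-root-of-unity matroid is representable over $\Z/3\Z$ and $\Z/p\Z$, hence over $\Z/3p\Z$.

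For the ``only if'' direction the plan is to invoke Whittle's classification \cite{Whitt2} of matroids representable over $\Z/3\Z$ and at least one further field: every such matroid is representable over one of a short list of partial fields, and the only partial fields on that list admitting a homomorphism into a field that contains a primitive sixth root of unity are contained in the sixth-root-of-unity partial field. Hence a ternary matroid that is in addition representable over $\Z/p\Z$ with $p\equiv 1\pmod 6$ must be a sixth-root-of-unity matroid; the regular and near-regular matroids, which occur as degenerate cases on the list, are themselves sixth-root-of-unity matroids, so no case is lost. This last step is where the real content lies: one has to quote Whittle's theorem in precisely the form that isolates the sixth-root-of-unity class for this pair of fields, and to check that the statement's description---a complex matrix all of whose nonzero subdeterminants are sixth roots of unity---coincides with representability over that partial field. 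Everything else is bookkeeping with Theorem~\ref{matroid cyclic} and the prime-field instance of Example~\ref{Z/pZ}, exactly parallel to the argument for the preceding theorem with $p\equiv 2\pmod 3$ and the dyadic partial field.
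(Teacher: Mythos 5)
Your proposal is correct and follows the same route the paper intends: decompose $\Z/3p\Z$ via Theorem~\ref{matroid cyclic}, translate prime-cyclic-group representability to field representability using Example~\ref{Z/pZ}, and then quote Whittle's classification from \cite{Whitt2}. The paper gives no explicit proof (it simply says the result follows immediately), so your added details — the observation that $p\equiv 1\pmod 3$ forces $p\equiv 1\pmod 6$, and the explicit homomorphisms $\Z[\zeta_6]\to\Z/p\Z$ and $\Z[\zeta_6]\to\Z/3\Z$ handling the ``if'' direction — are a useful supplement rather than a different approach.
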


\section{Polymatroid representability} \label{Polymatroid representability}

As in the last section, $\Gamma$ is a fixed finite group and $\G = \Gamma^n.$   Several of the proofs and  further discussion will appear in \cite{We2}.

Does it make sense to ask if $\Gamma$-representable polymatroids have a finite list of excluded minors?  One trivial answer is `no'.  Any polymatroid with exactly one element whose rank is a positive real number $s$ less than one is a minimal excluded minor for $\Gamma$-representability whenever $s^{|\Gamma|}$ is not an integer which divides $|\Gamma|.$ However, this does not capture the same point of view as matroids, even when $\Gamma = \Z/p\Z$ and $\Gamma$-representability over polymatroids is the same as $\Gamma$-representability for matroids.  One possible approach of describing polymatroid representability over $\Gamma$ in a way that does mimic matroid representability is the following.  

\begin{defn}
 A polymatroid $P=(E,r)$ is {\bf $\Gamma$-\possible} if for all $S  \subseteq E,~|\Gamma|^{r(S)}$ is an integer which divides $|\Gamma|^{|S|}.$  
\end{defn}

\begin{rmk}
Stronger requirements could reasonably be imposed on the definition of $\Gamma$-possible polymatroids.  For instance, since $\CC(\Gamma)$ is closed under contraction, if $P \in \CC(\Gamma),$ then for all $S \subseteq T,~|\Gamma|^{r(T-S)}$ divides  $|\Gamma|^{|T-S|}.$
\end{rmk}

  By restricting our attention to $\Gamma$-\possible polymatroids it is now possible for there to be a finite list of excluded minors for $\Gamma.$ Matroids are $\Gamma$-\possible   for any $\Gamma.$ More importantly, if $P$ is $\Gamma$-representable, then $P$ is $\Gamma$-\possiblee.  If $|\Gamma|$ is a prime power $p^k,$ then the class of polymatroids which are $\Gamma$-\possible is the class of all subcardinal polymatroids $P$ such that all ranks are rationals of the form $j/k,~j \in \Z_{\ge 0}.$  In this case, $P$ is $\Gamma$-\possible if and only if  $kP = (E, kr)$ is a $k$-polymatroid.   The particular case of representability over  an elementary abelian $p$-{\em group}, $\Gamma = (\Z/p\Z)^k,$ has already been considered under the guise of $k$-polymatroid representability over the {\em field} $\Z/p\Z.$ 

\begin{prop}
Let $p$ be a prime.  Then a $k$-polymatroid $P$ is representable over the {\emph{field}} $(\Z/p\Z)$ if and only if $(1/k)P$ is representable over the {\emph{group}} $(\Z/p\Z)^k.$
\end{prop}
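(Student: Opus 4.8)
The plan is to unpack both sides of the equivalence into concrete matrix/subgroup data and match them coordinate block by coordinate block. Recall that a $k$-polymatroid $P=(E,r)$ is representable over the \emph{field} $\Z/p\Z$ means there is a matroid $M[A]$ for some matrix $A$ over $\Z/p\Z$ and a multiset $\{T_1,\dots,T_n\}$ of subsets of the columns of $A$ such that $r(S)$ equals the $M[A]$-rank of $\bigcup_{i\in S} T_i$; by the subspace formulation we may take $A$ to have exactly $m=\sum_i r(\{i\})\le kn$ rows and the columns partitioned into blocks $T_i$ of size $k$ (padding with zero columns if some $r(\{i\})<k$; this does not change any rank). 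So a field-realization of $P$ is really a subspace $W\le (\Z/p\Z)^{kn}$, together with the fixed partition of the $kn$ coordinates into $n$ consecutive blocks of size $k$, and $r(S)=\dim \pr_{\cup_{i\in S} T_i}(W)$. On the other side, a group-realization of $(1/k)P$ over $\Gamma=(\Z/p\Z)^k$ is a subgroup $\H\le \Gamma^n=((\Z/p\Z)^k)^n$ with $(1/k)r(S)=\log_{|\Gamma|}|\H_S|=\log_{p^k}|\H_S|$, i.e. $r(S)=\log_p|\H_S|$.

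First I would observe that there is a canonical identification of the ambient groups: $\Gamma^n=((\Z/p\Z)^k)^n\cong (\Z/p\Z)^{kn}$, and under this identification the projection $\pr_S:\Gamma^n\to\Gamma^S$ (used to define $P(\H)$) is exactly the coordinate projection $(\Z/p\Z)^{kn}\to(\Z/p\Z)^{k|S|}$ onto the union of blocks $\bigcup_{i\in S}T_i$. A subgroup $\H\le\Gamma^n$ is the same thing as a $\Z/p\Z$-subspace $W\le(\Z/p\Z)^{kn}$ (additive subgroups of an elementary abelian $p$-group are precisely $\F_p$-subspaces), and since $\Gamma$ is abelian there is no obstruction coming from non-commuting coordinates. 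Because $\Field=\Z/p\Z$ is finite, $\log_p|W_S|=\dim_{\F_p} W_S$ where $W_S=\pr_{\cup_{i\in S}T_i}(W)$, exactly as recalled in the introduction (the ``CT-scan'' formula $r_A(S)=\log_{|\Field|}|\pr_S(W)|$).

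Putting these together: given a field-realization $W$ of $P$ with block structure $\{T_i\}$, set $\H:=W$ viewed as a subgroup of $\Gamma^n$ via the block identification. Then for every $S\subseteq E$,
\[
\log_{|\Gamma|}|\H_S| \;=\; \frac{1}{k}\log_p|\H_S| \;=\; \frac{1}{k}\dim_{\F_p}\pr_{\cup_{i\in S}T_i}(W)\;=\;\frac{1}{k}r(S),
\]
so $P(\H)=(1/k)P$ and $(1/k)P$ is $\Gamma$-representable. Conversely, given a group-realization $\H\le\Gamma^n$ of $(1/k)P$, let $W\le(\Z/p\Z)^{kn}$ be the corresponding subspace under the block identification, and let $A$ be a matrix whose rows span $W$, with the columns partitioned into the $n$ blocks $T_i$ of size $k$. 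The same chain of equalities, read backwards, gives that the $M[A]$-rank of $\bigcup_{i\in S}T_i$ equals $r(S)$, i.e. $P$ is constructed from $M[A]$ and hence representable over the field $\Z/p\Z$ as a $k$-polymatroid. (One should note $P$ is indeed a $k$-polymatroid in this situation: since $\H\le\Gamma^n$, each $|\H_{\{i\}}|$ divides $|\Gamma|=p^k$, so $r(\{i\})=\log_p|\H_{\{i\}}|\in\{0,1,\dots,k\}$, and monotonicity plus submodularity from Theorem~\ref{the polymatroid} give integrality and the bound $r(\{i\})\le k$ on singletons; more directly, a matrix realization over a field always produces an integer polymatroid.)

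I do not expect a genuine obstacle here — the statement is essentially a bookkeeping identity, and the only thing that requires a word of care is making the block-size conventions line up: one must allow zero columns so that every block $T_i$ has size exactly $k$, and one must check this padding changes neither the matroid rank function restricted to unions of blocks nor, on the group side, the cardinalities $|\H_S|$ (it does not, since a zero coordinate contributes a trivial factor). If anything is ``the hard part,'' it is simply stating precisely the dictionary $\Gamma^n\leftrightarrow(\Z/p\Z)^{kn}$ and confirming that $\pr_S$ on the group side corresponds to the columns $\bigcup_{i\in S}T_i$ on the field side; once that is set up, both directions are the single display above run in opposite directions.
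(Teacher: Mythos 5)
Your proof is correct. The paper itself does not include an argument for this proposition (Section~\ref{Polymatroid representability} notes that several of its proofs are deferred to the reference \cite{We2} in preparation), so there is no proof to compare against directly, but the dictionary you set up --- the block identification $\Gamma^n \cong (\Z/p\Z)^{kn}$, the fact that subgroups of the elementary abelian group $(\Z/p\Z)^{kn}$ are exactly its $\F_p$-subspaces, the matching of $\pr_S$ with projection onto $\bigcup_{i\in S}T_i$, and the computation $\log_{p^k}|\H_S| = \tfrac1k\dim_{\F_p}\pr_{\cup_{i\in S}T_i}(W)$ --- is precisely the natural argument and is complete. Two small points worth tidying: (i) you write that $A$ has $m=\sum_i r(\{i\})$ \emph{rows}, but in the paper's subspace convention $m$ counts the \emph{columns} (equivalently, $W\le\Field^m$ and the rows of $A$ form a basis of $W$); the padding-to-$kn$-columns step is fine and unaffected, and (ii) the closing parenthetical verifying that $P$ is a $k$-polymatroid is unnecessary since the proposition already hypothesizes it, though the observation is harmless and in fact shows the converse produces an integer-valued, cardinality-$\le k$ rank function as expected.
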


In \cite{OSW} Oxley, Semple and Whittle prove that when $k \ge 2$ there are infinitely many excluded minors for $k$-polymatroids over any prime field $\Z/p\Z.$  Combining this with the previous proposition gives us the following result. 

\begin{thm} \label{infinitely many}
  Let $p$ be a prime and $k \ge 2.$ Then $(\Z/p\Z)^k$ has infinitely many excluded minors among $(\Z/p\Z)^k$-\possible polymatroids. 
\end{thm}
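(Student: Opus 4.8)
The proof of Theorem~\ref{infinitely many} will be a direct application of the correspondence between $k$-polymatroid representability over the field $\Z/p\Z$ and $(\Z/p\Z)^k$-group representability of the rescaled polymatroid, combined with the Oxley--Semple--Whittle result on infinitely many excluded minors for $k$-polymatroids over prime fields. The plan is first to set up the dictionary: for a $\Gamma$-\possible polymatroid $P$ with $\Gamma = (\Z/p\Z)^k$, all ranks are of the form $j/k$ with $j \in \Z_{\ge 0}$, and the map $P \mapsto kP = (E, kr)$ is a bijection between $(\Z/p\Z)^k$-\possible polymatroids and $k$-polymatroids (this is exactly the parenthetical observation preceding the Proposition). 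Under the preceding Proposition, $P$ is $(\Z/p\Z)^k$-representable as a \emph{group} if and only if $kP$ is representable over the \emph{field} $\Z/p\Z$ as a $k$-polymatroid.

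The key step is then to check that this bijection $P \leftrightarrow kP$ is an isomorphism of the minor posets: deletion commutes with rescaling on the nose, since $(kP)\setminus S = k(P\setminus S)$ from the definition of deletion (restriction of the rank function), and contraction commutes with rescaling because $r_{kP/S}(T) = kr(S\cup T) - kr(S) = k\,r_{P/S}(T)$, so $(kP)/S = k(P/S)$. Hence $Q$ is a minor of $P$ (within the class of $(\Z/p\Z)^k$-\possible polymatroids) if and only if $kQ$ is a minor of $kP$ (within the class of $k$-polymatroids). Combining the last two observations: $P$ is an excluded minor for $(\Z/p\Z)^k$-representability among $(\Z/p\Z)^k$-\possible polymatroids if and only if $kP$ is an excluded minor for field-$\Z/p\Z$ representability among $k$-polymatroids. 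Since by \cite{OSW} the latter set is infinite when $k \ge 2$, and $P \mapsto kP$ is injective, the former set is infinite as well, which is the claim.

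I expect the only real subtlety to be bookkeeping: one must be careful that the classes over which ``excluded minor'' is being taken match up correctly under rescaling, i.e.\ that every $k$-polymatroid arises as $kP$ for a (unique) $(\Z/p\Z)^k$-\possible polymatroid $P$, and that the excluded-minor condition ``$P \notin \CC$ but every proper minor of $P$ is in $\CC$'' is quantified over the ambient class consistently on both sides. All of that is contained in the definitions and the Proposition already stated, so no new ideas are needed; the argument is essentially a transport-of-structure along the rescaling bijection. No step is a genuine obstacle beyond this verification, which is routine.
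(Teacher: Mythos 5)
Your argument is correct and matches the paper's intended route: the paper simply says the theorem follows by combining the Oxley--Semple--Whittle result with the preceding proposition, and you have filled in the transport-of-structure details (the rescaling bijection $P \mapsto kP$, its compatibility with deletion and contraction, and the resulting correspondence of excluded minors) accurately. One small point worth being explicit about: you should note that $(1/k)Q$ is $(\Z/p\Z)^k$-\possible for every $k$-polymatroid $Q$ (so the map is onto, not merely injective), which is what guarantees the ambient classes match; you gesture at this and it is easy to verify, but it is the one piece of "bookkeeping" that actually needs checking.
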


The classical notion of equivalence of realizations of a matroid over a field can be extended to realizations of polymatroids over finite groups.  Two  matrices  $A$ and $B$ with coefficients in a field $\Field$ are {\bf equivalent} realizations of a matroid $M$ if $M \cong M[A] \cong M[B]$ and $A$ can be obtained from $B$ by a sequence of one or more of the following operations. 

\begin{itemize}
 \item A permutation of the columns of the matrix.
 \item Multiplying a column by a nonzero scalar.
 \item An elementary row operation. (Including insertion or removal of a row of zeros.)
 \item Applying a field automorphism $\theta: \Field \to \Field$ to every entry of the matrix. 
\end{itemize}

Since elementary row operations do not change row spaces,  equivalence of realizations of a matroid by subspaces can be described as follows.  Two subspaces $U$ and $V$ of $\Field^E$ are equivalent realizations of a matroid $M$ if $M \cong M[U] \cong M[V]$ and $U$ can be obtained from $V$ by a sequence of one or more of the following operations.

\begin{itemize}
\item A permutation of the coordinates.
\item Fix $x \in E$ and multiply every $x$-coordinate of $v \in V$ by a fixed nonzero scalar $c_x \in \Field.$
\item Apply a fixed field automorphism $\theta: \Field \to \Field$ to all coordinates of all $v \in V.$  
\end{itemize} 

Our approach to equivalence of realizations of polymatroids by groups can be reduced to two operations.  Suppose $\H$ and $\H'$ are two realizations of $P.$ Then $\H$ and $\H'$ are equivalent realizations if $\H'$ can be obtained from $\H$  by a sequence of the following two operations.

\begin{itemize}
\item A permutation of the coordinates.
\item Apply a group automorphism $\theta: \Gamma \to \Gamma$ to one coordinate.  
\end{itemize}

When $p$ is a prime there are no nontrivial field automorphisms of $\Z/p\Z$ and all group automorphisms of $\Z/p\Z$ are multiplication by a nonzero scalar.  Hence the definition of equivalent realizations of matroids realizable over $\Z/p\Z$ as a field and a group coincide.  One of the basic facts of matroid theory is that all subspace realizations of matroids over $\Z/2\Z$ are equivalent. The same holds for $\Z/3\Z.$ However,  for primes $p \ge 5,$ and some matroids $M$ representable over $\Z/p\Z,$ there are many nonequivalent realizations of $M.$ 

\begin{thm} \label{Z4 uniqueness}
If $\H$ and $\H'$ are realizations of a polymatroid $P$ over $\Z/4\Z,$ then $\H$ and $\H'$ are equivalent.
\end{thm}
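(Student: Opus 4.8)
The plan is to show that any realization $\H \le (\Z/4\Z)^E$ of a polymatroid $P$ can be brought into a normal form by a sequence of coordinate permutations and coordinate automorphisms, where the normal form is determined by $P$ alone. First I would record the structural constraints that $\H$ must satisfy. Write $q\colon \Z/4\Z \to \Z/2\Z$ for the reduction map and extend it coordinatewise to $q\colon (\Z/4\Z)^E \to (\Z/2\Z)^E$. The subgroup $2\H$ and the image $q(\H)$ together measure $\H$; in particular, for each $S \subseteq E$ the rank $r(S) = \log_2 |\H_S|$ splits as $r(S) = \tfrac12\dim_{\Z/2\Z} q(\H)_S + \tfrac12\dim_{\Z/2\Z}(2\H)_S$ after a little bookkeeping, since $\Z/4\Z$ has a two-step filtration. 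So the data of $P$ constrains both the binary matroid $M[q(\H)]$ and how $2\H$ sits inside $(2\Z/4\Z)^E \cong (\Z/2\Z)^E$. The key auxiliary fact I would establish is that $M[q(\H)]$ is determined by $P$ (it should be forced to be binary-representable in essentially a unique way, using that $\Z/2\Z$-realizations of matroids are unique up to equivalence), and likewise the submatroid structure governing $2\H$.

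The heart of the argument is a coordinate-by-coordinate synthesis. Having fixed a basis-type structure via $q(\H)$, I would choose a spanning set of elements of $\H$ adapted to a basis $B$ of the binary matroid: for each basis element pick a lift whose support on $B$ is a single coordinate. This is where the two allowed operations earn their keep — applying the automorphism $x \mapsto -x = 3x$ of $\Z/4\Z$ in a single coordinate lets me flip a chosen nonzero entry between $1$ and $3$, which is exactly enough freedom to normalize each ``pivot'' entry to $1$ and then to kill the ambiguity in off-pivot entries that are $0$ mod $2$. The point specific to $\Z/4\Z$ (as opposed to $\Z/p\Z$ for $p \ge 5$, where uniqueness fails) is that the unit group $(\Z/4\Z)^\times = \{1,3\}$ has order $2$, so once the binary picture is pinned down there is at most one sign's worth of choice per coordinate, and the submodularity/rank constraints of $P$ pin that down too. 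I would carry this out by induction on $|E|$, peeling off one coordinate at a time: restrict to $E \setminus \{x\}$, apply the inductive hypothesis to get $\H_{E\setminus\{x\}}$ and $\H'_{E\setminus\{x\}}$ equivalent, transport that equivalence, and then show the $x$-coordinate is forced up to the automorphism $x \mapsto -x$.

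The main obstacle I anticipate is the bookkeeping around elements of $\H$ that are $2$-torsion, i.e. lie in $2(\Z/4\Z)^E$: these contribute to $2\H$ but are invisible to the pivot-normalization since the automorphism $x \mapsto -x$ acts trivially on $\{0,2\}$. I would handle this by treating $2\H$ as a binary matroid in its own right, realized inside $(\Z/2\Z)^E$, and invoking uniqueness of binary realizations for it as well; the compatibility between the ``top layer'' $q(\H)$ and the ``bottom layer'' $2\H$ — namely that an element of $q(\H)$ lifting to $\H$ has a well-defined obstruction in $(2\Z/4\Z)^E/2\H$ — is what needs care, and is governed by a Goursat-type / extension-class computation. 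Once one checks that this extension class is itself determined by the rank function $r$ (because it is detected by comparing $r(S)$ with $\tfrac12(\dim q(\H)_S + \dim (2\H)_S)$ as $S$ varies), the whole realization is rigid and the two operations suffice to match $\H$ with $\H'$.
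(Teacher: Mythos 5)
The key computational step of your argument --- the asserted rank decomposition $r(S) = \tfrac{1}{2}\dim_{\Z/2\Z} q(\H)_S + \tfrac{1}{2}\dim_{\Z/2\Z}(2\H)_S$ --- is false. Write $\H_S \cong (\Z/4\Z)^a \times (\Z/2\Z)^c$ as abelian groups, so $\log_2|\H_S| = 2a+c$; but $q(\H)_S = q(\H_S)$ and $(2\H)_S = 2(\H_S)$ each have $\Z/2\Z$-dimension exactly $a$, so your right-hand side equals $a$, not $2a+c$. A concrete counterexample: take $\H = \langle(1,2)\rangle \le (\Z/4\Z)^2$ and $S=\{2\}$. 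Then $\H_S = \{0,2\}$ has two elements, while $q(\H)_S$ and $(2\H)_S$ are both trivial. The correct identity is $\log_2|\H_S| = \dim_{\Z/2\Z} T(\H_S) + \dim_{\Z/2\Z} q(\H_S)$, where $T(\H_S) = \{h \in \H_S : 2h = 0\}$ is the $2$-torsion of the \emph{projected} group; in the example $T(\H_{\{2\}})$ is one-dimensional even though $T(\H)_{\{2\}}$, the $\{2\}$-projection of the $2$-torsion of $\H$, is trivial. So $T(\H_S)$ is not the $S$-projection of any fixed subgroup of $\H$, and your two-layer plan --- reducing to binary realizations of $q(\H)$ and of $2\H$ (or of $T(\H)$) and then matching extension classes --- does not get off the ground as set up. Relatedly, your ``key auxiliary fact'' that $M[q(\H)]$ is determined up to isomorphism by $P$ alone is asserted, never argued, and it carries most of the content of the theorem; the broken rank formula was your intended bridge to it.

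The paper points to a different route: reduce to Oxley's treatment of the unique representability of regular matroids (compare Theorem~\ref{matroid Z2n}, which identifies $\Z/4\Z$-representable matroids with regular ones). That classical machinery already packages the passage from the binary reduction to the lift --- precisely the step your extension-class bookkeeping was attempting to reconstruct by hand --- and avoids the subtlety that the relevant ``second layer'' is not a projection of a single subgroup of $\H$.
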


\noindent A proof of Theorem \ref{Z4 uniqueness} can be based on the presentation in \cite{Ox} of the unique representability of regular matroids.

\begin{thm} \label{Z6 uniqueness}
If $\H$ and $\H'$ are realizations of a polymatroid $P$ over $\Z/6\Z,$ then $\H$ and $\H'$ are equivalent.
\end{thm}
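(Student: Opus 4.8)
The plan is to decompose everything along the Chinese Remainder isomorphism $\Z/6\Z \cong \Z/2\Z \times \Z/3\Z$ and then invoke the classical uniqueness of binary and ternary matroid representations. After a permutation of coordinates we may assume $\H, \H' \le (\Z/6\Z)^E$ with $r_\H = r_{\H'} = r$, the rank function of $P$. Since $2$ and $3$ are coprime, every subgroup of the finite abelian group $(\Z/6\Z)^E$ is the internal direct product of its $2$-primary and $3$-primary components, and these components of $(\Z/6\Z)^E$ are $(\Z/2\Z)^E$ and $(\Z/3\Z)^E$; thus $\H = \H_{(2)} \times \H_{(3)}$ with $\H_{(2)} \le (\Z/2\Z)^E$ and $\H_{(3)} \le (\Z/3\Z)^E$, and likewise $\H' = \H'_{(2)} \times \H'_{(3)}$. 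Each projection $\pr_S$ respects this decomposition, so $\pr_S(\H) = \pr_S(\H_{(2)}) \times \pr_S(\H_{(3)})$, and writing $a$, $b$ for the rank functions of the matroids $M[\H_{(2)}]$, $M[\H_{(3)}]$ (these are matroids by Example~\ref{Z/pZ}) we get
$$ r(S) \;=\; \log_6 |\H_S| \;=\; a(S)\log_6 2 + b(S)\log_6 3 \qquad\text{for all } S \subseteq E. $$

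Next I would observe that $r$ alone recovers $M[\H_{(2)}]$ and $M[\H_{(3)}]$. Since $\log 2$ and $\log 3$ are linearly independent over $\Q$, for each $S$ the displayed identity has a unique solution $(a(S), b(S))$ in nonnegative integers, so the real-valued $r$ determines the integer-valued $a$ and $b$, i.e.\ the rank functions of $M[\H_{(2)}]$ and $M[\H_{(3)}]$ on $E$. Applying this to $\H'$ as well and using $r_\H = r_{\H'}$ gives $M[\H_{(2)}] = M[\H'_{(2)}]$ and $M[\H_{(3)}] = M[\H'_{(3)}]$ as matroids on $E$. The uniqueness of binary representations (the basic fact recalled just before Theorem~\ref{Z4 uniqueness}) forces $\H_{(2)} = \H'_{(2)}$, while the uniqueness of ternary representations --- where the only nontrivial column operation is multiplication by $-1$ and $\Z/3\Z$ has no nontrivial automorphism --- gives a set $T \subseteq E$ such that $\H'_{(3)}$ is obtained from $\H_{(3)}$ by negating each coordinate in $T$.

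It remains to carry out this coordinatewise negation of the $3$-part, while fixing the $2$-part, using only the permitted operation of applying an automorphism of $\Z/6\Z$ to a single coordinate. The arithmetic cooperates: $\operatorname{Aut}(\Z/6\Z) = (\Z/6\Z)^\times = \{1,5\}$, and since $5 \equiv 1 \pmod 2$ and $5 \equiv -1 \pmod 3$, under $\Z/6\Z \cong \Z/2\Z \times \Z/3\Z$ the automorphism $x \mapsto 5x$ corresponds to $(u,v) \mapsto (u,-v)$. As every automorphism of $\Z/6\Z$ preserves the primary decomposition, applying $x \mapsto 5x$ to the $x$-coordinate of $\H = \H_{(2)} \times \H_{(3)}$ leaves $\H_{(2)}$ unchanged and negates the $x$-coordinate of $\H_{(3)}$; doing this for every $x \in T$ converts $\H$ into $\H'_{(2)} \times \H'_{(3)} = \H'$, so the two realizations are equivalent. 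The matroid-theoretic ingredients here are classical and were already used in the discussion preceding Theorem~\ref{Z4 uniqueness}, so the only thing demanding care is the routine verification that the primary decomposition is compatible both with the projections $\pr_S$ and with applying an automorphism to one coordinate; I expect no genuine obstacle.
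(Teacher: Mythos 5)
Your proof is correct, and it follows the strategy the paper itself signals without actually carrying out: the paper defers the proofs of Theorems~\ref{matroid Z2n}, \ref{matroid cyclic} and \ref{Z6 uniqueness} to the forthcoming \cite{We2}, but states that ``the main idea \dots is to use the fact that unlike fields, groups can have endomorphisms which are neither trivial nor automorphisms.'' For $\Z/6\Z$ the relevant such endomorphisms are $x\mapsto 2x$ and $x\mapsto 3x$, which project onto the $3$-primary and $2$-primary components --- exactly your Chinese-Remainder decomposition $\H = \H_{(2)}\times\H_{(3)}$. The paper's surrounding discussion (the construction of $r_1,r_2$ from $r$ for $\Gamma=\Gamma_1\times\Gamma_2$ with coprime orders, Theorem~6.10) is the same splitting of the rank function you obtain from unique factorization of $|\H_S|$, and your observation that $5\equiv 1\pmod 2$, $5\equiv -1\pmod 3$ is the right way to realize the ternary column scalings within $\mathrm{Aut}(\Z/6\Z)$.

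One small point worth being explicit about: you need the \emph{labelled} uniqueness of $GF(2)$- and $GF(3)$-representations, i.e.\ that if $M[W]=M[W']$ as matroids on the same ground set then $W=W'$ (binary) or $W'=WD$ for a diagonal $\pm1$-matrix $D$ (ternary). The sentence in the paper just before Theorem~\ref{Z4 uniqueness} only asserts equivalence in the sense that allows a coordinate permutation, which is a priori weaker; you need $\sigma$ to be the identity so that the permutation used for the $2$-part and the $3$-part do not conflict. This stronger, label-preserving form is standard (for binary it follows directly from the fact that $W$ is the span of the characteristic vectors of the cocircuits of $M[W]$; for ternary it is part of the Brylawski--Lucas unique representability theorem), so this is a citation issue rather than a gap, but it would be worth a sentence in a final write-up.
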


The main idea behind our proofs of Theorems \ref{matroid Z2n}, \ref{matroid cyclic} and \ref{Z6 uniqueness} is to use the fact that unlike fields, groups can have endomorphisms which are neither trivial nor automorphisms.  Here is an example where this idea is applied to polymatroid representability.

Suppose $\Gamma$ is the direct sum of two groups $\Gamma_1$ and $\Gamma_2$ whose orders are relatively prime, and $P$ is a $\Gamma$-\possible polymatroid.  Then for any $S \subseteq E,~|\Gamma^{r(S)}|$ can be written uniquely as a product $a(S)b(S),$ where $a(S)$ divides $|\Gamma_1|^{|S|}$ and $b(S)$ divides $|\Gamma_2|^{|S|}.$  Let $r_1$ and $r_2$ be the set functions defined by $r_1(S) = \log_{|\Gamma_1|} a(S)$ and $r_2(S) = \log_{|\Gamma_2|} b(S).$
Even if $P$ is $\Gamma$-\possible $(E,r_1)$ and/or $(E, r_2)$  may not be polymatroids.

\begin{thm}
With the above assumptions and notation, $P$ is representable over $\Gamma$ if and only if $(E,r_1)$ is a representable polymatroid over $\Gamma_1$ and $(E,r_2)$ is a representable polymatroid  over $\Gamma_2. $ 
\end{thm}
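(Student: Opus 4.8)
The plan is to exploit the coprimality of $|\Gamma_1|$ and $|\Gamma_2|$ in the strong form: a subgroup of $\Gamma^{E'} = (\Gamma_1 \oplus \Gamma_2)^{E'}$ splits as a direct product of a subgroup of $\Gamma_1^{E'}$ and a subgroup of $\Gamma_2^{E'}$, and this splitting is compatible with projections. First I would observe that $\Gamma^{E'} \cong \Gamma_1^{E'} \oplus \Gamma_2^{E'}$, and that since $\gcd(|\Gamma_1|,|\Gamma_2|)=1$, any subgroup $\H \le \Gamma_1^{E'} \oplus \Gamma_2^{E'}$ satisfies $\H = (\H \cap \Gamma_1^{E'}) \oplus (\H \cap \Gamma_2^{E'})$; call these summands $\H^{(1)}$ and $\H^{(2)}$. (This is the elementary fact that a subgroup of a direct product of two coprime-order groups is itself a direct product of subgroups, one in each factor — the image of $\H$ in $\Gamma_i^{E'}$ and $\H \cap \Gamma_i^{E'}$ coincide because the projection $\H \to \Gamma_i^{E'}$ has kernel of order coprime to its image, forcing a splitting.) Crucially, $\pr_S$ respects the direct sum decomposition coordinate-wise, so $\H_S = \H^{(1)}_S \oplus \H^{(2)}_S$ for every $S \subseteq E'$, and hence $|\H_S| = |\H^{(1)}_S| \cdot |\H^{(2)}_S|$ with $|\H^{(1)}_S|$ a power of... well, a divisor of $|\Gamma_1|^{|S|}$ and $|\H^{(2)}_S|$ a divisor of $|\Gamma_2|^{|S|}$. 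By uniqueness of the coprime factorization of $|\Gamma|^{r(S)} = |\H_S|$ as $a(S)b(S)$, we get $a(S) = |\H^{(1)}_S|$ and $b(S) = |\H^{(2)}_S|$.

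For the ``only if'' direction: suppose $P \cong P(\H)$ for some $\H \le \Gamma^{E'}$. Transport the isomorphism so we may as well assume $E = E'$ and $r = r_{\H}$. Form $\H^{(1)}, \H^{(2)}$ as above. By the identity $a(S) = |\H^{(1)}_S|$ we get $r_1(S) = \log_{|\Gamma_1|} a(S) = \log_{|\Gamma_1|} |\H^{(1)}_S| = r_{\H^{(1)}}(S)$, so $(E, r_1) = P(\H^{(1)})$ is a polymatroid representable over $\Gamma_1$, and symmetrically $(E,r_2) = P(\H^{(2)})$ is representable over $\Gamma_2$. For the ``if'' direction: given $\Gamma_1$-realization $\H^{(1)} \le \Gamma_1^{E}$ of $(E,r_1)$ and $\Gamma_2$-realization $\H^{(2)} \le \Gamma_2^{E}$ of $(E,r_2)$, set $\H = \H^{(1)} \oplus \H^{(2)} \le \Gamma_1^{E} \oplus \Gamma_2^{E} = \Gamma^{E}$. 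Then $|\H_S| = |\H^{(1)}_S| \cdot |\H^{(2)}_S| = |\Gamma_1|^{r_1(S)} |\Gamma_2|^{r_2(S)}$, and I claim this equals $|\Gamma|^{r(S)}$: indeed $|\Gamma_1|^{r_1(S)} = a(S)$ and $|\Gamma_2|^{r_2(S)} = b(S)$ by definition of $r_1,r_2$ (here one uses that $a(S),b(S)$ are genuine integers, which holds because $P$ is $\Gamma$-possible — this is where that hypothesis enters), and $a(S)b(S) = |\Gamma|^{r(S)}$ by construction. Hence $r_{\H}(S) = \log_{|\Gamma|} |\H_S| = r(S)$, so $\H$ realizes $P$ over $\Gamma$.

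The one point that needs genuine care — the main obstacle — is verifying the subgroup splitting $\H = (\H\cap\Gamma_1^{E'}) \oplus (\H \cap \Gamma_2^{E'})$ and, more importantly, its \emph{compatibility with all projections} $\pr_S$. The splitting at the level of $\H$ itself follows from coprimality (write the order of $\H$ as $|\H_1^{\mathrm{Sylow}}|$-part times $|\H_2|$-part, or more cleanly: the multiplication map $\H^{(1)} \times \H^{(2)} \to \H$ is injective with image of full order since $|\H|$ divides $|\Gamma_1|^{|E'|}|\Gamma_2|^{|E'|}$ and the two factors have coprime order). Compatibility with $\pr_S$ is then automatic because $\pr_S$ is the identity on those coordinates in $S$ and kills the rest, so it maps $\Gamma_i^{E'}$ into $\Gamma_i^{S}$ and commutes with the decomposition; thus $\pr_S(\H) = \pr_S(\H^{(1)}) \oplus \pr_S(\H^{(2)})$. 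I expect this to be a short lemma rather than a hard argument, but it is the load-bearing step and deserves to be stated explicitly. Everything else is bookkeeping with logarithms and the definitions of $r_1, r_2, a(S), b(S)$.
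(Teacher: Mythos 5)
Your proof is correct, but note that the paper does not actually supply an argument for this theorem: like several other statements in Section \ref{Polymatroid representability}, its proof is deferred to the forthcoming reference \cite{We2}, so there is no in-paper proof to compare against. What you wrote is complete and sound. The load-bearing lemma you correctly single out --- that $\H = (\H \cap \Gamma_1^{E'}) \times (\H \cap \Gamma_2^{E'})$ whenever $\gcd(|\Gamma_1|,|\Gamma_2|)=1$ --- is the standard coprimality fact: choose integers with $m|\Gamma_1^{E'}| + n|\Gamma_2^{E'}| = 1$; then for $h=(a,b)\in\H$ one has $h^{n|\Gamma_2^{E'}|} = (a,1)$ and $h^{m|\Gamma_1^{E'}|} = (1,b)$, both already in $\H$, and this works for nonabelian factors since the two blocks of the direct product commute. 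Compatibility with every $\pr_S$ is immediate because $\pr_S$ is a coordinatewise homomorphism carrying $\Gamma_i^{E'}$ into $\Gamma_i^{S}$, and the two images intersect trivially inside $\G_S$, giving $|\H_S| = |\H^{(1)}_S|\,|\H^{(2)}_S|$; uniqueness of the coprime factorization $|\Gamma|^{r(S)} = a(S)b(S)$ (which is well defined precisely because $P$ is $\Gamma$-possible) then forces $a(S)=|\H^{(1)}_S|$ and $b(S)=|\H^{(2)}_S|$, and both directions follow as you describe. It is also worth noticing that your splitting realizes exactly the theme the authors flag just before the statement: $\gamma \mapsto \gamma^{n|\Gamma_2|}$ is an endomorphism of $\Gamma$ that is neither trivial nor an automorphism when both factors are nontrivial, and it implements the projection $\Gamma \twoheadrightarrow \Gamma_1$; so although the paper omits the proof, your argument is almost certainly the intended one.
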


\section{Critical theorem for finite groups} \label{Crapo-Rota}

One of the cornerstones of enumerative   matroid theory is  Crapo and Rota's critical theorem.  The characteristic polynomial of a matroid occurs in applications with astonishing frequency and many times the reason can be traced back to the ideas behind this result.  Here we establish a finite group analog of the critical theorem.  When $\G$ is the additive group of a vector space and $\H$ is the additive group of a subspace of that vector space, Theorem \ref{group Crapo-Rota} below is equivalent to Crapo and Rota's original result.   

A forerunner of Theorem \ref{group Crapo-Rota} and its dual, Theorem \ref{Dual Crapo-Rota} below,   for {\em abelian} groups is in Section 6 of a paper by Kung, Murty and Rota \cite{KMR}.   Indeed, with enough translation from the R\'edei functions in \cite{KMR} to the rank functions here, one can view Theorem \ref{group Crapo-Rota} applied to abelian groups as a special case of \cite[Theorem 10]{KMR}.

The classical Crapo-Rota critical theorem is usually given in a coordinate-free form.   Let $E=\{v_1, \dots, v_n\}$ be vectors in a finite vector space $V$ over a (finite) field $\Field,~W$ the span of $E,$ and $s = \dim_\Field W.$   Then the Crapo-Rota critical theorem says that the number of $k$-tuples $(l_1, \dots, l_k) \in (V^\ast)^k$ such that $E \cap \ker l_1 \cap \cdots \cap \ker l_k  = \emptyset$ is $|\Field|^{k(n-s)} \mathlarger{\chi}_{M[W]}(|\Field|^k).$  It is a linear algebra exercise to recognize that this is equivalent to Theorem \ref{field Crapo-Rota}  which  
relates the zero sets of row vectors of a matrix $A$ whose coefficients are in a finite field $\Field,$ to the characteristic polynomial of the matroid $M[A].$

Let $A$ be an $m \times n$ matrix with row space $R(A).$ For $w = (w_1, \dots, w_n) \in R(A)$ define the zero set of $w$ to be $Z(w) = \{i:w_i =  0\}.$

\begin{thm} \label{field Crapo-Rota}  \cite{CR}
Let $A$ be an $m \times n$ matrix with coefficients in a finite field $\Field.$   Then 
\begin{equation} \label{Crapo-Rota field formula}
\mathlarger{\chi}_{M[A]}(|\Field|^k) = |\{(u_1, \dots, u_k) \in R(A)^k: Z(u_1) \cap \cdots \cap Z(u_k) = \emptyset\}|.
\end{equation}
\end{thm}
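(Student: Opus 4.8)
The plan is to prove this by inclusion-exclusion over the ground set $[n]$, exactly mirroring the combinatorial identity that yields the characteristic polynomial from the rank function. The starting observation is that for a $k$-tuple $(u_1,\dots,u_k)\in R(A)^k$, the condition $Z(u_1)\cap\cdots\cap Z(u_k)=\emptyset$ says precisely that there is \emph{no} coordinate $i\in[n]$ at which all $k$ vectors vanish simultaneously. So I would count the complementary quantity first: for a fixed subset $S\subseteq[n]$, the set of $k$-tuples all of whose members vanish on \emph{every} coordinate in $S$ is exactly $(R(A)\cap V_S)^k$, where $V_S=\{w\in\Field^n : w_i=0 \text{ for all } i\in S\}$ is the coordinate subspace supported off $S$. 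The key linear-algebra fact is that $R(A)\cap V_S$ is naturally identified with the kernel of the projection $\pr_{S}:R(A)\to\Field^{S}$ (restricting a row vector to its $S$-coordinates), so
\[
|R(A)\cap V_S| \;=\; \frac{|R(A)|}{|\pr_S(R(A))|} \;=\; \frac{|\Field|^{r_A(E)}}{|\Field|^{r_A(S)}} \;=\; |\Field|^{\,r_A(E)-r_A(S)},
\]
using that $r_A(S)=\log_{|\Field|}|\pr_S(R(A))|$ (the same CT-scan description of the matroid rank function noted in the introduction), and $r_A(E)=\dim R(A)$. Raising to the $k$-th power, the number of $k$-tuples vanishing on all of $S$ is $|\Field|^{k(r_A(E)-r_A(S))}$.

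Now I would run Möbius inversion / inclusion-exclusion on the boolean lattice $2^{[n]}$. For each $k$-tuple $\mathbf u=(u_1,\dots,u_k)$ let $Z(\mathbf u)=\bigcap_j Z(u_j)$; then grouping $k$-tuples by the value of $Z(\mathbf u)$ and using the elementary identity $\sum_{S\subseteq T}(-1)^{|S|}=0$ unless $T=\emptyset$, one gets
\[
\bigl|\{\mathbf u : Z(\mathbf u)=\emptyset\}\bigr|
=\sum_{S\subseteq[n]}(-1)^{|S|}\,\bigl|\{\mathbf u : S\subseteq Z(\mathbf u)\}\bigr|
=\sum_{S\subseteq[n]}(-1)^{|S|}\,|\Field|^{\,k(r_A(E)-r_A(S))}.
\]
The right-hand side is, by the definition of the characteristic polynomial $\chi_{M[A]}(t)=\sum_{S\subseteq E}(-1)^{|S|}t^{r_A(E)-r_A(S)}$, precisely $\chi_{M[A]}(|\Field|^k)$, which is the claimed formula.

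I do not expect a genuine obstacle here; the only point requiring a little care is the identification $R(A)\cap V_S \cong \ker\bigl(\pr_S|_{R(A)}\bigr)$ and the resulting cardinality count $|R(A)\cap V_S|=|\Field|^{r_A(E)-r_A(S)}$ — this is where finiteness of $\Field$ is used and where the matroid rank function enters, and it is essentially the $\Field$-vector-space shadow of equation~\eqref{one more} in the proof of Theorem~\ref{the polymatroid}. One should also be slightly careful that the inner counting is over ordered $k$-tuples (so everything is a $k$-th power, not a binomial coefficient) and that the case $S=\emptyset$ contributes $|\Field|^{k\cdot r_A(E)}\cdot 1$ correctly, matching the leading term $t^{r_A(E)}$ of $\chi_{M[A]}$. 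Everything else is the standard inclusion-exclusion bookkeeping, and the same argument will transplant verbatim to the group setting in Theorem~\ref{group Crapo-Rota} with $\ker\pr_S$ replacing the intersection with a coordinate subspace.
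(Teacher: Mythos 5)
Your proof is correct. The computation $|R(A)\cap V_S|=|\ker(\pr_S|_{R(A)})|=|\Field|^{r_A(E)-r_A(S)}$ is the right key identity, the inclusion--exclusion over $2^{[n]}$ is sound, and you correctly note that everything is an ordered-$k$-tuple count so the exponent $k$ factors through cleanly.

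Your route is genuinely different from the one the paper takes (the paper cites Crapo--Rota for this statement, but its own proof of the group analogue, Theorem~\ref{group Crapo-Rota}, is what to compare against). The paper works with M\"obius inversion on the lattice of flats $\L_P$: it first shows $I(h)$ is always a flat (Corollary~\ref{I=flat}), groups $k$-tuples by the flat $\bigcap_j I(h_j)$, and then invokes Proposition~\ref{mobius char poly} to convert the flat-indexed M\"obius sum into the characteristic polynomial; it also needs to dispose of the loop case separately up front, since otherwise $\emptyset$ is not a flat. You instead run inclusion--exclusion directly over the full Boolean lattice $2^E$, which matches the paper's stated definition $\mathlarger{\chi}_P(t)=\sum_{S\subseteq E}(-1)^{|S|}t^{r(E)-r(S)}$ with no translation step. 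The trade-off: your argument is shorter, self-contained, and absorbs the loop case automatically (if some coordinate is always zero on $R(A)$ then both sides vanish with no special handling), whereas the paper's flat-based version exposes more of the geometry (it is really the statement that the $I(h)$'s realize the coatoms and flats, which is reused elsewhere in Section~\ref{duality for realized polymatroids}). As you observe at the end, your Boolean-lattice argument also transplants verbatim to the group setting, so it would have served as an alternative proof of Theorem~\ref{group Crapo-Rota} as well.
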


For the remainder of this section we no longer assume that all the $\Gamma_x$ are isomorphic.  We will continue to suppress $b$ from the notation with the understanding that $b$ can be any fixed  real number greater than one.   To state the analog of  Theorem \ref{field Crapo-Rota} for groups and polymatroids we substitute $\H \le \G$ for the row space and  define an analog of $Z(w)$.   For $h \in \H$ let $I(h) = \{x \in E: h_x = 1_{\Gamma_x}.\}$  So,  if $A$ is a  matrix with coefficients in $\Field,~\H = R(A),$ and for all $x \in E,~\Gamma_x$ is the additive group of $\Field,$ then $I(h) = Z(h).$  
\begin{thm} \label{group Crapo-Rota} (Crapo-Rota for finite groups)
  Suppose $\H$ is a subgroup of $\G$ which realizes the polymatroid $P.$ Then 
  
 \begin{equation} \label{Crapo-Rota formula}
  \mathlarger{\chi}_P(b^k)=|\{(h_1, \dots, h_k) \in \H^k:  \displaystyle\bigcap^k_{j=1} I(h_j) = \emptyset\}|.
 \end{equation}
\end{thm}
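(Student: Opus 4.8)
The plan is to mimic the standard inclusion-exclusion proof of the field version (Theorem~\ref{field Crapo-Rota}), replacing the counting of zero sets of vectors with counting index sets $I(h)$ of group elements. Writing $r = r_{\H,b}$, expand the right-hand side of \eqref{Crapo-Rota formula} by inclusion-exclusion over the ``bad'' events. For $x \in E$ let $B_x = \{(h_1,\dots,h_k) \in \H^k : x \in I(h_j) \text{ for all } j\}$, i.e.\ the tuples for which coordinate $x$ is the identity in every $h_j$. Then the set being counted on the right of \eqref{Crapo-Rota formula} is $\H^k \setminus \bigcup_{x \in E} B_x$, and by inclusion-exclusion its cardinality is $\sum_{S \subseteq E} (-1)^{|S|} \left| \bigcap_{x \in S} B_x \right|$.

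The key computation is that $\bigcap_{x \in S} B_x = \{(h_1,\dots,h_k) : h_j \in N_S \text{ for all } j\}$ where $N_S = \{h \in \H : h_x = 1_{\Gamma_x} \text{ for all } x \in S\} = \ker(\pr_S : \H \to \H_S)$. Hence $\left|\bigcap_{x\in S} B_x\right| = |N_S|^k$. By the first isomorphism theorem $|N_S| = |\H|/|\H_S| = |\H_{\emptyset \cup \cdots}|$... more precisely $|N_S| = |\H| / |\H_S|$, and using $r(E) = \log_b |\H|$ (since $\pr_E$ is injective, so $\H_E \cong \H$) and $r(S) = \log_b |\H_S|$, we get $|N_S| = b^{\,r(E) - r(S)}$. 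Therefore
\begin{equation*}
\left|\left\{(h_1,\dots,h_k) \in \H^k : \bigcap_{j=1}^k I(h_j) = \emptyset\right\}\right| = \sum_{S \subseteq E} (-1)^{|S|}\, b^{\,k(r(E)-r(S))} = \sum_{S \subseteq E} (-1)^{|S|} (b^k)^{r(E)-r(S)},
\end{equation*}
which is exactly $\mathlarger{\chi}_P(b^k)$ by the definition of the characteristic polynomial of a polymatroid.

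I expect the only real subtlety to be the identification $|N_S| = b^{r(E)-r(S)}$, and more specifically making sure $r(E) = \log_b|\H|$: this holds because $\pr_E : \H \to \G_E = \G$ is the identity, so $\H_E = \H$ and $|\H_E| = |\H|$. Everything else is the routine inclusion-exclusion bookkeeping and the observation that the ``bad at every $j$'' condition decouples over the $k$ coordinates of the tuple, giving the $k$-th power. One should also note the edge case where $P$ has a loop $x$ (i.e.\ $r(\{x\})=0$, so $\Gamma_x$ is trivial and $x \in I(h)$ for every $h \in \H$): then every tuple is bad, the right-hand side is $0$, and indeed $\mathlarger{\chi}_P \equiv 0$ by Proposition~\ref{mobius char poly}, so the two sides agree. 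No nontrivial group theory beyond the first isomorphism theorem is needed; in particular, unlike the excluded-minor argument, non-commutativity of $\Gamma$ plays no role here, which is why the formula is ``virtually identical'' to the finite field case.
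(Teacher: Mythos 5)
Your proof is correct, and it takes a genuinely different route from the paper's. The paper follows the ``classical'' M\"obius-inversion template: it first establishes that $I(h)$ is always a flat (Corollary~\ref{I=flat}), defines a map $\phi:\H^k \to \L_{P(\H)}$ into the lattice of flats, computes $\sum_{T \supseteq S} f(T)$ for $f(S) = |\phi^{-1}(S)|$, and then invokes Proposition~\ref{mobius char poly} (the M\"obius-function formula for $\mathlarger{\chi}_P$) together with M\"obius inversion on $\L_{P(\H)}$; this requires a separate disposal of the loop case, since Proposition~\ref{mobius char poly} only gives the M\"obius formula when $P$ has no loops. Your argument instead runs inclusion--exclusion over the Boolean lattice $2^E$ directly, and lands exactly on the defining sum $\mathlarger{\chi}_P(t) = \sum_{S\subseteq E}(-1)^{|S|}t^{r(E)-r(S)}$, so you never need the flats-lattice apparatus, the M\"obius-function expression, or a separate loop case. (Both approaches are, of course, M\"obius inversion; yours is over $2^E$, the paper's over $\L_{P(\H)}$.) Your route is shorter and more self-contained. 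What the paper's route buys is fidelity to the classical Crapo--Rota proof over finite fields --- illustrating the paper's central thesis that the group and field settings run in lockstep --- and it develops lemmas (notably Corollary~\ref{I=flat} and Propositions~\ref{realizing coatoms} and \ref{realizing flats}) that are reused verbatim in the dual Theorem~\ref{Dual Crapo-Rota}.

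One small slip in your loop aside: $r(\{x\})=0$ means $|\H_{\{x\}}|=1$, i.e.\ every $h\in\H$ has $h_x=1_{\Gamma_x}$; it does \emph{not} mean $\Gamma_x$ itself is trivial. The conclusion you draw ($x\in I(h)$ for all $h$, hence the count is $0$) is still right, and in fact your inclusion--exclusion computation handles the loop case automatically without any special treatment, so the aside is unnecessary.
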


\noindent Observe that if $\Gamma_x$ is the additive group of a fixed finite field $\Field$ for every $x,$ and $b=|\Field|,$ then Theorem 
\ref{group Crapo-Rota} becomes Theorem \ref{field Crapo-Rota}.

Our proof of this theorem closely follows the usual M\"obius inversion proof of the Crapo-Rota critical theorem.   To prepare for this, we start by demonstrating several ways that $I(h)$ for realized polymatroids is similar to $Z(w)$ for realized matroids.   The next corollary and two propositions are polymatroid analogs of standard properties of realized matroids.

\begin{cor} \label{I=flat}
If $h \in \H,$ then $S=I(h)$ is a flat of $P(\H).$
\end{cor}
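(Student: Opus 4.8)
The plan is to show that if $S = I(h)$ for some $h \in \H$, then for every $x \notin S$ we have $r(S \cup \{x\}) > r(S)$, which is exactly the definition of a flat. By Equation \eqref{one more}, this inequality is equivalent to showing that the kernel $\KK_S = \ker(\pr_S : \H_{S \cup \{x\}} \to \H_S)$ is nontrivial, i.e. that there exists a nonidentity element of $\H_{S \cup \{x\}}$ whose $S$-coordinates are all trivial.

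First I would exhibit such an element explicitly. Since $h \in \H$, its projection $\pr_{S \cup \{x\}}(h) \in \H_{S \cup \{x\}}$. By definition of $S = I(h)$, we have $h_y = 1_{\Gamma_y}$ for every $y \in S$, so $\pr_{S \cup \{x\}}(h)$ has all its $S$-coordinates equal to the identity; hence it lies in $\KK_S$. It remains to check that this element is not the identity of $\G_{S \cup \{x\}}$: if it were, then in particular $h_x = 1_{\Gamma_x}$, which would put $x$ into $I(h) = S$, contradicting the assumption $x \notin S$. Therefore $\KK_S$ is nontrivial, so $r(S \cup \{x\}) - r(S) = \log_b |\KK_S| > 0$, and $S$ is a flat.

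This argument has no real obstacle — the only subtlety is being careful that one is comparing $\pr_{S \cup \{x\}}(h)$ against the identity of the smaller product $\G_{S \cup \{x\}}$ rather than of $\G$, but that is precisely where the hypothesis $x \notin S = I(h)$ is used. One could alternatively phrase the whole thing without reference to kernels by noting directly that $|\H_{S \cup \{x\}}| \geq 2|\H_S|$ is not needed — only strict inequality $|\H_{S \cup \{x\}}| > |\H_S|$ — which follows because the surjection $\pr_S : \H_{S \cup \{x\}} \to \H_S$ is not injective, again witnessed by the two distinct elements $\pr_{S \cup \{x\}}(h)$ and the identity mapping to the same point of $\H_S$. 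Either formulation gives a one-line proof, so the corollary really is, as the text says, a direct polymatroid analog of the standard matroid fact that zero sets of vectors in a row space are flats.
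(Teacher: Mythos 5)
Your argument is the same as the paper's, which simply says that for $x \notin S$, $h$ gives a nontrivial element of $\ker \pr_S : \H_{S \cup \{x\}} \to \H_S$ and then invokes equation (\ref{one more}). You've merely spelled out the detail the paper leaves implicit, namely that the kernel element is $\pr_{S\cup\{x\}}(h)$ (not $h$ itself) and that its nontriviality is witnessed by $h_x \neq 1_{\Gamma_x}$; both proofs are correct and essentially identical.
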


\begin{proof}
If $x \notin S,$ then $h$ is a nontrivial element of  $\ker \pr_S: \H_{S \cup \{x\}} \to \H_S.$ Hence (\ref{one more}) shows that $r_\H(S \cup \{x\}) - r_\H(S)>0.$
\end{proof}

\begin{prop} \label{realizing coatoms}
If $S$ is a coatom of $\L_{P(\H)},$ then there exists $h \in \H$ such that $I(h) = S.$
\end{prop}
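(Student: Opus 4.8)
The plan is to mimic the classical matroid argument: a coatom (corank-one flat) of a realized matroid is the zero set of some row vector, because projecting onto the coordinates outside the flat lands in a rank-one space, so there is a nonzero vector vanishing on the flat. Here I want the group-theoretic analog. Let $S$ be a coatom of $\L_{P(\H)}$, so $S$ is a flat with $r_\H(S) = r_\H(E) - c$ for... wait, actually a coatom just means it is covered by the top element $E$ in the lattice of flats, i.e. $S$ is a flat and the only flat strictly containing $S$ is $E$. The key numeric consequence I would extract first: for every $x \in E - S$, the flat generated by $S \cup \{x\}$ must be all of $E$, and more importantly, $r_\H(S \cup \{x\}) > r_\H(S)$.

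First I would reduce to finding a single nontrivial element of a suitable kernel. Since $S$ is a flat, for each $x \notin S$ we have $r_\H(S \cup \{x\}) > r_\H(S)$, hence by equation (\ref{one more}) the kernel $\ker(\pr_S : \H_{S \cup \{x\}} \to \H_S)$ is nontrivial — there is an element of $\H$ that is trivial on all of $S$ but nontrivial at $x$. The goal is to produce a single $h \in \H$ that is trivial exactly on $S$, i.e. trivial on all of $S$ and nontrivial at every $x \in E - S$ simultaneously. The natural approach is to look at $N = \ker(\pr_S : \H \to \H_S)$, the subgroup of $\H$ supported off $S$; I want to show $N$ has an element whose support is all of $E - S$. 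Equivalently, letting $N_x = \pr_x(N) \le \Gamma_x$ for $x \in E-S$ (each nontrivial by the previous sentence), I must rule out the possibility that every $h \in N$ is trivial at some coordinate of $E-S$.

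This is exactly where I expect the main obstacle, and where the coatom hypothesis must be used more than just coordinate-by-coordinate. The idea: suppose for contradiction that for every $h \in N$, $I(h) \supsetneq S$, i.e. $I(h) \cap (E-S) \ne \emptyset$. Consider $T = \bigcap_{h \in N} I(h)$, which contains $S$. I claim $T$ is a flat: if $x \notin T$, then some $h \in N$ is nontrivial at $x$, and $h$ witnesses (via (\ref{one more})) that $r_\H(T \cup \{x\}) > r_\H(T)$ — here I use that $h$ is trivial on all of $T$ since $T \subseteq I(h)$. Wait, I need $h$ trivial on $T$: indeed $T \subseteq I(h)$ by definition of $T$ as the intersection, so yes $h \in \ker(\pr_T : \H_{T\cup\{x\}} \to \H_T)$ is nontrivial, giving strict rank increase. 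So $T$ is a flat containing $S$. Now I want $T \ne E$, because $T = \bigcap_{h\in N} I(h)$ and... hmm, this is the delicate point: I need that $N$ is nonempty with some nontrivial element, so some $I(h) \ne E$, hence $T \ne E$ — but wait, that only works if $N$ is nontrivial, which holds since $E - S$ is nonempty (as $S$ is a coatom, $S \ne E$) and each $N_x$ is nontrivial. So $T$ is a flat with $S \subseteq T \subsetneq E$, and since $S$ is a coatom, $T = S$. Therefore $\bigcap_{h \in N} I(h) = S$, meaning for every $x \in E - S$ there is $h^{(x)} \in N$ nontrivial at $x$. To finish I would combine these finitely many elements into one: since $N$ is a subgroup (as kernel of a homomorphism — note $\pr_S$ restricted to $\H$ is a homomorphism because $S$-coordinates form a subgroup, and normality is automatic for the kernel), a generic product or well-chosen word in the $h^{(x)}$ would be nontrivial at every $x$; but products can cancel in nonabelian groups, so instead I would iterate: the set of $h \in N$ with $I(h) \cap (E-S)$ minimal — take such an $h$ with $I(h) = S' \supseteq S$; if $S' \ne S$ pick $x \in S' - S$ and $h^{(x)} \in N$ nontrivial at $x$, then for a suitable power or the commutator, or by passing to the subgroup generated and re-running the flat argument on that subgroup, reduce $S'$. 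Actually the cleanest fix: replace $N$ in the above argument by the cyclic subgroup or, better, observe directly that the flat $T = \bigcap_{h \in N} I(h) = S$ argument shows it suffices to find one $h \in N$ with $I(h) = S$; run the minimality argument — choose $h \in N$ with $|I(h)|$ minimal; if $I(h) = S'' \supsetneq S$, then $S''$ is not a flat (it properly contains $S$ and is properly contained in $E$, contradicting coatomicity) unless... no wait, $I(h)$ is always a flat by Corollary \ref{I=flat}, so $I(h)$ is a flat with $S \subseteq I(h)$, forcing $I(h) = S$ or $I(h) = E$; the latter means $h = 1$; so any nontrivial $h \in N$ already has $I(h) = S$. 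That is the slick finish: Corollary \ref{I=flat} says $I(h)$ is a flat, coatomicity of $S$ plus $S \subseteq I(h) \subsetneq E$ (strict since $h$ nontrivial off $S$... need $I(h) \supseteq S$: yes, $h \in N$ means $h$ trivial on $S$) forces $I(h) = S$. So I just need $N$ to contain a nontrivial element, which holds because $S \ne E$ and any $x \in E - S$ gives, by (\ref{one more}) and flatness of $S$, a nontrivial element of $\ker(\pr_S : \H_{S\cup\{x\}} \to \H_S)$ which lifts to a nontrivial element of $N$. The main thing to be careful about is this lifting — showing that a nontrivial element of $\ker(\pr_S : \H_{S\cup\{x\}} \to \H_S)$ comes from an element of $\H$ trivial on all of $S$; since $\H_{S \cup \{x\}} = \pr_{S \cup \{x\}}(\H)$, any such element is $\pr_{S\cup\{x\}}(h)$ for some $h \in \H$, and $\pr_S(h) = 1$, so $h \in N$ and $h \ne 1$.

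So the proof structure I would write is: (1) $E - S \ne \emptyset$ since $S$ is a coatom; (2) pick $x \in E - S$; since $S$ is a flat, $r_\H(S \cup\{x\}) > r_\H(S)$, so by (\ref{one more}) there is a nontrivial $h \in \ker(\pr_S:\H_{S\cup\{x\}}\to\H_S)$; lift to $h \in \H$ with $\pr_S(h) = 1$ and $h \ne 1$; (3) by Corollary \ref{I=flat}, $I(h)$ is a flat; since $\pr_S(h) = 1$ we have $S \subseteq I(h)$, and since $h \ne 1$ we have $I(h) \ne E$; (4) $S$ being a coatom forces $I(h) = S$, completing the proof. This is short and uses exactly the preceding results; the only real content is recognizing that Corollary \ref{I=flat} does all the heavy lifting, so there is essentially no obstacle once that corollary is in hand — the "hard part" is just not overcomplicating it with the intersection/product argument I initially reached for.
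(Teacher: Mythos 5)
Your final four-step outline is precisely the paper's proof: both exhibit a nontrivial $h\in\ker(\pr_S:\H\to\H_S)$ and then apply Corollary~\ref{I=flat} together with coatomicity of $S$ to force $I(h)=S$. The only cosmetic difference is that the paper establishes nontriviality of this kernel by observing that $\pr_{E-S}(\ker\pr_S)$ realizes the contraction $P(\H)/S$, whereas you obtain a nontrivial element directly from the rank increase at a single $x\in E-S$ via~(\ref{one more}); these amount to the same observation, so the arguments coincide in substance.
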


\begin{proof}
Let $S$ be a coatom of $\L_{P(\H)}$ and set $\H'= \ker \pr_S:\H \to \H_S.$  Since $\pr_{E-S} \H'$ is a realization of $P(\H)/S,~\H'$ is not the trivial group.  Now, any $h' \in \H'$ which is not the identity must satisfy $I(h')= S.$
\end{proof}

\begin{prop} \label{realizing flats}
If $S$ is a flat of $P$ then there exists $\{h_1, \dots, h_l\},~l \le |E-S|,$ such that $S = \displaystyle\cap^l_{i=1} I(h_i).$
\end{prop}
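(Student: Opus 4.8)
The plan is to separate each point outside the flat $S$ from $S$ by a single element of $\H$, and then intersect. This is the polymatroid analogue of a standard matroid argument, made especially transparent by the group realization, which hands us the separating elements one coordinate at a time.

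First I would translate the hypothesis. Saying $S$ is a flat of $P = P(\H)$ means $r_\H(S \cup \{x\}) > r_\H(S)$ for every $x \in E - S$, and by equation~(\ref{one more}) this is equivalent to $\ker\bigl(\pr_S : \H_{S \cup \{x\}} \to \H_S\bigr)$ being nontrivial. An element of this kernel is $\pr_{S \cup \{x\}}(h)$ for some $h \in \H$; applying $\pr_S$ shows $\pr_S(h)$ is trivial, i.e.\ $h_y = 1_{\Gamma_y}$ for all $y \in S$. Since the kernel is nontrivial and such an element already vanishes on $S$, we may choose such an $h$, call it $h_x$, with $(h_x)_x \neq 1_{\Gamma_x}$.

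Next I would read off the two needed properties of $h_x$. Because all $S$-coordinates of $h_x$ are trivial, $S \subseteq I(h_x)$; because $(h_x)_x \neq 1_{\Gamma_x}$, $x \notin I(h_x)$. Hence $S \subseteq \bigcap_{x \in E - S} I(h_x)$, while for each particular $x \in E - S$ the element $h_x$ witnesses $x \notin \bigcap_{x' \in E - S} I(h_{x'})$. Therefore $\bigcap_{x \in E - S} I(h_x) = S$, exhibiting $S$ as an intersection of exactly $|E - S|$ sets of the form $I(h)$, so we may take $l = |E - S|$, which meets the bound $l \le |E - S|$. (If $S = E$ the index set is empty, the empty intersection is $E = S$, and $l = 0$.)

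I do not expect any serious obstacle. The only subtlety is the dual insistence, in the choice of $h_x$, that $h_x$ be simultaneously trivial on all of $S$ and nontrivial at $x$: the first clause yields $S \subseteq I(h_x)$ and the second yields $x \notin I(h_x)$, and both are supplied at once by nontriviality of the projection kernel in~(\ref{one more}). If a minimal family were wanted one could afterwards discard redundant $h_x$'s, but this is not needed for the stated bound.
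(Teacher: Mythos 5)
Your proof is correct and takes essentially the same route as the paper: for each $x \in E-S$ you produce an $h \in \H$ that is trivial on $S$ and nontrivial at $x$, then intersect the $I(h)$'s. The only cosmetic difference is that you invoke equation~(\ref{one more}) directly, whereas the paper phrases the same fact as ``$x$ is not a loop of $P(\H)/S$'' together with the realization of $P/S$ by $\pr_{E-S}(\ker\pr_S)$.
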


\begin{proof}
Let $x \in E-S.$ Since $S$ is a flat, $x$ is not a loop of $P(\H)/S.$  Therefore, there exists $h \in \ker \pr_S: \H \to \H_S$ such that $h_x \neq 1_\Gamma.$ For each $x \in E-S$ choose such an $h$ and denote it by $h(x).$ Then 
$$S = \bigcap_{x \in E-S} I(h(x))$$
as required. 
\end{proof}

 \begin{proof} (of Theorem \ref{group Crapo-Rota})
The proof follows the same pattern as the usual M\"obius inversion proof of the Crapo-Rota critical theorem for finite fields.   If $x$ is a loop of $P$,  then for all $h \in \H,~ h_x = 1_{\Gamma_x}$  and both sides of (\ref{Crapo-Rota formula}) are zero.  

So assume that $P$ has no loops. Thus the empty set is a flat of $\L_{P(\H)}$ and we can apply Proposition \ref{mobius char poly} when needed later.  Since $I(h)$ is always a flat of $P$, and the flats of $P$ are closed under intersection, we can define $\phi:\H^k \to \L_{P(\H)}$ by $\phi(h_1, \dots, h_k) = I(h_1) \cap \cdots \cap I(h_k).$  Now let $f:\L_{P(\H)} \to \Z_{\ge 0}$ be defined by $f(S) = |\phi^{-1}(S)|$.   Then $\displaystyle\sum_{T \supseteq S} f(T)$ is the number of $(h_1, \dots, h_k)  \in \H^k$ with $(h_i)_x = 1_{\Gamma_x}$ for all $1 \le i \le k$ and $x \in S.$ This is 
$$|\ker \pr_S: \H \to \H_S|^k =\bigg( \frac{|\H|}{|\H_S|} \bigg)^k =( b^{r(E) - r(S)})^k=(b^k)^{r(E) - r(S)}.$$
 Lemma \ref{mobius char poly} and M\"obius inversion finish the proof. 

\end{proof}

In \cite {H} Helgason used M\"obius inversion to prove that the classical relationship between the chromatic polynomial of a graph and the characteristic polynomial of the cycle matroid of the graph extends to hypergraph coloring and the characteristic polynomial of a related polymatroid. Later,  Whittle provided a deletion/contraction proof of the same result \cite{Whitt}.   Whittle also shows how to relate this to a polymatroid version of the Crapo-Rota critical theorem for polymatroids representable over a field whenever the number of colors is a prime power.  In addition, he discusses how to  extend these ideas to an arbitrary number of colors  by using Dowling geometries.  Here we use Theorem \ref{group Crapo-Rota} to relate hypergraph coloring to critical problems over groups for any number of colors.  In section \ref{Polymatroid duality} we will use Theorem \ref{group Crapo-Rota} to establish a hypergraph  analog of the usual duality between nowhere-zero flows and proper colorings for graphs.

A {\bf hypergraph} is a pair $H=(E,V)$ consisting of a finite vertex set $V$ and a finite multiset of hyperedges $E.$ Each hyperedge is a finite multiset of $V.$  When necessary we write $E(H)$ and $V(H)$ for the hyperedges and vertices of $H$. We will use $|x|$ to indicate the cardinality of a hyperedge $x$ as a multiset.  So, if $x = \{a,a,b,b,b,c\},$ then $|x|=6.$ Given an undirected graph $G$ we can associate a cryptomorphic hypergraph $H(G)$ whose vertices are the vertices of the graphs, loops based at a vertex $\{a\}$ are listed as a hyperedge $\{a,a\},$ and edges with distinct incident vertices $a$ and $b$ are listed as hyperedges $\{a,b\}$ as many times as there are edges in $G$ with end points $a$ and $b.$  

  We  associate two different graphs to a hypergraph $H.$   The first graph attached to $H$ is a union of star-like multigraphs and is denoted by $\SG(H).$ The vertices of $\SG(H)$ are equal to the vertices of $H.$ To enumerate the edges of $\SG(H)$ we first define a set of edges $E(x)$ for every $x \in E.$ Given a hyperedge $x$ of $H$ choose one vertex $v_x$ in $x.$      If $v \neq v_x$ has multiplicity $s$ in $x,$ then $E(x)$ has $s$ parallel edges whose end points are $v$ and $v_x.$   The set of edges in $E(x)$ also contains $t-1$ loops where $t$ is  the multiplicity of $v_x$ in $x.$ Hence, the total number of edges in $E(x)$ is $|x|-1.$ The set of edges of $\SG(H)$ is the union of all of the $E(x).$ Thus the total number of edges in $\SG(H)$ is $\displaystyle\sum_{x \in E} |x|-1.$  See Figure \ref{hypergraph} and Figure \ref{SG example} for an example of $\SG(H).$ If $H$ is equal to $H(G)$ for a graph $G,$ then $\SG(H)= G.$ 
  
  The second graph we associate to $H$ is the usual bipartite graph which contains the same information as  $H.$  We hold off its description until we need it in Section \ref{Polymatroid duality}.

For the remainder of this section $\Gamma$ is a finite {\em abelian} group.  A {\bf proper  $\Gamma$-coloring} of a hypergraph $H$ is a map $\phi:V \to \Gamma$ so that for all $x \in E$ there exist vertices $a, b$ in $x$ such that $\phi(a) \neq \phi(b).$ Observe that for a graph $G,$  proper $\Gamma$-colorings of $H(G)$ are the same as proper $\Gamma$-colorings of $G.$  The cardinality of $\Gamma$ determines the number of proper $\Gamma$-colorings of $H,$ so we let $C_H(\lambda)$ be the number of proper $\Gamma$-colorings of $H$ for any  $\Gamma$ of cardinality $\lambda.$  While  the group structure of $\Gamma$ is not needed to define proper colorings, it will be needed for the duality with nowhere-zero flows in section \ref{Polymatroid duality}. 

The polymatroid defined in \cite{Whitt} is isomorphic to the polymatroid $P(H)= (E, r_H)$ constructed from $M[\SG(H)]$ and the partition $\{E(x_1), \dots, E(x_n)\}$ of $E(\SG(H)).$   We analyze $P(H)$ using an adjacency matrix $A(H)$ of $\SG(H).$ 
   Start by ordering the edges and vertices of $H$,  $E =\{x_1, \dots, x_n\},~V=\{v_1, \dots, v_m\}.$  The rows of $A(H)$ are indexed by $V.$ The columns are labelled by  edges of $\SG(H)$  as follows.   The edges are listed so that those in $E(x_1)$ come first, those in $E(x_2)$ come second, up to $E(x_n).$ Then, for $S \subseteq E,~r_H(S)$ is the dimension of the column space spanned by the union of all of the $E(x),~x \in S.$  If we direct all the edges in $E(x)$ so that the tail is $v_x,$ then $A(H)$ is the usual adjacency matrix of a directed graph.  Specifically, the entry of $A(H)$ in a row labelled by $v \in V$ and column labelled by $\{v_x,w\} \in E(x)$ is given by

$$ \begin{cases} 1 & \mbox{ if } v=w \mbox{ and},~w \neq v_x.\\ 0 & v \notin x\mbox{ or } v=w \mbox{ and } w=v_x \\  -1 & \mbox{ if }v = v_x \mbox{ and } w \neq v_x. \end{cases}$$

\begin{example} \label{the example}
Let $V=\{a,b,c,d\}$ and $ E= \{\{a,b,c\}, \{a,b,b,d\}, \{a,b,b,b\},\{c,d\}\}$ and choose $v_x$ to be  the alphabetically last vertex in each hyperedge.  Then 
\begin{equation} \label{hypergraph adjacency matrix}
A(H) = 
\begin{bmatrix}
\ & \{c,a\} & \{c,b\} & \{d,a\} &\{d, b\} & \{d,b'\} & \{b'',a\} & \{b'',b\} &\{b'', b'\}&\{d,c\} \\
a & 1 & 0 & 1 & 0 & 0 & 1 & 0 & 0&0\\
b & 0 & 1 & 0 & 1 & 1 & -1 & 0 & 0& 0 \\
c & -1 & -1 & 0 & 0 & 0 & 0 & 0 & 0& 1\\
d & 0 & 0 & -1 & -1 & -1 & 0 & 0 & 0 & -1
\end{bmatrix}.
\end{equation}
\noindent The primes on some of the column labels is to allow us to distinguish distinct copies of the same vertex in a hyperedge. 
\end{example}

\begin{figure}[h]
\begin{center}
\begin{tikzpicture}

\node[main node, fill=black, label={north: $c$}] [minimum size=0.2cm] (c) at (0,0) {};
\node[main node, fill=black, label={north: $d$}] [minimum size=0.2cm] (d) at (5,0) {};
\node[main node, fill=black, label={south: $b$}] [minimum size=0.2cm] (b) at (4.5,2) {};
\node[main node, fill=black, label={south: $b'$}] [minimum size=0.2cm] (b') at (5,2) {};
\node[main node, fill=black, label={south: $b''$}] [minimum size=0.2cm] (b'') at (5.5,2) {};
\node[main node, fill=black, label={south: $a$}] [minimum size=0.2cm] (a) at (0,2) {};

\node [draw, ultra thick, color=cbgold, rounded corners=10pt,inner sep=14pt, fit=(c) (d), label={south: $x_4$}] (cd){};
\node [fit=(a) (b) (c), label={west: $x_1$}] (abc){};
\draw [ultra thick, color=cbblue, solid,rounded corners=10pt, inner sep=10pt] ($(abc.south west)+(0,-0.25)$) -- ($(abc.north west)+(0,0)$) -- ($(abc.north east)+(0,0)$)--($(abc.north east)+(0,-1)$)-- cycle;
\node [fit=(a) (b') (d), label={east: $x_2$}] (abd){};
\draw [ultra thick, color=cbgreen, solid,rounded corners=10pt, inner sep=10pt] ($(abc.south east)+(.5,0)$) -- ($(abc.north west)+(-.15,-1)$)-- ($(abc.north west)+(-.15,.15)$) --($(abc.north east)+(.5,.15)$)-- cycle;
\node [draw, ultra thick, color=cbpink, rounded corners=10pt,inner sep=14pt, fit=(a) (b''), label={north: $x_3$}] (ab){};
\end{tikzpicture}
\caption{$H$ from Example \ref{the example} - $b,b'$ and $b''$ represent the same vertex.} \label{hypergraph}
\end{center}
\end{figure}
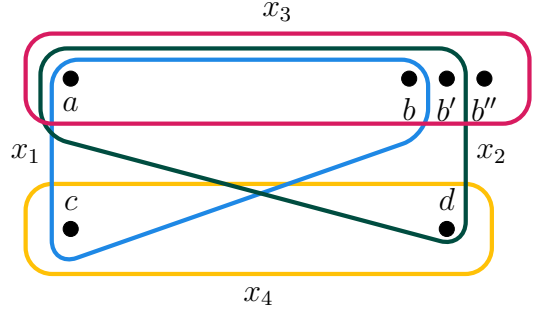

\begin{figure}[h]
\centering
  \begin{tikzpicture}
    \node[main node, fill=black, label={west: $a$}] [minimum size=0.2cm] (1) {} ;
    \node[main node, fill=black, label={east: $b$}] [minimum size=.2cm] (2) [right = 5cm of 1] {};
    \node[main node, fill=black, label={west: $c$}] [minimum size=.2cm] (3) [below = 4cm of 1] {};
     \node[main node, fill=black, label={east: $d$}] [minimum size=.2cm] (4) [below = 4cm of 2] {};
        
    \path[draw, ultra thick, cbpink]
    (2) edge node {} (1);
    \path[-,every loop/.style={out=145, in=35, looseness=70}, draw, ultra thick, cbpink]
    (2) edge [loop above] node {} ();  
    \path[-,every loop/.style={out=130, in=50, looseness=50}, draw, ultra thick, cbpink]
    (2) edge [loop above] node {} ();      
    \path[draw, ultra thick, cbblue]
    (1) edge node {} (3)
    (2) edge node {} (3);
    \path[draw, ultra thick, out=240, in=120, cbgreen]
    (2) edge node {} (4);
    \path[draw, ultra thick, out=300, in=60, cbgreen]
    (2) edge node {} (4);
    \path[draw, ultra thick, cbgreen]
    (1) edge node {} (4);
    \path[draw, ultra thick, cbgold]
    (4) edge node {} (3);
    
  \end{tikzpicture}
\caption{$\SG(H)$ - {\textcolor{cbblue}{$E(x_1)$}},  {\textcolor{cbgreen}{$E(x_2)$}},  {\textcolor{cbpink}{$E(x_3)$}},  {\textcolor{cbgold}{$E(x_4)$}}}  \label{SG example}
\end{figure}
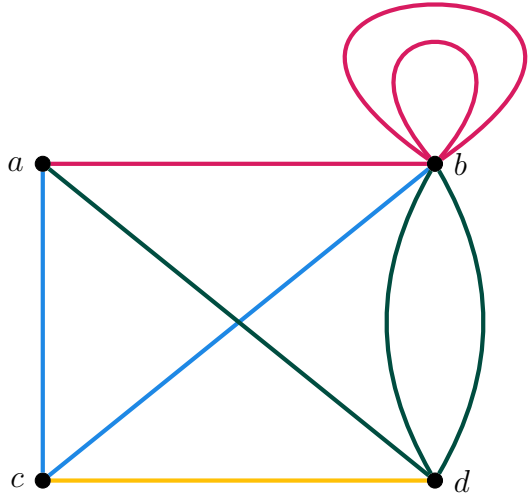

In \cite{Whitt} Whittle attributes the following theorem to Helgason \cite{H} and gives a deletion/contraction argument.  We will give a critical theorem proof.  When $H=H(G)$ for a graph $G,$ Theorem \ref{hypergraph coloring} is the usual relationship between the chromatic polynomial of a graph and the characteristic polynomial of its cycle matroid.   Let $\kappa(H)$ be the number of components of $\SG(H).$

\begin{thm}   \label{hypergraph coloring}
\begin{equation}
C_H(\lambda) = \lambda^{\kappa(H)} \mathlarger{\chi}_{P(H)}(\lambda).
\end{equation}
\end{thm}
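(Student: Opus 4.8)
The plan is to deduce Theorem~\ref{hypergraph coloring} from the group critical theorem (Theorem~\ref{group Crapo-Rota}) applied to a carefully chosen subgroup $\H \le \Gamma^{E}$ that realizes $P(H)$, by setting the parameter $k=1$ and matching both sides of the coloring identity. First I would recall that $P(H)$ is, by definition, the polymatroid constructed from the graphic matroid $M[\SG(H)]$ and the partition $\{E(x_1),\dots,E(x_n)\}$ of the edge set of $\SG(H)$. The natural group realization to use is $\H = R(A(H))$, the row space of the adjacency matrix $A(H)$ of $\SG(H)$ described in the excerpt, viewed inside $\Gamma^{E(\SG(H))}$ where each coordinate group is the additive group of $\Gamma$; since row spaces over $\Gamma$ of the incidence matrix of a graph have projection cardinalities governed by the graphic matroid, $P(\H) \cong P[\SG(H)]$ as a matroid, and then the partition of coordinates into the blocks $E(x)$ turns this into exactly $P(H)$. (Alternatively, one works directly with $\H \le \Gamma^{E}$ whose elements are the $n$-tuples $(w|_{E(x_1)},\dots,w|_{E(x_n)})$ with $w \in R(A(H))$, so that the single index $x$ is ``trivial'' in $I(h)$ precisely when $w$ vanishes on all of $E(x)$.)

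Next I would set $k=1$ in Theorem~\ref{group Crapo-Rota}, which gives
\begin{equation}
\mathlarger{\chi}_{P(H)}(b) = |\{h \in \H : I(h) = \emptyset\}|,
\end{equation}
where $b = |\Gamma|$ and $I(h) = \{x \in E : h$ is trivial on $E(x)\}$. The heart of the argument is then a bijection, or rather a $\lambda^{\kappa(H)}$-to-one correspondence, between proper $\Gamma$-colorings of $H$ and elements $h \in \H$ with $I(h)=\emptyset$. Given a coloring $\con: V \to \Gamma$, I would form the ``coboundary'' $w_\con \in \Gamma^{E(\SG(H))}$ assigning to each edge $e = \{v_x, w\}$ of $\SG(H)$ the value $\con(w) - \con(v_x)$ (using the chosen tail $v_x$ for orientation, and $0$ on loops). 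This $w_\con$ lies in $R(A(H))$ because the coboundary of a $0$-cochain is a $\Gamma$-combination of the columns... more precisely because the row space of the incidence matrix is exactly the space of such coboundaries on each connected component, up to the global constant freedom per component. The key observation is: $w_\con$ vanishes on \emph{all} edges of $E(x)$ if and only if $\con$ is constant on the vertex set of $x$; but $\con$ being a proper coloring of $H$ means that for every hyperedge $x$ there are two vertices of $x$ with different colors, i.e.\ $\con$ is \emph{not} constant on $x$. Hence ``$\con$ proper'' $\iff$ ``$I(w_\con) = \emptyset$''. The map $\con \mapsto w_\con$ is $\lambda^{\kappa(H)}$-to-one onto its image in $R(A(H))$, since adding a constant to $\con$ on any connected component of $\SG(H)$ leaves $w_\con$ unchanged, and these are exactly the fibers (this is where $\kappa(H)$, the number of components of $\SG(H)$, enters). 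Combining, $C_H(\lambda) = \lambda^{\kappa(H)} \cdot |\{h \in \H : I(h)=\emptyset\}| = \lambda^{\kappa(H)}\,\mathlarger{\chi}_{P(H)}(\lambda)$.

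I expect the main obstacle to be the bookkeeping that establishes $P(\H) \cong P(H)$ cleanly and that $\H$ (as a subgroup of $\Gamma^{E(\SG(H))}$, abelian $\Gamma$) really consists exactly of the $\Gamma$-coboundaries component-by-component, together with verifying the fiber count of the coloring-to-coboundary map is exactly $\lambda^{\kappa(H)}$ rather than something subtler when $\SG(H)$ has loops or isolated vertices. One has to be careful that a hyperedge $x$ with $v_x$ of high multiplicity contributes loops to $E(x)$, on which $w_\con$ is automatically $0$, so triviality on $E(x)$ is controlled only by the non-loop edges of $E(x)$; but a vertex $v \ne v_x$ appearing in $x$ always gives a genuine (non-loop) edge, so ``$w_\con$ trivial on $E(x)$'' still correctly encodes ``$\con$ constant on $x$''. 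Once these structural identifications are in place, the rest is a direct substitution into Theorem~\ref{group Crapo-Rota}. It is also worth noting the degenerate case where $H$ has an empty hyperedge or a hyperedge that is a single repeated vertex: such an $x$ can never be properly colored, $C_H(\lambda)=0$, and correspondingly $x$ is a loop of $P(H)$ so $\mathlarger{\chi}_{P(H)} \equiv 0$ by Proposition~\ref{mobius char poly}, consistent with the formula.
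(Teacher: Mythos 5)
Your argument is essentially the paper's: they also take $\H$ to be the image of $\Gamma^m$ under right-multiplication by $A(H)$ (regrouped into the blocks $E(x_i)$ to land in $\G'=\prod_i\Gamma^{|x_i|-1}$), apply Theorem~\ref{group Crapo-Rota} with $k=1$, observe that a coloring $\phi_\gamma$ is proper precisely when $I(\psi(\gamma))=\emptyset$, and compute the kernel of the coboundary map to be $b^{\kappa(H)}$. The one step you gesture at rather than prove — that the rank function of $\H\le\Gamma^k$ agrees with that of $M[A(H)]$, i.e.\ $|\H_T|=|\Gamma|^{r_A(T)}$ for every column set $T$ — the paper establishes via total unimodularity of $A(H)$ and integral row reduction, which is exactly the bookkeeping you flagged as the main obstacle.
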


\begin{proof}

Let $A=A(H).$ So $A$ is an $m \times k$ matrix with $k = \sum^n_{i=1} |x_i|-1.$ View elements $\gamma = (\gamma_1, \dots, \gamma_m) \in \Gamma^m$ as row vectors. Since $\Gamma^m$ is a $\Z$-module, it makes sense to set $\H$ to  be the image of $\Gamma^m$ under right-multiplication by $A$  in $\G=\Gamma^k.$  Then, setting $b=|\Gamma|,~P(\H)=P(\H,b)$ is a polymatroid with ground set  $[k]$ once we identify $[k]$ with the columns of $A.$

Claim: $P(\H)$ is isomorphic to $M[A].$ To see this, consider $T$ a subset of columns of $A$ and $D$ a basis of $T$ in the matroid $M[A].$    Since $A$ is the adjacency matrix of a graph, $A$ is totally unimodular.   Hence,  there is a row echelon form $A'$ of $A,$ obtained by integral row operations, such that the pivot columns of $A'$ contained in $T$ are the columns in $D.$   This implies that $\H_T$ is isomorphic to $\Gamma^{r_A(T)}$ and $r_\H(T) =r_A(T).$  

As noted above, $P(H)$ is the polymatroid constructed from $M[A]$ and the partition $\{E(x_1), \dots, E(x_n)\}$ of the columns of $A.$ Let $\G'=\displaystyle\prod^n_{i=1} \Gamma^{|x_i|-1}$ and  $f:\Gamma^k \to \displaystyle\prod^n_{i=1} \Gamma^{|x_i|-1}$ be the  isomorphism which maps the first $|x_1|-1$ coordinates of $\Gamma^k$ to $\Gamma^{|x_1|-1}$, the next $|x_2|-1$ coordinates to $\Gamma^{|x_2|-1},$ etc. This gives us a homomorphism $\psi: \Gamma^m \to \G'$ defined by $\psi(\gamma) = f(\gamma \cdot A).$ Apply $\psi$ to $\Gamma^m$ to get a subgroup $\H'$ of $\G'.$  Then $P(\H')$ is the polymatroid constructed from $P(\H)$ and the partition $\{E(x_1), \dots, E(x_n)\}$ of $[k],$ and hence is isomorphic to $P(H).$ 

To an element $\gamma=(\gamma_1, \dots, \gamma_m) \in \Gamma^m$ we associate a $\Gamma$-coloring $\phi_\gamma$ of $H$ by $\phi_\gamma(v_i) = \gamma_i.$ 
The definition of $A$ shows that $\phi_\gamma$ is a proper $\Gamma$-coloring of $H$ if and only if $I(\psi(\gamma)) = \emptyset.$  By Theorem \ref{group Crapo-Rota} there are $\mathlarger{\chi}_{P(\H)}(b)$ distinct images  $\psi(\gamma)$ such that $I(\psi(\gamma)) = \emptyset.$   Every $\psi(\gamma)$ in $\H'$ has $|\ker \psi|$ preimages.  Examining $A$ we see that $\gamma$ is in the kernel of $\psi$ if and only of $\gamma_i=\gamma_j$ whenever $v_i$ and $v_j$ are in the same component of $\SG(H).$ Thus $|\ker \psi|=b^{\kappa(H)}$ and there are $b^{\kappa(H)} \mathlarger{\chi}_{P(H)}(b) = |\Gamma|^{\kappa(H)} \mathlarger{\chi}_{P(H)}(| \Gamma |)$ proper $\Gamma$-colorings of $H.$

\end{proof}

\section{Polymatroid duality} \label{Polymatroid duality}
 In contrast to matroid duality, there are several different notions of polymatroid duality in the literature. See, for instance, \cite{CMSW},\cite{EL}, \cite{JMW}.   One of the most general appears to be $\a$-duality.  Let $\a \in \R^E_{\ge 0}.$  Then a polymatroid $P$ is an  $\a$-polymatroid if for all $x \in E,~r(\{x\}) \le \a_x.$   We will use $\a[k]$ to represent $\a$ such that $\a_x=k$ for all $x \in E.$  Hence, an integer polymatroid is a $k$-polymatroid if and only if it is an $\a[k]$-polymatroid.   Similarly, $P$ is subcardinal if and only if it is an $\a[1]$-polymatroid.
  
Given $\a \in (\R_{\ge 0})^E$ and $S \subseteq E$ we define $|\a_S| = \displaystyle\sum_{x \in S} \a_x.$ We will use $|\a|$ for $|\a_E|.$   Let $P$ be an $\a$-polymatroid.  Define $r^{\ast_\a}: 2^E \to \R_{\ge 0}$ by
  \begin{equation} \label{a-duality}
  r^{\ast_\a}(S) = r(E-S) + |\a_S| - r(E).
  \end{equation} 
  
  \noindent We omit the routine proof of the following.  
  
    \begin{prop} \label{a-duality prop}
  Let $P$ be an $\a$-polymatroid.  Then $P^{\ast_a} = (E, r^{\ast_\a})$ is a polymatroid. Furthermore, 
  \begin{enumerate}
  
    \item $r^{\ast_\a}(E) = |\a| - r(E).$
  \item $(P^{\ast_\a})^{\ast_\a} = P.$
  \item If $S \subseteq E,$ then $(P^{\ast_\a} \setminus S)^{\ast_\a} = P/S.$
  \item If $S \subseteq E,$ then $(P^{\ast_\a}/S)^{\ast_\a} = P \setminus S.$ 
  
 \end{enumerate}

    \end{prop}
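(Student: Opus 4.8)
The plan is to verify the three polymatroid axioms P1, P2, P3 for $r^{\ast_\a}$ directly from the formula (\ref{a-duality}), and then to check the four numbered identities, each of which should reduce to a short algebraic manipulation of that formula. For P1, observe $r^{\ast_\a}(\emptyset) = r(E) + 0 - r(E) = 0$. For P2 and P3 it is cleaner to rewrite things in terms of the complement: setting $\bar S = E - S$, the function $S \mapsto r(E-S)$ is antitone and $S \mapsto |\a_S|$ is a modular (additive) function, so $r^{\ast_\a}$ is a sum of an antitone function of $S$ composed with complementation plus a modular term minus a constant. Concretely, for P2, if $S \subseteq T$ then $E - T \subseteq E - S$, so $r(E-T) \le r(E-S)$ by monotonicity of $r$, and $|\a_S| \le |\a_T|$ since $\a \ge 0$; but these inequalities point in opposite directions, so one must combine them. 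The clean way is submodularity: $r(E-S) - r(E-T) \le \sum_{x \in T - S} r(\{x\})$ by iterated application of P3 (equivalently P3'), and each $r(\{x\}) \le \a_x$ since $P$ is an $\a$-polymatroid, giving $r(E-S) - r(E-T) \le |\a_{T-S}| = |\a_T| - |\a_S|$, which rearranges to $r^{\ast_\a}(S) \le r^{\ast_\a}(T)$. For P3, using that $(E-S) \cup (E-T) = E - (S \cap T)$ and $(E-S) \cap (E-T) = E - (S \cup T)$, submodularity of $r$ applied to $E-S$ and $E-T$ gives exactly the submodularity of $r^{\ast_\a}$ once the modular term $|\a_S| + |\a_T| = |\a_{S \cup T}| + |\a_{S \cap T}|$ is added in; this cancels perfectly since $|\a_\cdot|$ is modular and the constant $r(E)$ appears with the right multiplicity on both sides.

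Next I would dispatch the four identities. Item (1) is immediate: $r^{\ast_\a}(E) = r(\emptyset) + |\a_E| - r(E) = |\a| - r(E)$. Item (2) is a direct substitution: $(r^{\ast_\a})^{\ast_\a}(S) = r^{\ast_\a}(E-S) + |\a_S| - r^{\ast_\a}(E) = \big(r(S) + |\a_{E-S}| - r(E)\big) + |\a_S| - \big(|\a| - r(E)\big)$, and since $|\a_{E-S}| + |\a_S| = |\a|$, everything collapses to $r(S)$. For items (3) and (4), recall that deletion $P \setminus S$ has rank function $r$ restricted to subsets of $E - S$, and contraction $P / S$ has $r_{P/S}(T) = r(S \cup T) - r(S)$ for $T \subseteq E - S$. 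To compute $(P^{\ast_\a} \setminus S)^{\ast_\a}$ on a subset $T \subseteq E - S$, note the ambient ground set is $E - S$, so the dual uses $\a$ restricted to $E - S$; unwinding the definition gives $(P^{\ast_\a} \setminus S)^{\ast_\a}(T) = r^{\ast_\a}\big((E-S) - T\big) + |\a_T| - r^{\ast_\a}(E - S)$, and expanding each $r^{\ast_\a}$ term via (\ref{a-duality}) (with the \emph{original} ambient set $E$) should leave exactly $r(S \cup T) - r(S) = r_{P/S}(T)$ after the $\a$-terms telescope. Item (4) follows from (3) by applying involutivity (item 2): replace $P$ by $P^{\ast_\a}$ in (3), use $(P^{\ast_\a})^{\ast_\a} = P$, and rearrange.

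I do not expect any genuine obstacle here — this is exactly the kind of "routine proof" the statement says is omitted, and the structural reason everything works is that $\a$-duality is an affine modification of ordinary matroid/polymatroid duality, so every step is either monotonicity of $r$, submodularity of $r$ (for the inequalities), or modularity of $|\a_\cdot|$ together with the complement bijection $S \leftrightarrow E - S$ (for the identities). The one place that requires a moment's care is the bookkeeping in items (3) and (4): one must be careful that when forming $P^{\ast_\a} \setminus S$ and then dualizing again, the second dual is taken over the smaller ground set $E - S$ with $\a$ correspondingly restricted, and that the term $r^{\ast_\a}(E-S)$ appearing in the second dual is evaluated using the \emph{full} ambient set $E$ in formula (\ref{a-duality}). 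Keeping these two uses of "ground set" straight is the only subtlety, and once the substitution is written out the $|\a|$ and $r(E)$ constants cancel in pairs.
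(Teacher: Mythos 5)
Your proof is correct, and the paper gives none to compare against: it explicitly writes ``We omit the routine proof of the following,'' so your argument is precisely the routine verification being waved at. You correctly isolate the one place the $\a$-polymatroid hypothesis $r(\{x\}) \le \a_x$ is actually needed (monotonicity of $r^{\ast_\a}$, via iterated diminishing returns), and you correctly flag the ground-set bookkeeping in items (3) and (4) as the only subtlety, namely that the outer $\ast_\a$ is taken over $E-S$ with $\a$ restricted while the inner $r^{\ast_\a}$ values are still computed with (\ref{a-duality}) over the full $E$; expanding and using $|\a_T|+|\a_{(E-S)-T}|=|\a_{E-S}|$ does make everything telescope to $r(S\cup T)-r(S)$. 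The only thing worth adding is the (one-line) check that $P^{\ast_\a}$ is itself an $\a$-polymatroid, i.e.\ $r^{\ast_\a}(\{x\}) = r(E-\{x\})+\a_x - r(E) \le \a_x$, which is what licenses applying $\ast_\a$ a second time in items (2) and (4).
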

  
\begin{defn}
Let $P=(E,r)$ be an $\a$-polymatroid.  The {\bf $\a$-dual} of $P$ is $P^{\ast_\a}= (E, r^{\ast_\a}),$ where $r^{\ast_\a}$ is defined by (\ref{a-duality}). 
\end{defn}
With respect to  $\a[1]$-duality, (\ref{a-duality})  coincides  with the usual rank function for matroid duality.   Observe that if $M$ is a  matroid, $P$ is a polymatroid constructed from a partition $\{S_1, \dots, S_n\}$ of $E(M),$ and $\a_i = |S_i|,$ then $P^{\ast_\a}$ is constructed from $M^\ast$ using the same partition.    When applied to $k$-polymatroids, $\a[k]$-duality  is usually called $k$-duality and has appeared before.   See, for instance, \cite{Whitt3}.  

 Until further notice, we assume that $\Gamma$ is an abelian group. For the moment, we do not assume $\Gamma$ is finite.   Let us recall the usual definitions  for nowhere-zero $\Gamma$-flows on an oriented graph.  A $\Gamma$-flow on an oriented graph $G$ is a function $\phi:E(G) \to \Gamma$ such that the inflow equals the outflow at each vertex.  A $\Gamma$-flow is nowhere-zero if for all $x \in E(G),~\phi(x) \neq 0.$   In order to define $\Gamma$-flows on a hypergraph $H,$ we use the usual bipartite graph associated to a hypergraph.  

Given a hypergraph $H$ we form a bipartite graph $\BG(H)$ whose bipartition consists of the vertices of $H$ in one block and the hyperedges of $H$ in the other block.  The number of edges between vertex $v$ and hyperedge $x$ is equal to the multiplicity of $v$ in $x.$  All edges are oriented toward the vertices.  See Figure \ref{BG} for $\BG(H)$ when $H$ is the hypergraph from Example \ref{the example}.  A {\bf $\Gamma$-flow on $H$} is a $\Gamma$-flow on $\BG(H).$  A {\bf nowhere-zero $\Gamma$-flow on $H$} is a $\Gamma$-flow on $H$ such that for all hyperedges $x$ there exists at least one edge in $\BG(H)$ whose  head is $x$ and whose value is not zero.    
When $G$ is an oriented graph,  there is a bijection between  nowhere-zero $\Gamma$-flows on the graph $G$ and nowhere-zero $\Gamma$-flows on the hypergraph $H(G).$   Given a nowhere-zero $\Gamma$-flow $\phi:E(G) \to \Gamma$  on the graph $G,$ and an oriented edge $x=(v,w)$ of $G,$ define $\psi:E(\BG(H(G))) \to \Gamma$ by $\psi((v,x)) =- \phi(x)$ and $\psi((w,x)) = \phi(x).$  Then $\phi \leftrightarrow \psi$ is such a bijection.  Nowhere-zero $\Gamma$-flows on hypergraphs share another property with their graphic counterparts.

\begin{figure}[h]
\centering
  \begin{tikzpicture}
    \node[main node, fill=cbblue, label={west: $x_1$}] [minimum size=0.2cm] (1) {} ;
    \node[main node, fill=black, label={east: $a$}] [minimum size=.2cm] (2) [right = 5cm of 1] {};
    \node[main node, fill=cbgreen, label={west: $x_2$} ] [minimum size=.2cm] (3) [below = 3cm of 1] {};
    \node[main node, fill=black, label={east: $b$}] [minimum size=.2cm] (4) [below = 3cm of 2] {};
    \node[main node, fill=cbpink, label={west: $x_3$} ] [minimum size=.2cm] (5) [below = 3cm of 3] {};
    \node[main node, fill=black, label={east: $c$}] [minimum size=.2cm] (6) [below = 3cm of 4] {};
    \node[main node, fill=cbgold, label={west: $x_4$} ] [minimum size=.2cm] (7) [below = 3cm of 5] {};
    \node[main node, fill=black, label={east: $d$}] [minimum size=.2cm] (8) [below = 3cm of 6] {};

    \path[draw, ultra thick, cbblue, every node/.style={sloped,allow upside down}]
    (1) edge node {\tikz \draw[-triangle 90] (0,0) -- +(.1,0);} (2) 
    (1) edge node {\tikz \draw[-triangle 90] (0,0) -- +(3,0);} (4)    
    (1) edge node {\tikz \draw[-triangle 90] (0,0) -- +(6,0);} (6);
    \path[draw, ultra thick, cbgreen, every node/.style={sloped,allow upside down}]
    (3) edge node {\tikz \draw[-triangle 90] (0,0) -- +(2,0);} (2)
    (3) edge node {\tikz \draw[-triangle 90] (0,0) -- +(2,0);} (4)
    (3) edge node {\tikz \draw[-triangle 90] (0,0) -- +(5,0);} (8);
    \path[draw, ultra thick, out=20, in=165, cbgreen, every node/.style={sloped,allow upside down}]
    (3) edge node {\tikz \draw[-triangle 90] (0,0) -- +(.1,0);} (4);
    \path[draw, ultra thick, cbpink, every node/.style={sloped,allow upside down}]
    (5) edge node {\tikz \draw[-triangle 90] (0,0) -- +(5,0);} (2)
    (5) edge node {\tikz \draw[-triangle 90] (0,0) -- +(3,0);} (4);
    \path[draw, ultra thick, in=195, cbpink, every node/.style={sloped,allow upside down}]
    (5) edge node {\tikz \draw[-triangle 90] (0,0) -- +(.1,0);} (4);
    \path[draw, ultra thick, out=25, in=235, cbpink, every node/.style={sloped,allow upside down}]
    (5) edge node {\tikz \draw[-triangle 90] (0,0) -- +(.1,0);} (4);    
    \path[draw, ultra thick, every node/.style={sloped,allow upside down}, cbgold]
    (7) edge node {\tikz \draw[-triangle 90] (0,0) -- +(4,0);} (6)
    (7) edge node {\tikz \draw[-triangle 90] (0,0) -- +(3,0);} (8);
    
  \end{tikzpicture}
  \caption{$\BG(H)$}  \label{BG}
\label{BG(H)}
\end{figure}

\begin{prop}
A hypergraph has a nowhere-zero $\Z/m\Z$-flow if and only if it has a nowhere-zero $\Z$-flow $\phi$  with the all values of $\phi$ in $\{-(m-1), \dots, m-1\}.$  
\end{prop}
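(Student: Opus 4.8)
### Proof Proposal

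\medskip

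The plan is to reduce the statement to the classical fact that a graph has a nowhere-zero $\Z/m\Z$-flow if and only if it has a nowhere-zero integer flow with all values bounded in absolute value by $m-1$ --- the graph-theoretic result of Tutte --- applied to the bipartite graph $\BG(H)$. Recall that by definition a $\Gamma$-flow on $H$ is exactly a $\Gamma$-flow on the oriented graph $\BG(H)$, and a \emph{nowhere-zero} $\Gamma$-flow on $H$ is a $\Gamma$-flow on $\BG(H)$ such that for every hyperedge vertex $x$ of $\BG(H)$ at least one edge directed into $x$ carries a nonzero value. So the first step is to make precise the dictionary: flows on $H$ are certain flows on $\BG(H)$, and the ``nowhere-zero on $H$'' condition is weaker than ``nowhere-zero on $\BG(H)$'' --- it only constrains the edges incident to the hyperedge-side of the bipartition.

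\medskip

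The forward direction is the easy one. Given a nowhere-zero $\Z/m\Z$-flow $\phi$ on $H$, i.e.\ a $\Z/m\Z$-flow on $\BG(H)$, I would lift each value $\phi(e) \in \Z/m\Z$ to its unique representative in $\{0,1,\dots,m-1\}$, but this naive lift need not satisfy conservation over $\Z$. Instead I invoke the standard argument: the $\Z/m\Z$-flow $\phi$ on $\BG(H)$ extends to a $\Z$-valued function whose boundary is a multiple of $m$ at each vertex, and then --- using that $\BG(H)$ is connected after choosing a spanning tree, or more directly by Tutte's theorem on the interval $\{-(m-1),\dots,m-1\}$ --- one produces a genuine $\Z$-flow $\tilde\phi$ on $\BG(H)$ with $\tilde\phi \equiv \phi \pmod m$ and $\tilde\phi(e) \in \{-(m-1),\dots,m-1\}$ for every edge $e$. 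Since $\tilde\phi(e) \equiv \phi(e) \pmod m$ and $\phi(e) \neq 0$ in $\Z/m\Z$ whenever $e$ is required to be nonzero, $\tilde\phi(e) \neq 0$ for those edges; hence $\tilde\phi$ is a nowhere-zero $\Z$-flow on $H$ with values in $\{-(m-1),\dots,m-1\}$.

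\medskip

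The converse is immediate: given a nowhere-zero $\Z$-flow $\phi$ on $H$ with values in $\{-(m-1),\dots,m-1\}$, reduce modulo $m$. Conservation is preserved since reduction $\Z \to \Z/m\Z$ is a ring homomorphism and $\Gamma^{E(\BG(H))}$-boundary commutes with it. For each hyperedge $x$ of $H$ there is an edge $e$ into $x$ with $\phi(e) \neq 0$; since $-(m-1) \le \phi(e) \le m-1$ and $\phi(e)\neq 0$, we have $\phi(e) \not\equiv 0 \pmod m$, so the reduced flow is nonzero on that edge. Thus it is a nowhere-zero $\Z/m\Z$-flow on $H$.

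\medskip

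The main obstacle is the forward direction: producing the bounded integer lift. The cleanest route is to cite Tutte's theorem for graphs directly for $\BG(H)$, noting only that a $\Z/m\Z$-flow on $\BG(H)$ is in particular a $\Z/m\Z$-flow on the underlying graph, and that ``nowhere-zero on $H$'' is implied coordinatewise by the conclusion of Tutte's theorem restricted to the edges we care about --- so no extra work is needed to handle the vertex-side edges of the bipartition, which may legitimately carry zero. One subtlety worth a sentence: Tutte's classical statement is about nowhere-zero flows on \emph{all} edges, so I should phrase the reduction so that the bipartite graph we feed to Tutte's machine is exactly $\BG(H)$ but we only \emph{require} (and only \emph{conclude}) nonzero values on the hyperedge-incident edges; since the conclusion of the standard bounded-lift argument gives a $\Z$-flow congruent mod $m$ to the given $\Z/m\Z$-flow with values in the stated interval, and zero values simply stay within the interval, this causes no difficulty.
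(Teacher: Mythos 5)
Your proof is correct, but it takes a slightly different route from the paper's. The paper first \emph{deletes} every edge of $\BG(H)$ that the given $\Z/m\Z$-flow assigns $0$; on the resulting subgraph the flow is nowhere-zero in the ordinary graph sense, so one can directly apply the literal statement of the classical fact cited (a graph has a nowhere-zero $\Z/m\Z$-flow iff it has a nowhere-zero $\Z$-flow with values in $\{-(m-1),\dots,m-1\}$), and then extend the resulting integer flow by $0$ on the deleted edges. You instead invoke the stronger congruence-preserving lifting lemma --- that \emph{any} $\Z/m\Z$-flow on a graph lifts to a $\Z$-flow congruent to it mod $m$ with values in $\{-(m-1),\dots,m-1\}$ --- applied to all of $\BG(H)$ at once, and then read off that the lifted flow is nonzero exactly where the original was. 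Both are legitimate: the lemma you use is standard (it is the engine behind the proof of the Tutte equivalence), though it is not literally the equivalence the paper cites, so if you wrote this up you would need a reference to the lifting form rather than the nowhere-zero form. Your version avoids the deletion-and-reinsertion surgery at the cost of a slightly stronger black box; the paper's version uses the weaker black box at the cost of the surgery. One small slip: the aside ``using that $\BG(H)$ is connected after choosing a spanning tree'' does not parse (a spanning tree doesn't change connectivity), but it doesn't affect the argument since you also give the direct citation. The converse direction matches the paper's.
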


\begin{proof}
Given a nowhere-zero $\Z/m\Z$-flow on $H,$ remove all edges of $\BG(H)$ assigned $0$  and apply the same fact for graphs to see that $H$ has a nowhere-zero $\Z$-flow all of whose values are in $\{-(m-1), \dots, m-1\}$.   See, for instance, \cite[Proposition 6.3.7]{BO}.   Conversely, given a nowhere-zero flow on $H$ with values in $\{-(m-1), \dots, m-1\},$ reduce modulo $m$ to get a nowhere-zero $\Z/m\Z$-flow on $H.$
\end{proof}

One way to visualize $\Gamma$-flows on $H$ is to picture the hyperedges as hubs and the vertices as customers. For a customer $v$ and hub $x,$  the value of $\phi(v, (v,x))$ is the value of whatever the customer is sending the hub. In a $\Gamma$-flow the total going through the hub and the total being sent by each customer to all of the hubs is zero.  A nowhere-zero $\Gamma$-flow has something nonzero going through every hub.  From this point of view, flows treat vertices and hyperedges in an identical fashion.  The nowhere-zero condition only applies to the hyperedges (hubs).  

\begin{thm} \label{hypergraph flows}
Let $H$ be a hypergraph and $\Gamma$ a finite abelian group.  Let $\a \in (\R_{\ge 0})^{E(H)}$ be defined by $\a_x = |x|-1.$  Then the number of nowhere-zero $\Gamma$-flows on $H$ is $\mathlarger{\chi}_{P(H)^{\ast_\a}}(|\Gamma|).$
\end{thm}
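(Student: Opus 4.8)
The plan is to exhibit a $\Gamma$-realization of the $\a$-dual $P(H)^{\ast_\a}$ by the group of $\Gamma$-flows on $\SG(H)$, apply the group Crapo--Rota theorem (Theorem \ref{group Crapo-Rota}) to that realization with the tuple length there equal to $1$, and then match the resulting count with the nowhere-zero $\Gamma$-flows on $H$ by transporting flows along the natural collapse from $\BG(H)$ onto $\SG(H)$.

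First I would set up the dual realization. Since $\a_x=|x|-1=|E(x)|$ and $r_H(\{x\})=r_{A(H)}(E(x))\le|E(x)|$, the polymatroid $P(H)$ is an $\a$-polymatroid, so $P(H)^{\ast_\a}$ is defined. Recall that $A(H)$ is the signed incidence matrix of the oriented graph $\SG(H)$, hence totally unimodular, and that $P(H)$ is the polymatroid constructed from $M[A(H)]$ and the partition $\{E(x_1),\dots,E(x_n)\}$ of its columns; by the observation following Proposition \ref{a-duality prop}, $P(H)^{\ast_\a}$ is then the polymatroid constructed from $M[A(H)]^\ast$ and the same partition. Regarding $\Gamma$ as a $\Z$-module, let $\K_0\le\Gamma^{E(\SG(H))}$ be the group of $\Gamma$-flows on $\SG(H)$, that is, $\{f:A(H)f=0\}$. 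Because $A(H)$ is totally unimodular, the argument used for the Claim inside the proof of Theorem \ref{hypergraph coloring} --- now applied to a totally unimodular matrix $B$ whose rows span the cycle space of $\SG(H)$ over $\Q$ (for instance the fundamental cycles of a spanning forest), after first checking that the rows of $B$ generate $\K_0$ --- gives $P(\K_0)\cong M[B]=M[A(H)]^\ast$. Reblocking $\K_0$ along $\{E(x_1),\dots,E(x_n)\}$, exactly as in that proof, yields a subgroup $\K\le\prod_{i=1}^n\Gamma^{|x_i|-1}$ for which $P(\K)$ is the polymatroid constructed from $M[A(H)]^\ast$ and this partition; hence $P(\K)\cong P(H)^{\ast_\a}$.

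Next I would apply Theorem \ref{group Crapo-Rota} to the realization $\K$, with tuple length $1$ and $b=|\Gamma|$, obtaining $\mathlarger{\chi}_{P(H)^{\ast_\a}}(|\Gamma|)=|\{h\in\K:I(h)=\emptyset\}|$. Unwinding the reblocking, the right-hand side counts the $\Gamma$-flows $f$ on $\SG(H)$ that are not identically $1_\Gamma$ on $E(x_i)$ for any $i$. It then remains to biject these with the nowhere-zero $\Gamma$-flows on $H$. Collapsing, for each hyperedge $x$, the edge of $\BG(H)$ joining the node $x$ to (a copy of) the chosen vertex $v_x$ turns $\BG(H)$ into a graph isomorphic to $\SG(H)$, carrying the other $|x|-1$ edges at the node $x$ onto the $|x|-1$ edges of $E(x)$, an edge to a repeated copy of $v_x$ becoming a loop at $v_x$. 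I would check that sending a $\Gamma$-flow $\psi$ on $\BG(H)$ to its values on the non-collapsed edges is a bijection onto the $\Gamma$-flows on $\SG(H)$: conservation at the node $x$ forces the value of $\psi$ on the collapsed edge to be the negative of the sum of its values on $E(x)$, and, after this substitution, the vertex conservation laws on the two sides translate into each other in both directions. Under this bijection, $\psi$ being nonzero on some edge of $\BG(H)$ incident to each node $x$ is equivalent to its image being not identically $1_\Gamma$ on each $E(x)$, since the collapsed-edge value at $x$ vanishes whenever all the values on $E(x)$ do. Combining, the number of nowhere-zero $\Gamma$-flows on $H$ equals $\mathlarger{\chi}_{P(H)^{\ast_\a}}(|\Gamma|)$.

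The step I expect to be the main obstacle is the last one: keeping the orientation and sign conventions consistent through the collapse, in particular the treatment of hyperedges with repeated vertices (loops of $\SG(H)$ versus parallel edges of $\BG(H)$) and of degenerate hyperedges with $|x|=1$ (where both sides of the identity are zero), and verifying in detail that the substitution genuinely transports the vertex conservation laws both ways. A secondary point needing care is that the total unimodularity input is used over an arbitrary finite abelian group $\Gamma$ rather than over a prime field: one relies on the fact that a totally unimodular matrix has trivial Smith normal form, so that the orders of the relevant projections are insensitive to reduction modulo the prime powers dividing $|\Gamma|$.
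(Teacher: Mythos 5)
Your proof is correct and follows essentially the same route as the paper's: realize $P(H)^{\ast_\a}$ by the reblocking of $\ker(A(H)\cdot\,)\le\Gamma^{E(\SG(H))}$ along the partition $\{E(x_1),\dots,E(x_n)\}$, invoke Theorem~\ref{group Crapo-Rota} with $k=1$, and transfer the resulting count to nowhere-zero $\Gamma$-flows on $\BG(H)$. The only presentational divergence is in establishing $P(\KK')\cong M[A(H)]^\ast$: you propose to reuse the Claim from the proof of Theorem~\ref{hypergraph coloring} applied to a totally unimodular cycle matrix $B$ whose rows generate the kernel, whereas the paper works directly with an integral row-echelon form of $A(H)$ to compute $|\pr_T(\ker\tau)|$; both hinge on the same total-unimodularity/trivial-Smith-form input, and your edge-contraction description of the $\BG(H)\leftrightarrow\SG(H)$ flow bijection is exactly the map $\alpha$ that the paper writes out in coordinates.
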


\noindent If $H=H(G),$ then $P(H)$ is the cycle matroid of $G$ and $\a=\a[1]$ induces  standard matroid duality.  Therefore, in this case, Theorem \ref{hypergraph flows} is the classical flow polynomial formula for nowhere-zero $\Gamma$-flows on $G$ \cite{BO}.\\  

\begin{proof}
We follow the notation of the proof of Theorem \ref{hypergraph coloring}.  So $\G = \Gamma^k$ and $\G' = \displaystyle\prod^n_{i=1} \Gamma^{|x_i|-1}.$ This time we think of elements $\gamma = (\gamma_1, \dots, \gamma_k)$ of $\G$ as column vectors and define $\tau: \Gamma^k \to \Gamma^m$   by $\tau(\gamma) = A \cdot \gamma.$   Now we let $\KK' = f(\ker \tau) \subseteq \G',$ where $f$ is  the same isomorphism between $\Gamma^k$ and $\G'$ as before.  By Theorem \ref{group Crapo-Rota}, the number of $h \in \KK'$ such that $I(h) = \emptyset$ is $\mathlarger{\chi}_{P(\KK')}(b)= \mathlarger{\chi}_{P(\KK')}(|\Gamma|).$  To finish the proof we establish the following three claims:
\begin{enumerate}
\item There is a bijection $\alpha$ from $\KK'$ to $\Gamma$-flows on $H.$ 
\item  When restricted to $\FF = \{h \in \KK': I(h) = \emptyset\}, ~\alpha$ is a bijection between $\FF$ and nowhere-zero $\Gamma$-flows on $H.$ 
\item $P(\KK') = P(H)^{\ast_\a}.$
\end{enumerate}
Let $\gamma=((\gamma_{1,1}, \dots, \gamma_{1,|x_1|-1}), \dots, (\gamma_{n,1}, \dots, \gamma_{n, |x_n|-1})) \in \G'.$  For $x_j \in E$ write down the vertices in $x_j= (w_1, \dots, w_{|x_j|-1}, w_{|x_j|})$ in order so that $(\{v_{x_j}, w_1\}, \{v_{x_j}, w_2\}, \dots, \{v_{x_j}, w_{|x_j|-1}\})$ is the same ordering of the edges in $E(x_j)$ as was used to define $A$ and $w_{|x_j|} = v_{x_j}.$   We define a function $\alpha(\gamma):E(\BG(H)) \to \Gamma$  as follows.  
       \begin{equation} \label{F} \alpha(\gamma)(w_i, x_j) = \begin{cases} \gamma_{j,i }~,& i < |x_j| \\ - \displaystyle\sum^{|x_j|-1}_{l=1} \gamma_{j,l}~, & i=|x_j| \end{cases}.
       \end{equation}
 The definition of $\alpha$ implies that for $\alpha(\gamma)$ the sum of outgoing edges to a hyperedge $x$ in $\BG$ is zero.  The $\gamma \in \KK'$ are precisely those such that $\alpha(\gamma)$ is a $\Gamma$-flow on $H$ establishing the first claim.     The second claim follows from the definition of $I(h).$ It remains to prove that  $P(\KK') = P(H)^{\ast_\a}.$

First consider $M[A]$.  A result that goes back to Whitney \cite{Wh} is that the if $\KK$ is the kernel of multiplication by $A$ as a linear transformation of real vector spaces, then   $M[\KK] \cong M[A]^\ast.$  As in the proof of Theorem \ref{hypergraph coloring}, we identify the columns of $A$ with $[k]$ and let $T$ be a subset of the columns of $A.$ Then $r^\ast_A(T),$ the rank of $T$ in $M[A]^\ast,$ is the maximum number of free variables in $T$ over all all matrices row equivalent to $A.$   Let $A'$ be a  matrix row equivalent to   $A$ which maximizes  the number free variables in $T.$ We can choose $A'$  to be the identity matrix when restricted to the pivot columns. Since $A$ is totally unimodular, the rows of $A'$ are an integral linear combination of the rows of $A.$   Therefore, the elements of $\ker \tau$ consist of arbitrary choices in $\Gamma$ for each coordinate corresponding to a free variable, and all coordinates associated  with pivot variables are determined by those choices.  Hence, $|\pr_T(\ker \tau)|=|\Gamma|^{r^\ast_A(T)}$ and $P(\KK)$ equals $M[A]^\ast.$  Since $P(\KK')$ is constructed from $P(\KK)$ and the partition $\{E(x_1), \dots, E(x_n)\}$ and $P(H)^{\ast_\a}$ is constructed from $M[A]^\ast$ with the same partition, they are equal.

\end{proof}

Three special situations deserve mention.  One, if $G$ is a graph and $H=H(G)$, then Theorem \ref{hypergraph coloring} and  Theorem \ref{hypergraph flows}  are the usual coloring-flow duality for graphs.  Two, if $H$ is a uniform $k$-hypergraph, then $\a=\a[k]$ and $\ast_\a$ is $k$-polymatroid duality.  Lastly, if the edges of $H$ are subsets of $V$ (as opposed to multisets), then $|x|-1= r_{P(H)}(\{x\}).$  Basing polymatroid duality on the rank of the singletons has been considered before.  See, for instance, \cite{CMSW} and \cite{Whitt3}.

\section{Duality for polymatroids realized over a group}  \label{duality for realized polymatroids}
One of the  motivations for Whitney's original introduction of matroids \cite{Wh} was to answer the question, ``What is the dual of a nonplanar graph?''    He offered two solutions.  One, let $M[G]$ be the cycle matroid of the graph $G$.  Then $M[G]^\ast$ serves as the dual of $G.$ The primary justification for this point of view is that for embedded planar graphs, $M[G]^\ast$ is isomorphic to $M[G^\ast],$ where $G^\ast$ is the geometric planar dual of $G.$ 

In order to describe Whitney's second answer, choose an orientation for $G$ and let $A$ be the adjacency matrix for $G,$ the columns labelled by the edges and the rows by the vertices.  Now $M[A]$ is isomorphic to $M[G].$ Then $R(A)^\perp,$ the orthogonal complement of the row space of $A,$ serves as a dual for $G.$ This  point of view is validated by the fact that  $M[R(A)^\perp]$ is isomorphic to $M[A]^\ast$ which itself is isomorphic to  $M[G]^\ast.$

In this section we will closely mirror these ideas in the search for a dual of $\H \le \G.$ Throughout this section $\G = \displaystyle\prod_{x \in E} \Gamma_x,$ where each $\Gamma_x$ is a finite group (not necessarily equal), and $\H$ is a subgroup of $\G.$  As before, we fix $b >1.$  In addition, we set  $\a \in (\R_{\ge 0})^E,~\a_x = \log_b |\Gamma_x|.$ We will be comparing our constructions with the special situation when for all $x \in E,~\Gamma_x = \Z/p\Z,~p$ a fixed prime. In this case, $P(\H)$ is the matroid previously denoted by $M[\H],$ where $\H$ is a subspace of the vector space $(\Z/p\Z)^E.$

We have already seen the analog of Whitney's first solution.   The polymatroid $P(\H,b)^{\ast_\a}$ is a polymatroid dual of $P(\H,b).$ When $\Gamma_x = \Z/p\Z$ for all $x,~P(\H,b)^{\ast_\a} = M[\H]^\ast,$ the usual matroid dual of $M[\H].$  Our analog of Whitney's second solution to ``What is the dual of a nonplanar graph'' involves the (complex) representation theory of finite groups.  

We will use the following notation to describe a candidate for a representation-based dual to $\H \le \G.$  For a finite set $U,~\C[U]$ is the $\C$-vector space with  basis $\{w_u: u \in U\}.$  When $\Gamma$ is a finite group, $\C[\Gamma]$ will also stand for the  left-regular representation of $\Gamma.$ Specifically, if $\displaystyle\sum_{\gamma \in \Gamma} c_\gamma~w_\gamma \in \C[\Gamma]$ and $\gamma' \in \Gamma,$ then $\gamma' \cdot \displaystyle\sum_{\gamma \in \Gamma} c_\gamma~w_\gamma = \displaystyle\sum_{\gamma \in \Gamma} c_\gamma~w_{\gamma' \cdot \gamma}.$   The set of (complex) irreducible representations of $\Gamma$ is $\widehat{\Gamma.}$  The irreducible representations of $\widehat{\G}$ are the representations of the form $\rho = \displaystyle\bigotimes_{x \in E} \rho_x,~\rho_x \in \widehat{\Gamma_x}$ \cite{Se}.

Depending on the circumstances, the representation-based dual of $\H \le \G$ is one of the following three cryptomorphic objects.

\begin{defn} \ \\ 
  \begin{itemize}
     \item $\Rep_1(\H)$: The subrepresentation  of $\C[\G]$ consisting of all elements whose coefficients are constant on the right cosets of $\H.$ Equivalently, $$\Rep_1(\H) = \bigg\{ \displaystyle\sum_{g \in \G} c_g w_g \in \C[\G]: \mbox{ for all } g \in \G,~h \in \H, c_g = c_{g \cdot h}\bigg\}.$$
     \item $\Rep_2(\H)$: The permutation representation on $\C[\G/\H]$ induced from the $\G$-action on right-cosets $\G/\H,~ g' \cdot (g \H) = (g' \cdot g) \H.$
     \item $\Rep(\H)$: The multiset of irreducible representations in $\widehat{\G}$ whose direct sum is equivalent to $\Rep_1(\H)$ and $\Rep_2(\H).$ 
  \end{itemize}
\end{defn}

\noindent In representation theory $\Rep_1(\H)$ and  $\Rep_2(\H)$ are known as the  representation of $\G$ induced by the  trivial representation of $\H.$ 

\begin{example} \label{diagonal}
 Let $\G = S_3 \times S_3, \H = \{(\sigma, \sigma): \sigma \in S_3\}, $ the diagonal embedding of $S_3.$   A character computation applied to the permutation representation $\Rep_2(\H)$ shows that $\Rep(\H) = \{1 \otimes 1,   s \otimes s, t \otimes t\},$ where $1, s$ and $t$ are respectively the trivial, sign and two-dimensional irreducible representations of $S_3.$ 
\end{example}

\begin{prop} \label{normal subgroups}
 Let $\H$ be a normal subgroup of $\G$ and $\pr:\G \to \G/\H$ be the quotient map.  Mapping $\phi \in \widehat{\G/\H}$ to $\Rep(\H)$ by $\phi \to \phi \circ \pr$ is a bijection between the multisets $\widehat{\G/\H}$ and $\Rep(\H).$   Equivalently, $\rho \in \widehat{\G}$ is in $\Rep(H)$ if and only if $\H \subseteq \ker \rho$, and, if $\rho \in \Rep(\H), \rho = \phi \circ \pi,$ then the multiplicity of $\rho$ in $\Rep(\H)$ equals the multiplicity of $\phi$ in $\C[\G/\H].$
  \end{prop}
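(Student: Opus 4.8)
The plan is to reduce everything to the standard representation-theoretic fact that the induced representation $\mathrm{Ind}_\H^\G(\mathbf{1})$ of $\G$ obtained by inducing the trivial representation of $\H$ is, when $\H$ is normal, exactly the pullback along $\pr\colon \G \to \G/\H$ of the regular representation $\C[\G/\H]$. Concretely, $\Rep_2(\H) = \C[\G/\H]$ as a $\G$-set module, and when $\H \trianglelefteq \G$ the quotient $\G/\H$ is a group, so its $\G$-action (by left translation through $\pr$) is literally the pullback of the left-regular action of $\G/\H$ on itself. Hence $\Rep_2(\H) \cong \pr^\ast\big(\C[\G/\H]\big)$ as $\G$-representations.

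First I would decompose the regular representation of the finite group $\G/\H$ as $\C[\G/\H] \cong \bigoplus_{\phi \in \widehat{\G/\H}} \phi^{\oplus \dim \phi}$, i.e.\ each irreducible $\phi$ of $\G/\H$ appears with multiplicity equal to $\dim\phi$. Pulling back along $\pr$ is an exact functor that sends irreducibles to irreducibles: $\phi \circ \pr \in \widehat{\G}$ for each $\phi \in \widehat{\G/\H}$, and distinct $\phi$ give inequivalent $\phi\circ\pr$ because one recovers $\phi$ from $\phi\circ\pr$ by noting $\H \subseteq \ker(\phi\circ\pr)$ and passing to the quotient. Therefore $\Rep_2(\H) \cong \bigoplus_{\phi \in \widehat{\G/\H}} (\phi\circ\pr)^{\oplus \dim\phi}$, which by definition of $\Rep(\H)$ as the multiset of irreducible constituents says exactly that $\rho \mapsto$ (the $\phi$ with $\rho = \phi\circ\pr$) is a multiset bijection $\Rep(\H) \to \widehat{\G/\H}$, with multiplicities matching those in $\C[\G/\H]$.

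For the "equivalently" clause I would argue the $\ker$ characterization directly: if $\rho \in \widehat{\G}$ satisfies $\H \subseteq \ker\rho$, then $\rho$ factors as $\phi\circ\pr$ for a unique $\phi \in \widehat{\G/\H}$ (universal property of the quotient; irreducibility of $\phi$ is inherited since $\pr$ is surjective so $\rho$ and $\phi$ have the same image and hence the same invariant subspaces). Conversely every $\phi\circ\pr$ kills $\H$. Combined with the multiplicity statement of the previous paragraph, an irreducible $\rho$ lies in the multiset $\Rep(\H)$ iff $\H \subseteq \ker\rho$, and then $\mathrm{mult}_{\Rep(\H)}(\rho) = \dim\phi = \mathrm{mult}_{\C[\G/\H]}(\phi)$.

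I do not expect a serious obstacle here; the only thing requiring a little care is the cryptomorphism invoked in the definition, namely that $\Rep_1(\H) \cong \Rep_2(\H)$ as $\G$-representations, which identifies $\Rep(\H)$ unambiguously as a multiset of irreducibles. Since the excerpt already records (in the sentence following the definition) that both are the representation of $\G$ induced from the trivial representation of $\H$, I would simply cite that and work with $\Rep_2(\H) = \C[\G/\H]$ throughout, taking $\Rep(\H)$ to be its isotypic decomposition. The mildly delicate point is making sure "multiplicity in $\C[\G/\H]$" is read as multiplicity of the \emph{irreducible} $\phi$ in the regular representation of the group $\G/\H$ (equal to $\dim\phi$), not as something about the coset space for a nonnormal $\H$; normality of $\H$ is precisely what makes $\G/\H$ a group and this reading correct, and that is where the hypothesis is used.
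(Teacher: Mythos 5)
Your proof is correct and follows essentially the same route as the paper: both identify $\Rep_2(\H)$ with the regular representation of the quotient group $\G/\H$ pulled back along $\pr$, decompose, and observe that pullback of irreducibles by the surjection $\pr$ is injective. The paper's version is terser, replacing the explicit isotypic decomposition $\C[\G/\H]\cong\bigoplus_\phi \phi^{\oplus\dim\phi}$ with a dimension count ($\dim\Rep_2(\H)=|\G/\H|$), but the underlying argument is the same.
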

  
 \begin{proof}
 From the point of view of $\Rep_2(\H),$ we see that  if $\phi \in \widehat{\G/\H},$ then  $\phi \circ \pr$ is in $\Rep(\H).$ Since the $\dim \Rep_2(\H) = |\G/\H|$ and $\phi \to \phi \circ \pr$ as a map from $\widehat{\G/\H} \to \Rep(\H)$ is injective, $\phi \circ \pr$ accounts for all of $\Rep(\H).$  
 \end{proof} 
 
 \begin{example}
 $\G=S_3 \times S_3$ and  $\H = \{(\sigma, \tau): s(\sigma)=s(\tau)\},$ where as in Example \ref{diagonal}, $s$ is the sign representation of $S_3.$ Then $\H$ is contained in the kernel of $1 \otimes 1$ and equals the kernel of $s \otimes s.$  Since $|\G/\H| = 2, \Rep(\H) = \{1 \otimes 1, s \otimes s\}.$
 
 \end{example}
 
 When $\Gamma_x$ is a fixed abelian group for all $x \in E,$ the idea that the subgroup of representations in $\widehat{\G}$ whose kernels contain $\H$ could be used as a dual code to $\H$ has been part of the coding literature since \cite{Mc}.   In Section \ref{Group codes} we will briefly explore how $\Rep(\H)$ acts as a dual code for $\H$ whether or not $\H$ is abelian.  
 
 \begin{example}
 Suppose $\G = (\Z/p\Z)^E,~E=[n],$ and $p$ is a prime.  As in Example \ref{Z/pZ}, a subgroup $\H$ of $\G$ is also a subspace of $~\G$ viewed as a vector space.  How is $\Rep(\H)$ related to $\H^\perp?$ The irreducible representations of $\Z/p\Z$ are $\{\rho_0, \rho_1, \dots, \rho_{p-1}\}$ with $\rho_j(\bar{k}) = e^{2\pi i (jk/p)}.$  The function $f: \widehat{\Z/p\Z} \to \Z/p\Z,~f(\rho_j) = \bar{j}$ is an isomorphism of groups,  the group operation on $\widehat{\Z/p\Z}$ being tensor product.   Let $F: \widehat{\G} \to \G$ be the  isomorphism induced by $f.$ Suppose $\rho \in \Rep(\H).$ Write $F(\rho) = (\bar{m}_1, \dots, \bar{m}_n)$ and for $h \in \H$ write $h=(\bar{h}_1, \dots, \bar{h}_n).$ Then $\rho \in \Rep(\H)$ if and only if $\rho(h) =1$ for all $h \in \H.$ This holds if and only if $\displaystyle\prod^n_{k=1} e^{2 \pi i (m_k h_k/p)} = 1.$ Equivalently,   if and only if $\displaystyle\sum^n_{k=1} \bar{m}_k \bar{h}_k = 0 \mod p.$ This is  Whitney's description of the subspace $\H^\perp$ of $\G$ such that $M[\H]^\ast = M[\H^\perp].$ 
 \end{example}

  The rest of this section shows various ways $\Rep(\H)$ realizes $P(\H)^{\ast_\a}$ in a manner similar to the way $W^\perp$ realizes $M[W]^\ast$ when $W$ is a subspace of $\Field^E$ with $\Field$  a field. In general, the trivial representation in $\widehat{\G}$ will play the role of the $0 \in \Field,$ the tensor product decomposition of representations in $\widehat{\G}$ substitutes for the coordinates of elements of $\Field^E,$ and dimensions of representations will replace counting statistics.  For notational brevity we will denote $P^{\ast_a}(\H)$ by $P^\ast$ and the corresponding rank function as $r^\ast.$ First a few preparatory lemmas.  
  
   A {\bf coloop} of an $\a$-polymatroid is an element $x \in E$ such that $r(E) = r(E-\{x\}) + r(\{x\})$ and $r(\{x\}) = a_x > 0.$   When $\a= \a[1]$ this definition of coloop is the usual one for matroids (and subcardinal polymatroids).  The law of diminishing returns implies that if $x$ is a coloop of an $\a$-polymatroid and $x \notin S,$ then $r(S \cup \{x\}) = r(S) + r(\{x\}).$ We omit the routine proof of the following characterization of the flats of $P^\ast.$
  \begin{lem}
  If $S$ is a subset of an $\a$-polymatroid $P=(E,r),$ then $S$ is a flat of $P^\ast$ if and only if $P \setminus S$ contains no coloops. 
  \end{lem}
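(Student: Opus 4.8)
The plan is to reduce the whole statement to one short marginal-rank computation in $P$ followed by a single application of the law of diminishing returns.

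First I would unravel the definition of a flat: $S$ is a flat of $P^\ast$ precisely when $r^\ast(S\cup\{x\}) > r^\ast(S)$ for every $x\in E\setminus S$. Substituting (\ref{a-duality}) into both ranks, the $|\a_S|$ and $-r(E)$ terms cancel and one is left with
\[ r^\ast(S\cup\{x\}) - r^\ast(S) \;=\; \a_x - \Bigl(r(E\setminus S) - r\bigl((E\setminus S)\setminus\{x\}\bigr)\Bigr), \]
in which the parenthesised quantity is exactly the marginal rank of $x$ in the deletion $P\setminus S$ (whose rank function is $r$ restricted to subsets of $E\setminus S$). Thus $S$ is a flat of $P^\ast$ if and only if, for every $x\in E\setminus S$, the number $\a_x$ strictly exceeds the marginal rank of $x$ in $P\setminus S$.

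Next I would pin down exactly when that inequality can fail. By diminishing returns (P3') applied with $\emptyset\subseteq(E\setminus S)\setminus\{x\}$ and the element $x$, the marginal rank of $x$ in $P\setminus S$ is at most $r(\{x\})$, and $r(\{x\})\le\a_x$ since $P$ is an $\a$-polymatroid. Hence $\a_x$ equals the marginal rank of $x$ in $P\setminus S$ iff this two-step chain collapses to equalities, i.e. iff $\a_x=r(\{x\})=r(E\setminus S)-r\bigl((E\setminus S)\setminus\{x\}\bigr)$; together with $\a_x>0$ this is precisely the defining condition for $x$ to be a coloop of $P\setminus S$. Running this over all $x\in E\setminus S$ then yields: $S$ fails to be a flat of $P^\ast$ iff $P\setminus S$ has a coloop, which is the assertion.

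I do not expect a genuine obstacle here; the one place to be careful is the positivity clause in the definition of coloop, i.e.\ tracking the degenerate case $r(\{x\})=0$ (equivalently $\a_x=0$), so that ``equality throughout the chain'' is matched with ``$x$ is a coloop of $P\setminus S$'' rather than with ``$x$ is a coloop or a rank-zero element of $P\setminus S$''. Everything else is the one-line computation of $r^\ast(S\cup\{x\})-r^\ast(S)$ above plus the single invocation of P3'.
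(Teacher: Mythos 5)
The paper omits the proof of this lemma, calling it ``routine,'' so there is no authorial argument to compare against; your computation is exactly the natural one such a statement suggests, and the main chain of reasoning is correct. You correctly compute
\[
r^{\ast}(S\cup\{x\})-r^{\ast}(S)=\a_x-\bigl(r(E\setminus S)-r((E\setminus S)\setminus\{x\})\bigr),
\]
and then sandwich the marginal rank below $r(\{x\})\le\a_x$ via P3'. Negating the flat condition then comes down to: for some $x\in E\setminus S$ the chain $\a_x\ge r(\{x\})\ge$ (marginal rank) collapses to equalities, which is exactly the coloop condition \emph{modulo} the positivity clause.

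The one thing you flag at the end is in fact a genuine issue, and your parenthetical ``(equivalently $\a_x=0$)'' is not quite right: $\a_x=0$ forces $r(\{x\})=0$ but not conversely (one can have a loop $x$ with $\a_x>0$, and in that case the strict inequality $\a_x>0=$ marginal rank holds and nothing goes wrong). The genuinely problematic case is $\a_x=0$ for some $x\notin S$. There the chain trivially collapses to $0=0=0$, so $S$ fails to be a flat of $P^{\ast}$, yet $x$ is \emph{not} a coloop of $P\setminus S$ because the definition requires $\a_x>0$; if $P\setminus S$ happens to have no other coloops, the lemma's biconditional fails. So your argument, and indeed the lemma as stated, require the standing hypothesis that $\a_x>0$ for every $x\in E\setminus S$ (equivalently, that $\a$ is strictly positive, since any $x$ with $\a_x=0$ is a loop and the statement is about flats). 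This is automatic in all of the paper's uses of the lemma (there $\a_x=\log_b|\Gamma_x|$ with $\Gamma_x$ a nontrivial finite group), so the practical impact is nil, but a careful write-up should state the assumption rather than merely note it as ``one place to be careful.''
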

  
  \noindent For all $\a$-polymatroids $P=(E, r), ~r^\ast (E) = |\a|-r(E).$  
  
  \begin{lem} \label{r* for E}
  $$\log_b \displaystyle\sum_{\rho \in \Rep(\H)} \dim \rho = |\a|-r_\H(E).$$
  \end{lem}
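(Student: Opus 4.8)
The plan is to compute the dimension $\sum_{\rho \in \Rep(\H)} \dim \rho$ directly from the description of $\Rep(\H)$ as the induced representation $\mathrm{Ind}_\H^\G \mathbf{1}$, equivalently $\Rep_2(\H)$, the permutation representation on $\C[\G/\H]$. Since $\Rep(\H)$ is by definition the multiset of irreducibles whose direct sum is equivalent to $\Rep_2(\H)$, we have $\sum_{\rho \in \Rep(\H)} \dim \rho = \dim \Rep_2(\H) = |\G/\H| = |\G|/|\H|$. Taking $\log_b$ of both sides gives $\log_b |\G| - \log_b |\H|$. Now $\log_b |\G| = \sum_{x \in E} \log_b |\Gamma_x| = \sum_{x\in E} \a_x = |\a|$ by the definition of $\a$ fixed at the start of this section, and $\log_b |\H| = r_{\H,b}(E) = r_\H(E)$ by the definition of the rank function in equation (\ref{rank function}) applied to $S = E$ (since $\H_E = \H$). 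Combining, $\log_b \sum_{\rho \in \Rep(\H)} \dim \rho = |\a| - r_\H(E)$, which is exactly the claimed identity, and also matches the value $r^\ast(E)$ recorded in the sentence immediately preceding the lemma.

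The one genuine point that needs care — and which I expect to be the only real obstacle — is the equality $\sum_{\rho\in\Rep(\H)} \dim\rho = \dim\Rep_2(\H)$, i.e.\ that decomposing a representation into irreducibles and summing their dimensions recovers the dimension of the original space. This is immediate from complete reducibility over $\C$: if $V \cong \bigoplus_i V_i$ as $\G$-representations with each $V_i$ irreducible, then $\dim V = \sum_i \dim V_i$, and $\Rep(\H)$ is precisely this multiset $\{V_i\}$ by its definition. The identification $\dim \Rep_2(\H) = |\G/\H|$ is just the fact that $\C[\G/\H]$ has the cosets as a basis.

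Thus the proof is essentially a one-line dimension count once the definitions are unwound; no representation-theoretic input beyond Maschke's theorem (complete reducibility over $\C$) is required, and in fact Proposition \ref{normal subgroups} is not needed here since we never require $\H$ normal. I would write it as: ``By definition $\Rep(\H)$ is the multiset of irreducible constituents of $\Rep_2(\H) = \C[\G/\H]$, so $\sum_{\rho \in \Rep(\H)} \dim \rho = \dim \C[\G/\H] = |\G|/|\H|$. Now apply $\log_b$ and use $\log_b|\G| = \sum_{x \in E}\log_b|\Gamma_x| = |\a|$ together with $\log_b|\H| = r_\H(E)$.''
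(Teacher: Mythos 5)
Your proposal is correct and follows exactly the paper's argument: the paper's proof is the single line $\sum_{\rho \in \Rep(\H)} \dim \rho = \dim \Rep_2(\H) = |\G/\H|$, with the translation to $|\a| - r_\H(E)$ left to the reader, which you have simply made explicit.
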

  \begin{proof}
  By definition $\displaystyle\sum_{\rho \in \Rep(\H)} \dim \rho = \dim \Rep_2(\H) = |\G/\H|.$
  \end{proof}
  
  \begin{defn}
Let $\rho \in \widehat{\G},~\rho = \displaystyle\bigotimes_{x \in E} \rho_x,$ each $\rho_x \in \widehat{\Gamma}_x.$  Then, $$\triv(\rho) = \{x \in E: \rho_x \mbox{ is the trivial representation of } \Gamma_x\}.$$
\end{defn}  

\begin{example}
Suppose $\G= (\Z/p\Z)^n, \H$ is a subgroup of $\G,$ and $F: \Rep(\H) \to \G$ is the isomorphism from $\widehat{\G} \to \G$ restricted to $\Rep(\H).$ Then $\triv(h) = I(F(h)).$
\end{example}

\begin{example}  \label{s, S3}
Let $E=\{x,y\},~\G = \Gamma_x \times \Gamma_y,~\Gamma_x = \Gamma_y = S_3.$  Let 
$$\H=\{((1),(1)), ((1),(123)), ((1),(132)), ((12),(12)), ((12),(13)), ((12),(23))\}.$$
A character calculation shows that 
$$\Rep(\H) = \{1 \otimes 1, s \otimes s, t \otimes 1, t \otimes s\}.$$
 So $\triv(1 \otimes 1) = \{x,y\}, \triv( t \otimes 1) = \{y\}, \triv(s \otimes s) = \triv(t \otimes s) = \emptyset.$ 
\end{example}

We are going to establish $\Rep(\H)$-mirrors of Corollary \ref{I=flat} and Propositions \ref{realizing coatoms},  and \ref{realizing flats}.  The next two lemmas prepare us for those results.  Let $S \subseteq E, \rho \in \widehat{\G_S}, \rho= \displaystyle\bigotimes_{x \in S} \rho_x.$ The {\bf extension of $\rho$ to $\widehat{\G}$} is $\bar{\rho} = \displaystyle\bigotimes_{x \in E} \bar{\rho}_x,$ where  $\bar{\rho}_x = \rho_x$ for all $x \in S,$ and for $x \notin S,~\bar{\rho}_x \mbox{ is the trivial representation of } \Gamma_x.$

\begin{lem} (Extension lemma) 
Let $S \subseteq E.$ Then $\rho$ is in $\Rep(\H_S)$ if and only if $\bar{\rho}$ is in  $\Rep(\H).$
\end{lem}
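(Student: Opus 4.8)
The plan is to reduce both directions of the equivalence to a statement about fixed vectors and then settle that statement by tracking a canonical identification of representation spaces. Recall that $\Rep(\H)$ is by definition the multiset of irreducibles of $\widehat{\G}$ whose direct sum is $\Rep_2(\H)$, the permutation representation $\C[\G/\H] = \mathrm{Ind}^{\G}_{\H}(\mathbf{1}_{\H})$. For $\rho \in \widehat{\G}$ with representation space $V_\rho$, Frobenius reciprocity gives that the multiplicity of $\rho$ in $\Rep(\H)$ equals $\langle \mathrm{Ind}^{\G}_{\H}\mathbf{1}_{\H}, \rho\rangle_{\G} = \langle \mathbf{1}_{\H}, \mathrm{Res}^{\G}_{\H}\rho\rangle_{\H} = \dim V_\rho^{\H}$, the dimension of the space of $\H$-fixed vectors. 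In particular $\rho \in \Rep(\H)$ if and only if $V_\rho^{\H}\neq 0$. Applying the same reasoning inside $\G_S$, $\rho\in \Rep(\H_S)$ if and only if $V_\rho^{\H_S}\neq 0$, where $\H_S = \pr_S(\H)$ acts on $V_\rho$ through the inclusion $\H_S \le \G_S$. So it suffices to produce a natural identification carrying $V_\rho^{\H_S}$ onto $V_{\bar\rho}^{\H}$.

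For this, write $\rho = \bigotimes_{x\in S}\rho_x$ on $V_\rho = \bigotimes_{x\in S}V_{\rho_x}$ and $\bar\rho = \bigotimes_{x\in E}\bar\rho_x$ on $V_{\bar\rho} = \bigotimes_{x\in E}V_{\bar\rho_x}$, where for $x\notin S$ the factor $\bar\rho_x$ is the trivial representation of $\Gamma_x$ on the one-dimensional space $\C$. Tensoring with $1\in\C$ in each slot $x\notin S$ gives a canonical isomorphism $\iota\colon V_\rho \xrightarrow{\sim} V_{\bar\rho}$. For $h\in\H$ the operator $\bar\rho(h) = \bigotimes_{x\in E}\bar\rho_x(h_x)$ acts trivially on the slots $x\notin S$ and as $\rho_x(h_x)$ on the slots $x\in S$, so under $\iota$ it becomes $\rho(\pr_S(h))$. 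Thus the $\H$-action on $V_{\bar\rho}$ factors through $\pr_S\colon \H\to\H_S$ and is conjugate by $\iota$ to the action of the subgroup $\H_S$ on $V_\rho$. Consequently $\iota(V_\rho^{\H_S}) = V_{\bar\rho}^{\H}$, so one of these spaces is nonzero exactly when the other is, and combining with the previous paragraph yields $\rho\in\Rep(\H_S)\iff\bar\rho\in\Rep(\H)$.

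Two small points round this out. First, $\bar\rho$ is a genuine element of $\widehat{\G}$: adjoining trivial one-dimensional factors to an irreducible external tensor product preserves irreducibility (cf.\ \cite{Se}), so both sides of the claimed equivalence refer to honest members of $\widehat{\G_S}$ and $\widehat{\G}$. Second, the same argument gives the refinement that the multiplicity of $\rho$ in $\Rep(\H_S)$ equals the multiplicity of $\bar\rho$ in $\Rep(\H)$, since both are $\dim V_\rho^{\H_S}$; this is the form that will be convenient for the later mirror statements. I do not expect a genuine obstacle here: the only step requiring care is keeping track of the canonical identification $\iota$ and verifying that the $\H$-action on $V_{\bar\rho}$ is literally $\rho\circ\pr_S$, after which the proof is bookkeeping with the definitions of $\Rep$, $\triv$, and $\H_S$.
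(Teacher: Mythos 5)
Your proof is correct, and it takes a somewhat different route from the paper's. The paper argues from the $\Rep_1$ model: it views $\Rep_1(\H_S)$ as the subrepresentation of $\C[\G_S]$ of elements whose coefficients are constant on right cosets of $\H_S$, and notes that a copy of $\rho$ inside $\C[\G_S]$ is constant on $\G_S/\H_S$-cosets exactly when the corresponding copy of $\bar\rho$ inside $\C[\G]=\C[\G_S]\otimes\C[\G_{E-S}]$ (obtained by tensoring with the $\G_{E-S}$-invariant line) is constant on $\G/\H$-cosets. You instead work from the $\Rep_2$ model, apply Frobenius reciprocity to reduce membership in $\Rep(\H)$ to the nonvanishing of $V_\rho^{\H}$, and then check directly that the canonical isomorphism $V_\rho\to V_{\bar\rho}$ intertwines the $\H_S$-action with the $\H$-action (since the latter factors through $\pr_S$). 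Both are standard descriptions of $\mathrm{Ind}_\H^\G\mathbf{1}_\H$; your version is more explicit about where the multiplicities come from, makes the stronger multiplicity-preserving statement evident (which the paper states only later, in Proposition \ref{normal subgroups}, and only for normal $\H$), and avoids the slight abuse of language in the paper's phrasing ``$\rho$ is constant on the right cosets.'' The paper's version is shorter and stays entirely inside the concrete $\C[\G]$ model that is reused later in the Laplacian section.
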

\begin{proof}
The easiest way to see this is from the point of view of $\Rep_1(\H)$ and to observe that  as a subrepresentation of $\C[\G],~\rho$ is constant on the right cosets $\G_S/\H_S$ if and only if $\bar{\rho}$ is constant on the right cosets $\G/\H.$  
\end{proof}

We saw previously that $S$ is a flat of $P^\ast$ if and only if $E-S$ has no coloops.  Hence it might be useful to understand  the relationship between $\Rep(\H)$ and the coloops of $P(\H).$

\begin{lem} \label{dual coloops}
An element $x \in E$ is a coloop of $P(\H)$ if and only if $x \in \triv (\rho)$ for all $\rho \in \Rep(\H).$
\end{lem}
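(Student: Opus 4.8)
\textbf{Proof plan for Lemma \ref{dual coloops}.}

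The plan is to translate the statement ``$x$ is a coloop of $P(\H)$'' into the representation-theoretic language of $\Rep(\H)$ by reducing to the two groups $\Gamma_x$ and $\G_{E-\{x\}}$. Recall that $x$ is a coloop precisely when $r_\H(E) = r_\H(E-\{x\}) + r_\H(\{x\})$ with $r_\H(\{x\}) = \a_x > 0$, i.e. $|\H| = |\H_{E-\{x\}}| \cdot |\Gamma_x|$; equivalently $\H_{\{x\}} = \Gamma_x$ and the restriction map $\pr_{E-\{x\}}: \H \to \H_{E-\{x\}}$ has kernel of size $|\Gamma_x|$. Unwinding this, $x$ is a coloop iff for every $\gamma \in \Gamma_x$ the element $(\gamma, 1_{\G_{E-\{x\}}})$ lies in $\H$ — in other words, $\H$ contains the ``pure-$x$'' subgroup $\Gamma_x \times \{1\}$. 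First I would establish this reformulation carefully using (\ref{one more}) and the fact that $x$ not being a coloop means the kernel of $\pr_{E-\{x\}}$ restricted to $\H$ is a proper subgroup of $\Gamma_x \times\{1\}$ (after identifying that kernel with a subgroup of $\Gamma_x$).

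Next I would exploit the characterization from Proposition \ref{normal subgroups} applied via $\Rep_1(\H)$ or $\Rep_2(\H)$: a representation $\rho = \bigotimes_{y \in E}\rho_y \in \widehat{\G}$ lies in $\Rep(\H)$ if and only if $\rho$ is trivial on $\H$ (this is immediate from the $\Rep_1$-description, since constancy on right cosets of $\H$ for a one-dimensional-looking but possibly higher-dimensional $\rho$ amounts to $\H$ acting trivially; more precisely $\rho \in \Rep(\H)$ iff $\H \le \ker \rho$, which holds without normality because $\Rep_1(\H)$ consists of the functions on $\G$ right-$\H$-invariant, and the $\rho$-isotypic piece survives iff $\rho$ factors through the coset space). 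The condition ``$x \in \triv(\rho)$ for all $\rho \in \Rep(\H)$'' then says: every $\rho$ trivial on $\H$ has $\rho_x$ trivial.

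The two directions now follow from this dictionary. If $x$ is a coloop, then $\Gamma_x \times \{1\} \le \H$, so any $\rho$ trivial on $\H$ is in particular trivial on $\Gamma_x \times \{1\}$, forcing $\rho_x$ trivial, i.e. $x \in \triv(\rho)$. Conversely, if $x$ is not a coloop, the image $\KK$ of $\ker \pr_{E-\{x\}}$ in $\Gamma_x$ is a proper subgroup; pick a nontrivial irreducible $\rho_x \in \widehat{\Gamma_x}$ that is trivial on $\KK$ (such exists because $\Gamma_x/\KK$ is a nontrivial group, so has a nontrivial irreducible representation, which pulls back to $\Gamma_x$). I claim the extension $\bar{\rho}_x$ (trivial on all other coordinates) restricted to $\H$ is trivial: indeed, for $h \in \H$, $\bar\rho_x(h) = \rho_x(h_x)$, and as $h$ ranges over $\H$ with fixed image under $\pr_{E-\{x\}}$, the coordinate $h_x$ ranges over a coset of $\KK$, on which $\rho_x$ is constant; but $\H$ itself need not make $h_x$ land in a single $\KK$-coset, so I must instead observe directly that $h_x \in \KK$ whenever $h \in \ker\pr_{E-\{x\}}$ — and for general $h\in\H$ one needs the stronger fact that $\rho_x(h_x)$ is well-defined as a function of $\pr_{E-\{x\}}(h)$; this is exactly Goursat-type bookkeeping. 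Cleanest is: use that $\bar\rho_x$ trivial on $\H$ is equivalent, by the Extension lemma, to $\rho_x \in \Rep(\H_{\{x\}})$ — no, that only uses the $x$-coordinate. So instead I would argue that $\bar\rho$ is trivial on $\H$ iff $\rho_x$ is trivial on $\H_{\{x\}}$ together with compatibility; the honest route is to pass to $\H/(\H\cap(\ker\pr_{E-\{x\}}))$ and build the character there.

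\textbf{Main obstacle.} The only real subtlety is the converse direction: producing an explicit $\rho \in \Rep(\H)$ with $\rho_x$ nontrivial when $x$ is not a coloop. The danger is that although $\KK = \{h_x : h \in \H,\ \pr_{E-\{x\}}(h)=1\}$ is a proper subgroup of $\Gamma_x$, a character nontrivial on $\Gamma_x/\KK$ need not extend consistently across all of $\H$ (the coordinate $h_x$ is only well-defined modulo $\KK$ given $\pr_{E-\{x\}}(h)$, and the assignment $\pr_{E-\{x\}}(h) \mapsto h_x\KK$ is a homomorphism $\H_{E-\{x\}} \to \Gamma_x/\KK$ only if $\KK$ is normal in $\Gamma_x$, which it is, being a kernel). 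Once I observe $\KK \trianglelefteq \Gamma_x$ and that there is a well-defined homomorphism $\H_{E-\{x\}} \to \Gamma_x/\KK$, I can compose: take the regular representation of $\Gamma_x/\KK$, pull back to $\G$ through the composite $\G \twoheadrightarrow \G_{E-\{x\}} \supseteq \H_{E-\{x\}} \to \Gamma_x/\KK$... but this pullback is not of tensor-product form in general. The resolution is simpler: since $x$ is not a coloop, $\Gamma_x \times\{1\} \not\le \H$, so by Pontryagin-style duality inside $\Gamma_x$ there is a nontrivial $\rho_x \in \widehat{\Gamma_x}$ with $\Gamma_x\times\{1\}$'s image under... — ultimately I expect the slick proof to go: $x$ coloop $\iff$ $\Gamma_x\times\{1\}\le\H$ $\iff$ every irreducible of $\G$ trivial on $\H$ is trivial on $\Gamma_x\times\{1\}$ $\iff$ every $\rho\in\Rep(\H)$ has $\rho_x$ trivial $\iff$ $x\in\triv(\rho)$ for all $\rho\in\Rep(\H)$, where the middle equivalence is the standard fact that a subset of a finite group is detected by the irreducibles of the ambient group trivial on it, applied to the subgroup generated by $\H$ and $\Gamma_x\times\{1\}$. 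I would spend the bulk of the write-up nailing down that middle step.
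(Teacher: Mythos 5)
Your reformulation of ``$x$ is a coloop'' as ``$\Gamma_x\times\{1\}\le\H$'' is fine, but the dictionary you then apply to $\Rep(\H)$ is wrong. You claim $\rho\in\Rep(\H)$ iff $\H\le\ker\rho$, ``which holds without normality.'' It does not: Proposition~\ref{normal subgroups} gives that equivalence \emph{only} for $\H\trianglelefteq\G$. In general $\Rep(\H)$ is the induction of the trivial representation of $\H$, so by Frobenius reciprocity $\rho\in\Rep(\H)$ iff $\rho$ has a nonzero $\H$-\emph{invariant vector}, which for $\dim\rho>1$ is strictly weaker than $\H\le\ker\rho$. Example~\ref{diagonal} is a counterexample both to your claim and to the final chain you propose: for $\G=S_3\times S_3$ and $\H$ the diagonal we have $t\otimes t\in\Rep(\H)$ although $\H\not\le\ker(t\otimes t)$; moreover the only irreducible of $\G$ actually trivial on $\H$ is $\mathbf 1$ (the diagonal has normal closure all of $\G$), so the step ``$\Gamma_x\times\{1\}\le\H$ iff every irreducible trivial on $\H$ is trivial on $\Gamma_x\times\{1\}$'' would falsely certify $x$ as a coloop in this example.

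With the correct (invariant-vector) reading, your forward direction does go through: if $\Gamma_x\times\{1\}\le\H$, a nonzero $\H$-invariant vector in $\rho=\bigotimes_y\rho_y$ is in particular $\Gamma_x$-invariant, so $\rho_x^{\Gamma_x}\ne0$ and the irreducible $\rho_x$ must be trivial. But the converse is precisely the gap you flag yourself, and your repair --- that $\KK=\{h_x:h\in\H,\ \pr_{E-\{x\}}(h)=1\}$ is normal in $\Gamma_x$ ``being a kernel'' --- is also false: the image of a normal subgroup of $\H$ under $\pr_{\{x\}}$ is normal in $\H_{\{x\}}$, not in $\Gamma_x$ (take $\H=\langle(12)\rangle\times\{1\}\le S_3\times\Gamma_y$; then $\KK=\langle(12)\rangle$ is not normal in $S_3$), so pulling back a character of $\Gamma_x/\KK$ is not available in general. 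The paper sidesteps both directions with a pure dimension count: $x$ is a coloop iff $|\H|=|\H_{E-\{x\}}|\,|\Gamma_x|$ iff $|\G/\H|=|\G_{E-\{x\}}/\H_{E-\{x\}}|$ iff $\dim\Rep_2(\H)=\dim\Rep_2(\H_{E-\{x\}})$, and the extension lemma identifies the last equality with the condition that every $\rho\in\Rep(\H)$ has $x\in\triv(\rho)$. No explicit representation is ever constructed and the structure of $\KK$ never enters; you should adopt this counting argument rather than the explicit construction.
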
  

\begin{proof}
  By definition, $x$ is a coloop of $P(\H)$ if and only if  
  $$|\H| = |\H_{E-\{x\}}| |\Gamma_x| \mbox{ iff } \frac{|\G|}{|\H|} = \frac{|\G_{E-\{x\}}|}{|\H_{E-\{x\}}|} \mbox{ iff } \dim \Rep_2(\H) = \dim \Rep_2(\H_{E - \{x\}}).$$

The extension lemma implies that the  last equality holds if and only $x \in \triv(\rho)$ for all $\rho \in \Rep(\H).$
\end{proof}

\begin{prop} \label{triv=flat}
If $\rho \in \Rep(\H),$ then $ \triv(\rho)$ is a flat of $P^\ast.$
\end{prop}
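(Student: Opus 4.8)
The plan is to invoke the characterization of flats recorded just before Lemma~\ref{r* for E}: a subset $S\subseteq E$ is a flat of $P^\ast$ if and only if the deletion $P\setminus S$ has no coloops. First I would note that $P=P(\H)$ really is an $\a$-polymatroid, since $r_\H(\{x\}) = \log_b|\H_{\{x\}}|\le\log_b|\Gamma_x| = \a_x$, so $P^\ast$ is defined; and that $P(\H)\setminus S$ and $P(\H_{E-S})$ coincide, because for $T\subseteq E-S$ the composite $\pr_T\circ\pr_{E-S}$ equals $\pr_T$, whence $(\H_{E-S})_T = \H_T$ and the two rank functions agree on subsets of $E-S$. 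So it remains to show that $P(\H_{E-S})$ has no coloops, where $S=\triv(\rho)$.

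Next I would restrict $\rho$ to the coordinates outside $S$. Writing $\rho = \bigotimes_{x\in E}\rho_x$, the definition $S=\triv(\rho)$ says exactly that $\rho_x$ is trivial for $x\in S$ and nontrivial for $x\in E-S$. Hence $\rho$ is precisely the extension to $\widehat{\G}$ of $\sigma := \bigotimes_{x\in E-S}\rho_x \in \widehat{\G_{E-S}}$, so the Extension lemma (applied with $E-S$ in the role of its $S$) yields $\sigma\in\Rep(\H_{E-S})$; moreover $\triv(\sigma) = \{x\in E-S : \rho_x \text{ is trivial}\} = \emptyset$.

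Finally, suppose for contradiction that some $x\in E-S$ is a coloop of $P(\H_{E-S})$. Applying Lemma~\ref{dual coloops} to the subgroup $\H_{E-S}\le\G_{E-S}$ gives $x\in\triv(\sigma')$ for every $\sigma'\in\Rep(\H_{E-S})$, and in particular $x\in\triv(\sigma)=\emptyset$, which is absurd. Therefore $P(\H_{E-S}) = P(\H)\setminus S$ has no coloops, and $S$ is a flat of $P^\ast$.

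I do not expect a genuine obstacle: the argument is essentially the bookkeeping that deleting $S$ on the group side is projection onto $E-S$, combined with the observation that the ``slice'' $\sigma$ of $\rho$ lies in $\Rep(\H_{E-S})$ and has empty $\triv$-set, which by Lemma~\ref{dual coloops} precludes coloops of $P(\H_{E-S})$. The only point requiring a little care is to apply Lemma~\ref{dual coloops} to $\H_{E-S}$ rather than to $\H$ itself, which is legitimate since that lemma is stated for an arbitrary subgroup of an arbitrary finite product of finite groups.
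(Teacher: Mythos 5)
Your proof is correct and is the same argument as the paper's, just with the implicit steps spelled out: the paper's one-line proof ("if $x\in E-S$ is a coloop of $P(\H)\setminus S$, the previous proposition implies $x\in\triv(\rho)$, contradiction") is exactly your chain — identify $P(\H)\setminus S$ with $P(\H_{E-S})$, restrict $\rho$ to $\sigma\in\Rep(\H_{E-S})$ via the extension lemma, note $\triv(\sigma)=\emptyset$, and apply Lemma~\ref{dual coloops} to $\H_{E-S}$ — left compressed. You are right that the lemma must be applied to $\H_{E-S}$ rather than $\H$ itself; making that explicit is a clarification, not a departure.
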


\begin{proof}
Let $S = \triv(\rho).$ If $x \in E - S$ is a coloop of $P(\H) \setminus S,$ then the previous proposition implies that $ x \in \triv(\rho). $   Therefore, $E-S$ has no coloops and $S$ is a flat of $P^\ast.$
\end{proof}

\begin{prop}  \label{dual coatoms}
If $S$ is a coatom of $\L_{P^{\ast}},$ then there exists $\rho \in \Rep(\H)$ suchthat $\triv(\rho) = S.$
\end{prop}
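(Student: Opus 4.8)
The plan is to mirror the proof of Proposition~\ref{realizing coatoms} by passing to the subgroup $\H_F\le\G_F$ with $F=E-S$, and then to run the argument entirely inside $\Rep(\H_F)$ using the Extension Lemma and Lemma~\ref{dual coloops}. First I would record what the coatom hypothesis buys us. Since $S$ is a flat of $P^\ast$, the lemma preceding Lemma~\ref{r* for E} gives that $P(\H)\setminus S=P(\H_F)$ has no coloops. Since $S$ is in addition maximal among proper flats of $P^\ast$, for every $x\in F$ the $P^\ast$-closure of $S\cup\{x\}$ is a flat strictly containing $S$, hence equals $E$, so (closure being rank-preserving) $r^\ast(S\cup\{x\})=r^\ast(E)$. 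Unwinding the definition (\ref{a-duality}) of $r^\ast$ together with $\a_x=\log_b|\Gamma_x|$ and $r^\ast(E)=|\a|-r_\H(E)$, this equation collapses to $\log_b|\H_{F-x}|=\log_b|\G_{F-x}|$, i.e. $\H_{F-x}=\G_{F-x}$, for every $x\in F$. I would also note that $F\ne\emptyset$ (as $S\ne E$), that no $x\in F$ is a loop of $P(\H)$ (a loop of $P(\H)$ is a loop of $P^\ast$, hence lies in every flat and in particular in $S$), and that $\H_F\ne\G_F$: otherwise, picking any $x_0\in F$, the identities $\H_{F-x_0}=\G_{F-x_0}$ and $\H_{\{x_0\}}=\Gamma_{x_0}$ would make $x_0$ a coloop of $P(\H_F)$, contradicting the no-coloop fact just established.

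Next I would reduce the statement to a claim about $\H_F$ alone via the Extension Lemma: it suffices to produce an irreducible $\sigma\in\Rep(\H_F)$ with $\triv(\sigma)=\emptyset$. Applying the Extension Lemma with the subset in that lemma taken to be $F$, such a $\sigma$ extends to $\bar\sigma\in\Rep(\H)$, and by construction $\triv(\bar\sigma)=S\cup\triv(\sigma)=S$, which is exactly what is wanted.

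To build such a $\sigma$, I would argue as follows. Because $\H_F\ne\G_F$, the representation $\Rep_2(\H_F)=\C[\G_F/\H_F]$ has dimension $|\G_F:\H_F|>1$ and, $\G_F$ acting transitively on $\G_F/\H_F$, contains the trivial representation with multiplicity exactly one; hence $\Rep(\H_F)$ contains some non-trivial irreducible $\sigma=\bigotimes_{x\in F}\sigma_x$. Suppose $\triv(\sigma)\ne\emptyset$ and pick $x_0\in\triv(\sigma)$. Then $\sigma$ is the extension to $\widehat{\G_F}$ of $\sigma'=\bigotimes_{x\in F-x_0}\sigma_x\in\widehat{\G_{F-x_0}}$, so by the Extension Lemma (now applied to $\H_F\le\G_F$ with the subset taken to be $F-x_0$) we get $\sigma'\in\Rep\bigl((\H_F)_{F-x_0}\bigr)=\Rep(\G_{F-x_0})$, using $\H_{F-x_0}=\G_{F-x_0}$ from the first step. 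But $\Rep(\G_{F-x_0})$ consists only of the irreducible appearing in $\C[\G_{F-x_0}/\G_{F-x_0}]=\C$, namely the trivial one; so $\sigma'$ is trivial and hence so is $\sigma$, a contradiction. Therefore $\triv(\sigma)=\emptyset$, and the proof concludes.

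The step I expect to be the real obstacle is the last one. Lemma~\ref{dual coloops} (equivalently, absence of coloops in $P(\H_F)$) by itself only says that for each \emph{single} coordinate $x\in F$ some representation in $\Rep(\H_F)$ is non-trivial at $x$; what we need is one representation that is non-trivial at \emph{every} coordinate of $F$ at once. The extra input $\H_{F-x}=\G_{F-x}$, which is precisely what maximality of the flat $S$ yields, is what closes this gap: it forces $\Rep(\H_{F-x})$ to be trivial, and hence forces every non-trivial element of $\Rep(\H_F)$ to have empty $\triv$. Isolating this implication cleanly — rather than trying to control the union of the "bad" subfamilies directly — is the crux of the argument.
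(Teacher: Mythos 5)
Your proposal reaches the result along the same broad path as the paper: both arguments find a non-trivial irreducible $\rho$ in $\Rep(\H_{E-S})$ and then argue that $\triv(\rho)$ must be empty, so that the extension $\bar\rho$ lies in $\Rep(\H)$ with $\triv(\bar\rho)=S$. Where you diverge is in that last step. The paper's proof is a two-liner: if $\triv(\rho)\ne\emptyset$, then by Proposition~\ref{triv=flat} the set $\triv(\bar\rho)$ is a flat of $P^\ast$ strictly between $S$ and $E$, contradicting coatom maximality. You instead unwind coatom maximality into the algebraic statement $\H_{F-x}=\G_{F-x}$ for every $x\in F=E-S$, and then use the Extension Lemma to show $\sigma'\in\Rep(\G_{F-x_0})=\{\text{trivial}\}$, forcing $\sigma$ itself to be trivial. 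This is a valid and self-contained alternative; it trades the slick appeal to~\ref{triv=flat} for an explicit description of what ``coatom'' says about $\H$, which you correctly identify as the crux.

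Two issues in the preparatory remarks, though, one of which is an outright error. You write that ``a loop of $P(\H)$ is a loop of $P^\ast$, hence lies in every flat and in particular in $S$.'' This is false: if $x$ is a loop of $P(\H)$ and $\Gamma_x$ is non-trivial, then $r^\ast(\{x\})=\a_x>0$, so $x$ is a \emph{coloop} of $P^\ast$, not a loop, and coloops need not lie in every flat. For instance, with $\G=(\Z/2\Z)^2$ and $\H=\{(0,0),(0,1)\}$, the unique coatom of $\L_{P^\ast}$ is $S=\{2\}$, and $1\in F$ is a loop of $P(\H)$ --- so the claim ``no $x\in F$ is a loop of $P(\H)$'' itself can fail. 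What you actually need for the coloop contradiction is only that $\a_{x_0}>0$, i.e.\ $\Gamma_{x_0}$ is non-trivial for $x_0\in F$, and the correct reason is different: if $\Gamma_{x_0}$ were trivial then $x_0$ would be a loop of $P^\ast$ (here $r^\ast(\{x_0\})=\a_{x_0}=0$), hence in every flat of $P^\ast$ including $S$, contradicting $x_0\in F$. Second, your derivation of $\H_F\ne\G_F$ via the coloop argument is more circuitous than necessary: since $S$ is a proper flat of $P^\ast$ one has $r^\ast(S)<r^\ast(E)$, and by the definition of $r^\ast$ this inequality is literally the statement $|\H_F|<|\G_F|$. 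With those two repairs your argument is correct.
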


\begin{proof}
  Since $S$ is a coatom of $\L_{P^\ast}(\H),~\dim \Rep_2(\H_{E-S}) > 0,$ so there exists $\rho \in \Rep(\H_{E-S}),~ \rho$ not equal to the trivial representation.  If $\triv(\rho) \neq \emptyset,$ then $\triv(\bar{\rho})$ is a flat of $P^{\ast}$ strictly between $S$ and $E.$ Thus $\triv(\rho)=\emptyset, \triv(\bar{\rho})=S$ and $\bar{\rho} \in \Rep(\H).$
\end{proof}

\begin{prop}
 Let $S$ be a flat of $P^\ast.$ Then there exist $\{\rho_1, \dots, \rho_m\} \subseteq \Rep(\H),~m \le n-|S|$ such that  $\bigcap^m_{i=1} \triv(\rho_i) = S.$ 
\end{prop}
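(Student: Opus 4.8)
The statement is the $\Rep(\H)$-analog of Proposition \ref{realizing flats}, and the natural approach is to mimic that proof exactly, replacing "loop of a contraction" with "coloop of a deletion" and the group elements $h(x)$ with irreducible representations $\rho(x)$. The plan is as follows. Fix a flat $S$ of $P^\ast$. By the Lemma preceding Lemma \ref{r* for E}, $S$ is a flat of $P^\ast$ exactly when $P(\H)\setminus S$ has no coloops; equivalently, for every $x\in E-S$, the element $x$ is not a coloop of $P(\H)\setminus S$. The goal is, for each such $x$, to produce a representation $\rho(x)\in\Rep(\H)$ with $x\notin\triv(\rho(x))$ while $\triv(\rho(x))\supseteq S$, and then take $\{\rho_1,\dots,\rho_m\}$ to be the (deduplicated) collection $\{\rho(x):x\in E-S\}$, which has size $m\le n-|S|=|E-S|$.

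First I would reduce to a deletion. Consider $\H_{E-S}\le\G_{E-S}$. For $x\in E-S$: since $x$ is not a coloop of $P(\H)\setminus S = P(\H_{E-S})$, Lemma \ref{dual coloops} (applied inside $\G_{E-S}$) gives a representation $\sigma(x)\in\Rep(\H_{E-S})$ with $x\notin\triv(\sigma(x))$. Now extend: let $\rho(x)=\overline{\sigma(x)}$ be the extension of $\sigma(x)$ to $\widehat{\G}$. By the Extension Lemma, $\rho(x)\in\Rep(\H)$. Moreover $\triv(\rho(x)) = \triv(\sigma(x))\cup(E-(E-S)) \supseteq S$, and $x\notin\triv(\sigma(x))$ so $x\notin\triv(\rho(x))$.

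With the $\rho(x)$ in hand, set $\{\rho_1,\dots,\rho_m\}$ to be the distinct elements of $\{\rho(x):x\in E-S\}$; clearly $m\le n-|S|$. Then
$$\bigcap_{i=1}^m \triv(\rho_i) = \bigcap_{x\in E-S}\triv(\rho(x)) \supseteq S,$$
since each $\triv(\rho(x))\supseteq S$. Conversely, for any $x\in E-S$ we have $x\notin\triv(\rho(x))$, so $x\notin\bigcap_i\triv(\rho_i)$; hence $\bigcap_i\triv(\rho_i)\subseteq S$. Combining, $\bigcap_{i=1}^m\triv(\rho_i)=S$, as required.

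The one step requiring care — the main "obstacle," though it is minor — is correctly tracking the trivial coordinates under extension: one must be sure that extending $\sigma(x)$ from $\widehat{\G_{E-S}}$ to $\widehat{\G}$ fills in exactly the $S$-coordinates with trivial representations, so that $\triv(\overline{\sigma(x)}) = \triv(\sigma(x))\sqcup S$ and in particular contains $S$ while still excluding $x$. This is immediate from the definition of the extension and of $\triv$, so no real difficulty arises; the rest is the same bookkeeping as in Proposition \ref{realizing flats}.
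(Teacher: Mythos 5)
Your proof is correct and follows essentially the same argument as the paper: reduce to the absence of coloops in $P(\H)\setminus S$, use Lemma \ref{dual coloops} on $\H_{E-S}$ to get a representation nontrivial at each $y\in E-S$, extend by the extension lemma, and intersect. (Incidentally, the paper's own proof cites Proposition \ref{dual coatoms} at this step, which appears to be a misreference; the relevant result is Lemma \ref{dual coloops}, exactly as you used.)
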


\begin{proof}
Let $y \in E-S.$  Since $y$ is not a coloop of $P \setminus S,$  Proposition \ref{dual coatoms} shows that there exists $\rho^y \in \Rep(\H_{E-S})$ such that $\rho^y = \bigotimes_{x \in E-S} \rho^y_x,~\rho^y_x \in \widehat{\G_x}$ and $\rho^y_y$ is not the trivial representation of $\G_y.$ Then$\{\overline{\rho^y}\}_{y \in E-S}$ satisfies the conclusion of the proposition. 
\end{proof}

\begin{thm} (Dual Crapo-Rota for groups) \label{Dual Crapo-Rota} \ \\ 
Given $S \subseteq E,$ let $Y(S) = \{(\rho_1, \dots, \rho_k) \in (\Rep(\H))^k: \displaystyle\bigcap^k_{i=1} \triv(\rho_i) = S \}$. For  any choice of $b>1$ and positive integer $k,$  
\begin{equation} \label{dual Crapo-Rota formula}
\displaystyle\sum_{(\rho_1, \dots, \rho_k) \in Y(\emptyset)} \displaystyle\prod^k_{i=1} \dim \rho_i= \mathlarger{\chi}_{P^\ast}(b^k).
\end{equation}
\end{thm}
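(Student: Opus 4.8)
The plan is to run the same Möbius inversion argument used in the proof of Theorem~\ref{group Crapo-Rota}, but weighted by dimensions and carried out on the lattice $\L_{P^\ast}$ instead of $\L_{P(\H)}$. First I would reduce to the case where $P^\ast$ has no loops: if $P^\ast$ has a loop $x$, then $x$ is a coloop of $P(\H)$, so by Lemma~\ref{dual coloops} every $\rho \in \Rep(\H)$ has $x \in \triv(\rho)$, hence $\triv(\rho_1)\cap\cdots\cap\triv(\rho_k) \ni x \neq \emptyset$ for every $k$-tuple, so $Y(\emptyset)=\emptyset$ and the left side is $0$; meanwhile $\mathlarger{\chi}_{P^\ast} \equiv 0$ by Proposition~\ref{mobius char poly}. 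So assume $P^\ast$ has no loops, which means $\emptyset \in \L_{P^\ast}$.

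Next, by Proposition~\ref{triv=flat} the map $(\rho_1,\dots,\rho_k) \mapsto \triv(\rho_1)\cap\cdots\cap\triv(\rho_k)$ takes values in $\L_{P^\ast}$ (intersections of flats are flats). Define, for $S \in \L_{P^\ast}$,
\begin{equation}
g(S) = \sum_{(\rho_1,\dots,\rho_k)\in Y(S)} \prod_{i=1}^k \dim\rho_i .
\end{equation}
The heart of the argument is to compute the zeta-transform $\sum_{T \supseteq S} g(T)$, i.e.\ the dimension-weighted count of $k$-tuples $(\rho_1,\dots,\rho_k)\in(\Rep(\H))^k$ with $S \subseteq \triv(\rho_i)$ for every $i$. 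Because the weight is multiplicative over the $k$ coordinates, this sum factors as
\begin{equation}
\sum_{T \supseteq S} g(T) = \left( \sum_{\substack{\rho \in \Rep(\H)\\ S \subseteq \triv(\rho)}} \dim\rho \right)^{\! k}.
\end{equation}
By the extension lemma, the representations $\rho \in \Rep(\H)$ with $S \subseteq \triv(\rho)$ are exactly the extensions $\bar\sigma$ of $\sigma \in \Rep(\H_{E-S})$, and $\dim\bar\sigma = \dim\sigma$; hence the inner sum is $\sum_{\sigma\in\Rep(\H_{E-S})}\dim\sigma = \dim\Rep_2(\H_{E-S}) = |\G_{E-S}|/|\H_{E-S}| = b^{\,|\a_{E-S}| - r_\H(E-S)} = b^{\,r^\ast(S)}$, using Lemma~\ref{r* for E} applied to $\H_{E-S}$ together with the definition $r^\ast(S) = r_\H(E-S) + |\a_S| - r_\H(E)$... wait, that last identity needs the contraction-style bookkeeping: $|\a_{E-S}| - r_\H(E-S)$ is $r^{\ast}$ computed for the deletion, not $r^\ast(S)$; the correct statement is $\dim\Rep_2(\H_{E-S}) = |\G_{E-S}|/|\H_{E-S}|$, whose $\log_b$ is $|\a_{E-S}| - r_\H(E-S)$, and one checks directly from \eqref{a-duality} that this equals $r^\ast(S)$ is false in general — rather one should instead zeta-transform over the \emph{down-set} and get $\log_b$ of the inner sum $= |\a|-r_\H(E) - \big(|\a_S| - ?\big)$; the clean route is to verify $|\a_{E-S}|-r_\H(E-S) = r^{\ast}(E-(E-S))$ only when $S$ behaves well, so in the writeup I would instead directly prove $\dim\Rep_2(\H_{E-S}) = b^{r^\ast(S)}$ by the short computation $r^\ast(S) = r(E-S)+|\a_S|-r(E)$ and $|\G_{E-S}|/|\H_{E-S}| = b^{|\a_{E-S}|}/b^{r(E-S)}$, then check $|\a_{E-S}| - r(E-S) = r(E-S)+|\a_S|-r(E)$, i.e.\ $|\a| - 2r(E-S) = ?$ — this is the one genuine computational point to get right, so I flag it as the main obstacle: \textbf{matching $\log_b$ of the factored zeta-transform to $r^\ast(S)$ exactly.} The resolution is that one should zeta-transform $g$ not over $T \supseteq S$ but set things up so the exponent that appears is $r^\ast(E)-r^\ast(S)$; concretely $\big(\sum_{S\subseteq\triv\rho}\dim\rho\big)^k$ has $\log_b$ equal to $k(|\a_{E-S}|-r(E-S))$, and one verifies $|\a_{E-S}|-r(E-S) = r^\ast(E) - r^\ast(S)$ directly from \eqref{a-duality}: indeed $r^\ast(E)-r^\ast(S) = (|\a|-r(E)) - (r(E-S)+|\a_S|-r(E)) = |\a|-|\a_S|-r(E-S) = |\a_{E-S}| - r(E-S)$, as needed.

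Therefore $\sum_{T\supseteq S} g(T) = (b^k)^{\,r^\ast(E)-r^\ast(S)}$ for all $S \in \L_{P^\ast}$. Applying Möbius inversion on $\L_{P^\ast}$ and then Proposition~\ref{mobius char poly} (valid since $P^\ast$ is loopless) gives
\begin{equation}
g(\emptyset) = \sum_{S\in\L_{P^\ast}} \mu_{\L_{P^\ast}}(\emptyset,S)\,(b^k)^{r^\ast(E)-r^\ast(S)} = \mathlarger{\chi}_{P^\ast}(b^k),
\end{equation}
which is exactly \eqref{dual Crapo-Rota formula}. The only step requiring care is the exponent-matching identity in the displayed factorization, which as shown above is a one-line consequence of the definition of $r^{\ast_\a}$; everything else is the verbatim skeleton of the proof of Theorem~\ref{group Crapo-Rota} with "count" replaced by "dimension-weighted count" and $I(h)$ replaced by $\triv(\rho)$.
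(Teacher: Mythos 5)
Your proof is correct and follows the same route as the paper: reduce to the coloop-free case, push the $k$-tuples to $\L_{P^\ast}$ via $\triv$, compute the dimension-weighted zeta transform using the extension lemma, verify $|\a_{E-S}| - r_\H(E-S) = r^\ast(E)-r^\ast(S)$ from the definition of $r^{\ast_\a}$, and M\"obius-invert. The only difference is presentational: the paper states the final exponent identity as ``holds for any $\a$-polymatroid'' without showing the one-line algebra you work out (after some initial hesitation), so your write-up just needs the exploratory detour trimmed.
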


\begin{proof}
With Proposition \ref{triv=flat} in hand we can copy the proof of Theorem \ref{group Crapo-Rota}.  If $x$ is a coloop of $P(\H),$ then by the proof of Proposition \ref{triv=flat}, the left hand side of (\ref{dual Crapo-Rota formula}) is zero.  In addition, $x$ is a loop of $P^\ast,$ so $\mathlarger{\chi}_{P^\ast}$ is the zero function.  

So we assume that $P(\H)$ has no coloops and thus the empty set is a flat of $\L_{P^\ast}.$ Since $\triv(\rho)$ is always a flat of $P^\ast$, and the flats of $P^\ast$ are closed under intersection, $\phi:(\Rep(\H))^k \to \L_{P^\ast}$ defined by $\phi(\rho_1, \dots, \rho_k) = \triv(\rho_1) \cap \cdots \cap \triv(\rho_k)$ is a function from $(\Rep(\H))^k$ to $\L_{P^\ast}.$ Now let $f:\L_{P^\ast} \to \Z_{\ge 0}$ be defined by $f(S) = \displaystyle\sum_{(\rho_1, \dots, \rho_k) \in Y(S)} \displaystyle\prod^k_{i=1} \dim \rho_i.$   When $Y(S)$ is empty,  $f(S)$ is defined to be zero.   Then 
$$\displaystyle\sum_{T \supseteq S} f(T)=\displaystyle\sum_{\stackrel{(\rho_1, \dots, \rho_k)} {\forall i,~ S \subseteq \triv(\rho_i)}} \displaystyle\prod^k_{i=1} \dim \rho_i$$ 
 $$=(\dim \Rep_2(\H_{E-S}))^k = \bigg(\frac{|\G_{E-S}|}{|\H_{E-S}|}\bigg)^k=(b^{\a_{E-S} - r_\H(E-S)})^k= (b^k)^{r^\ast(E) - r^\ast(S)}.$$
 The second equals sign follows from the extension lemma and the last equals sign holds for any $\a$-polymatroid.   
 Now apply Lemma \ref{mobius char poly} and M\"obius inversion.
\end{proof}

\begin{example} (Example \ref{s, S3} continued)
Let $\Gamma, \G,$ and $ \H$ be as in Example \ref{s, S3}. Recall that $1,s,$ and $t$ are respectively, the trivial, sign and two-dimensional irreducible representations of $S_3.$ So $b=|\Gamma|=6,$ and $\a=(1,1).$ Hence,

$$r_\H(\emptyset) = 0,~r_\H(\{1\}) = \log_6 2,~r_\H(\{2\}) = 1 = r_\H(\{1,2\}).$$

Therefore,
$$r^{\ast_\a}(\emptyset) = 0,~r^{\ast_\a}_\H(\{1\}) = r^{\ast_\a}_\H(\{1,2\}) =1,~r^{\ast_\a}_\H(\{2\}) = \log_6 2.$$

The flats and relevant M\"obius functions in $P^{\ast_\a}$ are
$$\emptyset \le \{2\} \le \{1,2\},~\mu(\emptyset, \emptyset)=1,~\mu(\emptyset, \{2\}) = -1, \mu(\emptyset, \{1,2\})=0.$$

Hence,
$$\mathlarger{\chi}_{P^{\ast_\a}} (\lambda) = \lambda - \lambda^{1-\log_6 2} = \lambda - \lambda^{\log_6 3}.$$

The intersection $\triv(\rho_1) \cap \triv(\rho_2) = \emptyset$ for all pairs $(\rho_1, \rho_2) \subseteq (\Rep(\H))^2$ except
$$(1 \otimes 1, 1 \otimes 1), (1 \otimes 1, t \otimes 1), (t \otimes 1, 1 \otimes 1), (t \otimes 1, t \otimes 1).$$

Thus, the l.h.s. of (\ref {dual Crapo-Rota formula}) is $36-9.$ On the other side, 
$$\mathlarger{\chi}_{P^{\ast_a}}(6^2) = 36 - 36^{\log_6 3} = 36- 6^{2 \log_6 3} = 36-9.$$
\end{example}

As the reader may have noticed, we have not yet given a rank function on $E$  that only depends on the combinatorics of $\Rep(\H),$ equals $r^\ast_\H$, and reduces to the usual definition of $M[\H]^\ast$ when $\Gamma_x$ is $\Z/p\Z$ for all $x.$ Lemma  \ref{r* for E} shows that $r_{\Rep(\H)}(E) = \log_b \displaystyle\sum_{\rho \in \Rep(\H)} \dim \rho$  gives the same answer as the usual definition of $r^\ast_{\H^\perp}(E)$ for the special case of all $\Gamma_x = \Z/p\Z$. However,  it is not at this point clear how to define a rank function for arbitrary $S \subseteq E$ that only depends on the combinatorial data of $\Rep(\H).$ One  `obvious' choice would be to try $r^\ast_{\Rep(\H)}(S)=\log_b \displaystyle\sum_{\rho \in \Rep(\H)} \displaystyle\prod_{x \in S} \dim \rho_x$ where we write each $\rho \in \Rep(\H)$ as $\displaystyle\bigotimes_{x \in E} \rho_x.$  However, this does not always work. 
  
  \begin{example} (Example \ref{diagonal} continued.)
We continue examining Example \ref{diagonal}. So, $\a=(1,1)$ and $r_\H(\{1\}) = r_\H(\{2\}) = r_\H(\{1,2\})=1.$ Thus, $r^\ast_\H(\{1\}) = r^\ast(\{2\}) = r^\ast_\H(\{1,2\})=1.$ We saw previously that $\Rep(\H)$ was equal to $\{1 \otimes 1, s \otimes s, t \otimes t\},$ where $\{1,s,t\}$ are the three irreducible representations in $\widehat{S_3}.$   In this case the sum of the dimensions of the representations in each single coordinate is $4$, not the expected total of $6$ needed to equal the corresponding rank of the polymatroid $\a$-dual to $r_\H.$   
\end{example}  

Our definition of a rank function based on the combinatorics of $\Rep(\H)$ which is equal to $r^\ast_\H$ is based on the observation that for any $\a$-polymatroid $P=(E,r)$ and $S \subseteq E,~r^\ast(S) = r^\ast(E) - r^\ast_{P/S}(E-S).$   Since the definition makes sense for any representation of $\G,$ we state it in that generality.

\begin{defn}
Let $\Rep$ be a representation of $\G.$ Write $\Rep=\tau_1 \oplus \cdots \oplus \tau_m$ as the direct sum of irreducible representations in $\widehat{\G}.$  Then $r_\Rep:2^E \to \R_{\ge 0}$ is defined by
$$r_\Rep(S)=\log_b \dim \Rep - \log_b \displaystyle\sum_{S \subseteq \triv(\tau_i)} \dim \tau_i.$$ 
\end{defn}

While $r_\Rep$ is always normalized and monotone, it may or may not be submodular.  For example, suppose $\widehat{\Gamma}$ contains a nontrivial one-dimensional irreducible representation $\rho.$ Let $1$ be the trivial representation of $\Gamma,~E=\{1,2,3\}$ and $\Rep= (1 \otimes 1 \otimes \rho) \oplus (1 \otimes 1 \otimes 1) \oplus (\rho \otimes 1 \otimes1).$  Then 
$$r_\Rep (\{1,2\}) + r_\Rep (\{2,3\})= \log_b \frac{9}{4} < \log_b \frac{9}{3} =  r_\Rep(\{1,2,3\}) + r_\Rep(\{2\}).$$
\noindent Nevertheless, $r_{\Rep(\H)}$ does provide a formula for a rank function based on the combinatorics of $\Rep(\H)$ which is equal to $r^\ast_\H.$
\begin{prop}  Let $S \subseteq E.$ Then
$$r^\ast_\H (S) = r_{\Rep(\H)} (S).$$
\end{prop}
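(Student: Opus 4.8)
The plan is to reduce both sides to quantities attached to the projected subgroup $\H_{E-S}\le\G_{E-S}$, using the Extension Lemma, and then close the gap with the elementary arithmetic satisfied by the $\a$-dual rank function.

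First I would use Lemma \ref{r* for E} to write $\log_b\dim\Rep(\H)=|\a|-r_\H(E)=r^\ast(E)$, so that, writing $\Rep(\H)=\tau_1\oplus\cdots\oplus\tau_m$ as in the definition of $r_{\Rep}$, the assertion $r_{\Rep(\H)}(S)=r^\ast(S)$ becomes equivalent to
$$\log_b\!\!\sum_{\{i\,:\,S\subseteq\triv(\tau_i)\}}\!\!\dim\tau_i \;=\; r^\ast(E)-r^\ast(S).$$
The right-hand side is purely arithmetic: from (\ref{a-duality}), $r^\ast(E)-r^\ast(S)=(|\a|-r_\H(E))-(r_\H(E-S)+|\a_S|-r_\H(E))=|\a_{E-S}|-r_\H(E-S)$, and since $|\G_{E-S}|=b^{|\a_{E-S}|}$ and $|\H_{E-S}|=b^{r_\H(E-S)}$ by (\ref{rank function}), this equals $\log_b(|\G_{E-S}|/|\H_{E-S}|)=\log_b\dim\Rep_2(\H_{E-S})$.

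So the real content is to identify the left-hand sum with $\dim\Rep_2(\H_{E-S})$. Here I would observe that $\rho=\bigotimes_{x\in E}\rho_x\in\widehat\G$ satisfies $S\subseteq\triv(\rho)$ precisely when $\rho$ is the extension $\bar\sigma$ of some $\sigma=\bigotimes_{x\in E-S}\rho_x\in\widehat{\G_{E-S}}$, and that $\sigma\mapsto\bar\sigma$ is a dimension-preserving bijection from $\widehat{\G_{E-S}}$ onto $\{\rho\in\widehat\G:S\subseteq\triv(\rho)\}$. The Extension Lemma says $\sigma\in\Rep(\H_{E-S})\iff\bar\sigma\in\Rep(\H)$, and a short Frobenius-reciprocity (equivalently, character-averaging) computation, using that $\pr_{E-S}\colon\H\to\H_{E-S}$ is surjective, upgrades this to an equality of multiplicities. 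Then
$$\sum_{\{i\,:\,S\subseteq\triv(\tau_i)\}}\dim\tau_i=\sum_{\sigma\in\Rep(\H_{E-S})}\dim\sigma=\dim\Rep_2(\H_{E-S}),$$
which is exactly the identity already invoked (for general exponent $k$) in the proof of Theorem \ref{Dual Crapo-Rota}; combining this with the arithmetic computation above gives the proposition.

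The one delicate point — the step I expect to be the main obstacle — is the multiplicity bookkeeping in the Extension Lemma: set-level containment is immediate from the $\Rep_1$ description of $\Rep(\H)$, but summing dimensions correctly requires that extending by trivial characters on the coordinates of $S$ preserves multiplicities in the induced representation. It is handled by the averaging identity $\frac{1}{|\H|}\sum_{h\in\H}\chi_{\bar\sigma}(h)=\frac{1}{|\H_{E-S}|}\sum_{h'\in\H_{E-S}}\chi_{\sigma}(h')$, which follows from $\chi_{\bar\sigma}=\chi_\sigma\circ\pr_{E-S}$ together with the surjectivity of $\pr_{E-S}$. Everything else is unwinding the definitions of $\triv$, $\Rep_2$, and the $\a$-dual rank.
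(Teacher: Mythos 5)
Your proposal is correct and follows essentially the same route as the paper: the same algebraic reduction of $r^\ast_\H(S)$ to $\log_b\bigl(|\G|/|\H|\bigr) - \log_b\bigl(|\G_{E-S}|/|\H_{E-S}|\bigr)$ via Lemma \ref{r* for E} and the definition of the $\a$-dual, followed by the extension lemma to identify the second term with $\log_b \sum_{S\subseteq\triv(\tau_i)}\dim\tau_i$. The one place you are more careful than the paper is the multiplicity bookkeeping: the extension lemma as stated is only a membership criterion, and your character-averaging identity $\frac{1}{|\H|}\sum_{h\in\H}\chi_{\bar\sigma}(h)=\frac{1}{|\H_{E-S}|}\sum_{h'\in\H_{E-S}}\chi_\sigma(h')$ (via $\chi_{\bar\sigma}=\chi_\sigma\circ\pr_{E-S}$ and surjectivity of $\pr_{E-S}$) correctly supplies the justification, left implicit in the paper, that extension by trivial factors preserves multiplicities in $\Rep_2.$
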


\begin{proof}
We start with the definition, $r^\ast_\H(S) = r_\H(E-S) + |\a_S| - r(E).$ Substituting $|\a_S|=|\a|-|\a_{E-S}|$ and reorganizing, 
$$r^\ast_\H(S) =  (|\a|-r(E)) - (|\a_{E-S}| - r_\H(E-S)) = \log_b \bigg(\frac{|\G|}{|\H|}\bigg) - \log_b \bigg(\frac{|\G_{E-S}|}{|\H_{E-S}|}\bigg).$$ 
The dimension of $\Rep(\H)$ is$\log_b \left(\frac{|\G|}{|\H|}\right).$  Now write $\Rep_2(\H) = \tau_1 \oplus \cdots \oplus \tau_m$ as the direct sum of the irreducible representations in $\Rep(\H)$. The extension lemma implies that 
$$\log_b \bigg(\frac{|\G_{E-S}|}{|\H_{E-S}|}\bigg) = \log_b \displaystyle\sum_{S \subseteq \triv(\tau_i)} \dim \tau_i.$$
\end{proof}

  Once one accepts Whitney's suggestion that the dual of a graph may or may not be another graph, it is natural to ask for which graphs $G$ is there a graph $G'$ such that $M[G]^\ast = M[G'].$  This question is answered by Whitney's previous work, \cite[Theorem 29]{Wh2}.  It says that if $G$ is a graph, then there exists a graph $G'$ such that $M[G]^\ast = M[G']$ if and only if $G$ is a planar graph.  With a concrete definition of $r_{\Rep(\H)}$ in hand we can ask which groups, if any, play a role similar to the role planar graphs play with graphs and matroids.  For the remainder of this section $\G= \Gamma^E,~b=|\Gamma|$ and $\a=\a[1].$
  
 \begin{defn}
 A finite group $\Gamma$ is {\bf closed under polymatroid duality} if for all $n$ and subgroups $\H$ of $\G=\Gamma^n,$ there exists $\H'$ a subgroup of $\G$ such that $P^\ast(\H) =P(\H')$
 
 \end{defn} 

\begin{thm}
A finite group $\Gamma$ is closed under polymatroid duality if and only if $\Gamma$ is abelian. 
\end{thm}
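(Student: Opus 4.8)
The plan is to prove both implications separately. For the ``if'' direction, suppose $\Gamma$ is abelian. Then every subgroup $\H \le \G = \Gamma^n$ is normal, and by Proposition~\ref{normal subgroups} the multiset $\Rep(\H)$ consists exactly of those $\rho \in \widehat{\G}$ whose kernel contains $\H$, each with multiplicity one. Since $\Gamma$ is abelian, $\widehat{\Gamma}$ is a finite abelian group (under tensor product) isomorphic to $\Gamma$, and $\widehat{\G} \cong \widehat{\Gamma}^{\,n}$; moreover every $\rho \in \widehat{\G}$ is one-dimensional, so $\triv(\rho) = \{x : \rho_x = 1\} = I(\rho)$ under this identification, and $\Rep(\H)$ is literally a subgroup $\H'$ of $\widehat{\G} \cong \Gamma^n$. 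I would then verify that the rank function $r_{\Rep(\H)}$ from the penultimate displayed definition agrees with $r_{\H'}$: because all dimensions are $1$, $r_{\Rep(\H)}(S) = \log_b \dim \Rep(\H) - \log_b |\{\tau_i : S \subseteq \triv(\tau_i)\}| = \log_b |\H'| - \log_b |\ker \pr_S : \H' \to \H'_S|\cdot(\dots)$, which collapses to $\log_b |\H'_S| = r_{\H'}(S)$. Combined with the proposition immediately preceding this theorem, $P^\ast(\H) = (E, r_{\Rep(\H)}) = P(\H')$, so $\Gamma$ is closed under polymatroid duality.

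For the ``only if'' direction, suppose $\Gamma$ is nonabelian; I want to exhibit a subgroup $\H \le \Gamma^n$ such that $P^\ast(\H)$ is not $\Gamma$-representable. The natural candidate is the diagonal $\H = \{(\sigma, \sigma) : \sigma \in \Gamma\} \le \Gamma^2$, generalizing Example~\ref{diagonal}. Here $r_\H(\{1\}) = r_\H(\{2\}) = r_\H(\{1,2\}) = 1$ (taking $b = |\Gamma|$), so with $\a = \a[1]$ we get $r^\ast_\H(\{1\}) = r^\ast_\H(\{2\}) = r^\ast_\H(\{1,2\}) = 1$: in other words $P^\ast(\H) \cong U_{1,2}$, the rank-one uniform matroid on two elements with its two elements parallel. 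The point is that $U_{1,2}$ is a matroid (indeed its simplification is $U_{1,1}$), and I would argue that a nonabelian $\Gamma$ \emph{can} in fact realize $U_{1,2}$ (take $\H' = \{(\gamma,\gamma):\gamma\in\Gamma\}$ again — its projections to each coordinate and to the whole set all have size $|\Gamma|$), so the diagonal alone does not produce a witness. This forces me to look for a subgroup whose dual polymatroid is \emph{not} a matroid, or is a matroid with a forbidden minor.

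The honest approach: use the excluded-minor theorem for $\CC^{Ma}(\Gamma)$ with $\Gamma$ nonabelian, which says $M \in \CC^{Ma}(\Gamma)$ iff $M$ has no $U_{2,3}$ minor iff the simplification of $M$ is $U_{n,n}$ or $M$ is all loops. So it suffices to find $\H \le \Gamma^n$ with $P^\ast(\H)$ a matroid whose simplification is not of that form — e.g.\ $P^\ast(\H) \cong U_{2,3}$, equivalently $P(\H) \cong U_{1,3}$. Now $U_{1,3}$ \emph{is} $\Gamma$-representable (three parallel coordinates all equal to $\Gamma$), but I need its $\a[1]$-dual $U_{2,3}$ to fail. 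By the theorem, $U_{2,3} \notin \CC^{Ma}(\Gamma)$ for nonabelian $\Gamma$, hence $U_{2,3} \notin \CC(\Gamma)$ (since a matroid is $\Gamma$-representable as a polymatroid iff as a matroid, when $U_{2,3}$ shows up one checks representability can only be matroidal here). Therefore $P^\ast(\H) = U_{2,3}$ has no realization $\H'$, and $\Gamma$ is not closed under polymatroid duality. The main obstacle is making the last step fully rigorous: I must confirm that $P^\ast(\H) \cong U_{2,3}$ really is not of the form $P(\H')$ for \emph{any} subgroup $\H' \le \Gamma^m$ of \emph{any} power — but this is exactly what $U_{2,3} \notin \CC(\Gamma)$ means, and $\CC(\Gamma) \cap \{\text{matroids}\} = \CC^{Ma}(\Gamma)$ (shown in Section~\ref{Matroid representability}), so the excluded-minor theorem closes the argument. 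I would double-check the one subtle point that $U_{1,3}$ is genuinely realizable over nonabelian $\Gamma$ (it is: $\H' = \{(\gamma,\gamma,\gamma)\}$) so that $P^\ast(\H)=U_{2,3}$ is attained as a dual, completing the contrapositive.
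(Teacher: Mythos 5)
Your proposal is correct and, once the exploratory detour through $\Gamma^2$ is set aside, arrives at exactly the paper's argument: the diagonal subgroup of $\Gamma^3$ realizes $U_{1,3}$, its $\a[1]$-dual is $U_{2,3}$, and $U_{2,3}$ is the excluded minor for nonabelian $\Gamma$; for the converse, you identify $\Rep(\H)$ with a subgroup $\H'$ of $\widehat{\G}\cong\G$ via one-dimensionality of irreducibles and verify $r_{\H'}=r^\ast_\H$. The only cosmetic difference is that you route the abelian direction through the preceding proposition ($r^\ast_\H=r_{\Rep(\H)}$) rather than recomputing it directly as the paper does, which is a fine reorganization.
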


\begin{proof}
   Suppose $\Gamma$ is not abelian.  Let $\H$ be the diagonal embedding of $\Gamma$ into $\Gamma^E,~E=\{x,y,z\}.$ Then $P(\H)$ is isomorphic to the matroid $U_{1,3}.$ Hence $P^\ast$ is isomorphic to $U_{2,3}.$ However, $U_{2,3}$ is an excluded minor for $\Gamma,$ hence there is no $\H'$ such that $P(\H')= P^\ast(\H).$
   
   Conversely, assume that $\Gamma$ is abelian.  So all irreducible representations under consideration are one-dimensional and we can choose an isomorphism $\phi: \widehat{\Gamma} \to \Gamma$ and extend it in the obvious way to $\bar{\phi}: \widehat{\G} \to \G.$   By Lemma \ref{normal subgroups}, $\rho \in \widehat{\G}$ is in $\H$ if and only if $\H \subseteq \ker \rho.$  Therefore, $\H'=\bar{\phi}(\Rep(\H))$ is a subgroup of $\Gamma.$ It remains to show that $r_{\H'} = r^\ast_\H.$
   
 Let $S \subseteq E.$ The surjectivity of $\pr_S:\H' \to \H'_S$ and the fact that $\triv(\rho) = I(\bar{\phi}(\rho))$ imply that
 $$|\H'_S| = \frac{|\H'|}{|\ker \pr_S:\H' \to \H'_S|}=\frac{|\G|/|\H|}{|h' \in \H': S \subseteq I(h')|}=\frac{|\G|/|\H|}{|\rho \in \Rep(\H): S \subseteq \triv(\rho)|}$$
 $$=\frac{|\Gamma|^{|E|}/|\H|}{|\G_{E-S}|/|\H_{E-S}|}$$
The last equal sign follows from the extension lemma.  Applying $\log_b=\log_{|\Gamma|}$ to both sides results in 
$$r_{\H'}(S) = |E|-|E-S|-r_\H(E) + r_\H(E-S) = r^\ast_\H(S).$$
\end{proof}

\section{Group codes and the Tutte polynomial } \label{Group codes}

In this section we give a  introduction to group codes and the Tutte polynomial of a polymatroid.  We also give a glimpse at how $\Rep(\H)$ acts as a dual code for a group code $\H$ through the MacWilliams identity for the weight enumerator of a code.  For a close look at how $\Rep(\H)$ gives one possible answer for Dougherty, Kim and Sol\e's open problem, ``Is there a duality and MacWilliams formula for codes over non-Abelian group?'' \cite[Open Question 4.3]{DKS}, which includes MacWiliams identity for the complete weight enumerator, see \cite{We}.  Throughout this section $\G = \Gamma^n$ where $\Gamma$ is a  finite group.

 A {\bf code of length $n$} consists of a finite alphabet $\A$ and a subset $\CC \subseteq \A^n.$  The elements of $\CC$ are called {\bf code words}.   
  When $\A$ is a finite field $\Field$ and $\CC$ a subspace of the $\Field$-vector space $\Field^n$, the code is called an {\bf $\Field$-linear code.}   
   One of the most important invariants of an $\Field$-linear code is its weight enumerator.

Let $\CC$ be an $\Field$-linear code. Given $c \in \CC$ the {\bf weight} of $c$ is $w(c) = |\{i: c_i \neq 0\}|.$ The {\bf weight enumerator} of $\CC$ is 
$$ W_\CC(t) = \displaystyle\sum_{c \in \CC} t^{w(c)}.$$

 The Crapo-Rota critical theorem makes it clear that for an $\Field$-linear code $\CC,$ its matroid $M[\CC]$ determines $W_\CC.$ This was made precise by Greene via the Tutte polynomial of $M[\CC]$ \cite{Gr}. 
 
 The {\bf Tutte polynomial} of a matroid $M$ is a two-variable integer polynomial invariant $T_M(u,v),$ defined by
 
 $$T_M(u,v) = \displaystyle\sum_{ S \subseteq E} (u-1)^{r(E)-r(S)} (v-1)^{|S|-r(S)}.$$
  The applications of the Tutte polynomial  are far too numerous to cover adequately here. For a sampling of applications,  history and theory of the Tutte polynomial  see \cite{EM}.   An alternative definition of the Tutte polynomial is through  deletion/contraction and induction.  
 
 \begin{prop} \label{matroid Tutte recursion} \ 
 \begin{itemize}
  \item If $M$ is the empty matroid, then $T_M(u,v)=1.$
  \item If $x$ is a coloop of $M,$ then $T_M(u,v)=u ~T_{M \setminus x}(u,v).$
  \item If $x$ is a loop of $M,$ then $T_M(u,v) = v ~T_{M/x} (u,v).$
  \item If $x$ is neither a loop nor a coloop, then $T_M(u,v) =T_{M \setminus x}(u,v) + T_{M/x}(u,v).$
 \end{itemize}
 \end{prop}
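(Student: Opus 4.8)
The plan is to prove all four items at once by splitting the defining sum for $T_M(u,v)$ over the subsets $S\subseteq E$ according to whether a fixed element $x$ lies in $S$. Write $E'=E-\{x\}$. Each $S\not\ni x$ is a subset $S'\subseteq E'$, and each $S\ni x$ has the form $S'\cup\{x\}$ with $S'\subseteq E'$. Using the rank functions from Section~\ref{Representability}, namely $r_{M\setminus x}(S')=r(S')$ and $r_{M/x}(S')=r(S'\cup\{x\})-r(\{x\})$, the sum becomes
$$
T_M(u,v)=\sum_{S'\subseteq E'}(u-1)^{r(E)-r(S')}(v-1)^{|S'|-r(S')}
+\sum_{S'\subseteq E'}(u-1)^{r(E)-r(S'\cup\{x\})}(v-1)^{|S'|+1-r(S'\cup\{x\})}.
$$
It is important to keep $r(E)$ (the ambient rank) in both sums before specializing, since that is exactly the quantity that distinguishes the coloop case from the others.

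Next I would do the exponent bookkeeping. Whenever $r(\{x\})=1$, i.e. $x$ is not a loop, the second sum is literally $T_{M/x}(u,v)$, because $r(E)-r(S'\cup\{x\})=r_{M/x}(E')-r_{M/x}(S')$ and $|S'|+1-r(S'\cup\{x\})=|S'|-r_{M/x}(S')$. Whenever $r(E)=r(E')$, i.e. $x$ is not a coloop, the first sum is $T_{M\setminus x}(u,v)$. This already gives item~4. For item~2, $x$ a coloop forces $r(E)=r(E')+1$ and $r(S'\cup\{x\})=r(S')+1$ for all $S'\subseteq E'$ by the law of diminishing returns (as recorded in Section~\ref{duality for realized polymatroids}); factoring $(u-1)$ out of the first sum and using that $M/x=M\setminus x$ for a coloop gives $T_M(u,v)=(u-1)T_{M\setminus x}(u,v)+T_{M/x}(u,v)=u\,T_{M\setminus x}(u,v)$. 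For item~3, $x$ a loop forces $r(\{x\})=0$, hence $r(S'\cup\{x\})=r(S')$ for all $S'$ (monotonicity together with submodularity) and $r(E)=r(E')$; the first sum is $T_{M\setminus x}(u,v)$, the second is $(v-1)T_{M\setminus x}(u,v)$, and $M/x=M\setminus x$, so $T_M(u,v)=v\,T_{M/x}(u,v)$. Item~1 is the single surviving term $S=\emptyset$, for which $r(\emptyset)=r(E)=0$ and the summand is $(u-1)^0(v-1)^0=1$.

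I do not expect any genuine obstacle here; this is the classical matroid deletion/contraction recursion, and once the sum is split the remaining work is elementary manipulation of rank exponents. The only delicate point is the one flagged above: writing the split with the ambient $r(E)$ throughout, and invoking the small standard facts — $r(S\cup\{x\})=r(S)+r(\{x\})$ for a coloop, $r(S\cup\{x\})=r(S)$ for a loop, and $M/x=M\setminus x$ when $x$ is a loop or coloop — all of which are immediate consequences of (P3') and have already appeared in the excerpt.
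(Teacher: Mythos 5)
Your proof is correct and follows essentially the same approach the paper sketches: split the defining sum over subsets $S$ according to whether $x\in S$, rewrite the two halves in terms of $T_{M\setminus x}$ and $T_{M/x}$, and treat the loop/coloop cases via the rank identities $r(S\cup\{x\})=r(S)$ (loop) and $r(S\cup\{x\})=r(S)+1$ (coloop). The exponent bookkeeping is handled carefully, including the observation that the ambient rank $r(E)$ must be kept in both sums until one specializes, so there is nothing to add.
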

 
 The proof of this proposition is to split the sum over all subsets $S$ of $E$ into those which do not contain $x$ and those which do contain $x.$ Then first sum can be written in terms of $T_{M \setminus x}(u,v)$ and the latter can be written in terms of $T_{M/x}(u,v).$ The exact same idea allows one to sum up the last three cases into a single equation which can be applied to any $x \in E.$  
 
 \begin{equation} \label{polymatroid Tutte recursion}
 T_M(u,v) = (u-1)^{r(E)-r(E-x)} T_{M \setminus x}(u,v) + (v-1)^{1-r(\{x\})} T_{M/x}(u,v).
 \end{equation}

\noindent We will use this recursive formula for the Tutte polynomial  in the proof of Theorem \ref{Greene for groups} below. 
 
 \begin{thm} \label{Greene's theorem} (Greene's theorem) \cite{Gr}
 Let $\CC$ be an $\Field$-linear code, $q=|\Field|,$ and $d = \dim_\Field \CC.$ Then
 $$W_\CC(t) = (1-t)^d~t^{n-d}~T_{M[\CC]} \bigg(\frac{1+(q-1)t}{1-t}, \frac{1}{t} \bigg).$$
 
 \end{thm}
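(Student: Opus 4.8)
The plan is to prove Greene's theorem by induction on $n$, the length of the code, using the deletion/contraction recursion (\ref{polymatroid Tutte recursion}) for the Tutte polynomial. The base case is $n=0$: the code $\CC$ is the zero code, $W_\CC(t)=1$, and $T_{M[\CC]}=1$, so the formula holds trivially. For the inductive step, fix a coordinate $x\in E$ and relate the weight enumerator $W_\CC$ to the weight enumerators of the codes $\CC\setminus x$ (the projection of $\CC$ deleting coordinate $x$) and $\CC/x$ (the shortened code, i.e. codewords of $\CC$ that are zero in coordinate $x$, with that coordinate removed). These are exactly the codes whose matroids are $M[\CC]\setminus x$ and $M[\CC]/x$ respectively. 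The key combinatorial identity to establish is that, when $x$ is neither a loop nor a coloop of $M[\CC]$,
$$W_\CC(t) = (1-t)\,W_{\CC\setminus x}(t) + \big((q-1)t - (1-t)\big)\,W_{\CC/x}(t),$$
obtained by partitioning codewords of $\CC$ according to whether their $x$-coordinate is zero or not; codewords with zero $x$-coordinate contribute $W_{\CC/x}(t)$, and the projection map $\CC\to\CC\setminus x$ is $(q-1)$-to-one on codewords with nonzero $x$-coordinate (for each codeword of $\CC\setminus x$ not coming from $\CC/x$). One then checks the analogous, simpler identities when $x$ is a loop (coordinate identically zero: $W_\CC=W_{\CC/x}$) or a coloop ($W_\CC = (1-t)W_{\CC\setminus x} + (q-1)t\,W_{\CC\setminus x}$, since $\dim\CC$ drops but $M[\CC]\setminus x = M[\CC/x]$).

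Next I would substitute the inductive hypothesis for $W_{\CC\setminus x}$ and $W_{\CC/x}$, writing each in terms of the evaluation $T_{M[\CC]\setminus x}\!\left(\frac{1+(q-1)t}{1-t},\frac1t\right)$ and $T_{M[\CC]/x}\!\left(\frac{1+(q-1)t}{1-t},\frac1t\right)$, being careful about the exponents $d = \dim\CC$ and $n$: note $\dim(\CC\setminus x) = d$ and $\dim(\CC/x) = d-1$ when $x$ is not a coloop, while $\dim(\CC\setminus x) = d-1$ when $x$ is a coloop, and the length drops by one in both cases. Then I would match this against the right-hand side of Greene's formula by plugging $M = M[\CC]$ into (\ref{polymatroid Tutte recursion}) with $u = \frac{1+(q-1)t}{1-t}$, $v = \frac1t$, so that $u-1 = \frac{qt}{1-t}$ and $v-1 = \frac{1-t}{t}$, and verifying that the prefactors $(1-t)^d t^{n-d}$ times the two terms of the Tutte recursion reproduce exactly the two terms of the weight-enumerator recursion above. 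This is a bookkeeping computation tracking powers of $(1-t)$ and $t$; in the case $x$ is not a loop or coloop one has $r(E)-r(E-x)=0$ and $1-r(\{x\})=0$, so (\ref{polymatroid Tutte recursion}) reduces to the plain sum $T_M = T_{M\setminus x}+T_{M/x}$, and in the coloop/loop cases the extra factors of $u$ or $v$ supply the needed $(q-1)t$ or account for the length shift.

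The main obstacle I expect is not any single deep idea but getting the three cases (ordinary element, loop, coloop) to line up cleanly with the correct powers of $1-t$ and $t$ — in particular making sure the dimension and length drops are handled consistently, and that the "$(q-1)$-to-one" counting in the ordinary case is stated correctly (the fiber over a codeword of $\CC\setminus x$ that *does* lift from $\CC/x$ also has size $q$, not $q-1$, so the partition of $\CC\setminus x$ must be done carefully). One clean way to sidestep some of this is to prove the identity $W_\CC(t) = (1-t)^{\,r(E)-r(E-x)}\,W_{\CC\setminus x}(t) + \big((q-1)t\big)^{?}\cdots$ in a uniform form mirroring (\ref{polymatroid Tutte recursion}) directly, but I would likely just do the three cases separately since it is more transparent. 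Alternatively — and this is probably the slicker route — one can bypass induction entirely by invoking the Crapo–Rota critical theorem (Theorem \ref{field Crapo-Rota}): expand $W_\CC(t)$ by grouping codewords according to their zero set $Z(c)$, use Möbius inversion over the lattice of flats exactly as in the proof of Theorem \ref{group Crapo-Rota}, and recognize the resulting sum over subsets $S\subseteq E$ as the subset-expansion of the claimed Tutte evaluation after the substitution $t\mapsto \frac1{1+(q-1)t}$ type change of variables. Either way, the heart of the matter is the substitution identity between the two variable specializations, and I would present the deletion/contraction argument as the primary proof.
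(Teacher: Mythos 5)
Your overall strategy --- induction on $n$ via the deletion/contraction recursion (\ref{polymatroid Tutte recursion}) --- is sound, and it is exactly the route the paper takes to prove the more general Theorem~\ref{Greene for groups}, of which this statement is the special case where $\Gamma$ is the additive group of $\Field$ and $\H=\CC$. (The paper does not re-prove the field case separately; it cites Greene.) However, the deletion/contraction identity you propose for $W_\CC$ is incorrect, and since that identity is the entire content of the inductive step, the gap is real.

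When $x$ is neither a loop nor a coloop, the projection $\pr_{E-\{x\}}:\CC\to\CC\setminus x$ is a \emph{bijection} (because $\dim(\CC\setminus x)=d$), not $(q-1)$-to-one. Splitting codewords of $\CC$ by whether $c_x=0$: those with $c_x=0$ contribute $W_{\CC/x}(t)$, while those with $c_x\neq 0$ are mapped bijectively onto the complement of $\CC/x$ inside $\CC\setminus x$ with each weight raised by one, contributing $t\bigl(W_{\CC\setminus x}(t)-W_{\CC/x}(t)\bigr)$. So the correct recursion is
$$W_\CC(t) = t\,W_{\CC\setminus x}(t) + (1-t)\,W_{\CC/x}(t),$$
and notably no $q$ appears. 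Your proposed identity $W_\CC=(1-t)W_{\CC\setminus x}+\bigl((q-1)t-(1-t)\bigr)W_{\CC/x}$ already fails the sanity check at $t=0$ (it gives $0$, not $1$) and at $t=1$ (it gives $(q-1)|\CC|/q$, not $|\CC|$). Your coloop identity is also off: there the projection is $q$-to-one with exactly one preimage in each fiber having $c_x=0$, which gives $W_\CC(t)=\bigl(1+(q-1)t\bigr)W_{\CC\setminus x}(t)$, whereas your $(1-t)W_{\CC\setminus x}+(q-1)t\,W_{\CC\setminus x}=\bigl(1+(q-2)t\bigr)W_{\CC\setminus x}$ again fails at $t=1$. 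Once these are corrected --- or written uniformly as $W_\CC(t)=q^{\,r(E)-r(E-\{x\})}\cdot t\cdot W_{\CC\setminus x}(t)+(1-t)\,W_{\CC/x}(t)$, as in the paper's proof of Theorem~\ref{Greene for groups} --- the rest of your bookkeeping plan with $u-1=\tfrac{qt}{1-t}$, $v-1=\tfrac{1-t}{t}$ and the dimension/length accounting does go through and yields the theorem.
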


  There have been several suggested extensions of the Tutte polynomial to integer polymatroids \cite{BKP}, \cite{CMSW}, \cite{OW}.  For subcardinal polymatroids, the simplest way to extend the Tutte polynomial is to use exactly the same definition. Of course, if $P$ is not an integer polymatroid, then $T_P(u,v)$ will not be a polynomial.  In addition, it is not clear how to define $T_P(u,v)$  when $x<1$ or $y<1.$ Nonetheless, Greene's theorem holds without change for group codes. 
  
  A {\bf $\Gamma$-code} of length $n$ is a code whose alphabet is a finite group $\Gamma$ and whose set of codewords  is a subgroup $\H$ of $\G=\Gamma^n.$  The {\bf weight} of a codeword $\gamma=\{\gamma_1, \dots, \gamma_n\}$ is $w(\gamma) = |\{i:\gamma_i \neq 1_\Gamma\}|.$  The {\bf weight enumerator} of a $\Gamma$-code $\H$ is $W_\H(t) = \displaystyle\sum_{h \in \H} t^{w(h)}.$ When $\Gamma$ is the additive group of a finite field $\Field$ and $\H$ is a subspace of the vector space $\Field^n,$ then this definition of the weight of a codeword and  of the weight enumerator of the code are the same as for $\Field$-linear codes.  
 
 \begin{thm} \label{Greene for groups} (Greene's theorem for group codes)
 Let $\Gamma$ be a finite group, $q = |\Gamma|,$ and $\H$ a $\Gamma$-code of length $n \ge 1.$ In addition, set $P(\H)$ to be the polymatroid $(E,r_\H),$ where $E=[n].$ Then,
 \begin{equation} \label{Greene group formula}
 W_\H(t) = (1-t)^{r(E)} t^{n-r(E)} ~T_{P(\H)} \bigg(\frac{1+(q-1)t}{1-t},\frac{1}{t} \bigg)
 \end{equation}
 \end{thm}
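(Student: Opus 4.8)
The plan is to mimic the standard deletion/contraction proof of Greene's theorem, using the recursive formula (\ref{polymatroid Tutte recursion}) for the Tutte polynomial together with the behavior of $W_\H(t)$ under deletion and contraction of a coordinate. First I would dispose of the base case: when $n=1$, either $\Gamma_1$-image $\H_{\{1\}}$ is trivial (so $1$ is a loop, $r(E)=0$, $W_\H(t)=t$, and $T_{P(\H)}(u,v)=v$, and one checks $(1-t)^0 t^1 \cdot \tfrac1t = 1 = t^{1-1}\cdot t$ — wait, more carefully $W_\H = 1$ since the only codeword is the identity with weight $0$; then the RHS is $t^{1}\cdot \tfrac1t = 1$), or $\H_{\{1\}}=\Gamma$ (so $1$ is a coloop, $r(E)=1$, $W_\H(t) = 1 + (q-1)t$, $T_{P(\H)}=u$, and the RHS is $(1-t)\cdot\tfrac{1+(q-1)t}{1-t} = 1+(q-1)t$). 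So the formula holds for $n=1$.

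Next, for $n\ge 2$, fix a coordinate $x\in E$ and split the sum defining $W_\H(t) = \sum_{h\in\H} t^{w(h)}$ according to whether $h_x = 1_{\Gamma_x}$ or not. The codewords with $h_x = 1_{\Gamma_x}$ form the subgroup $\ker(\pr_{\{x\}}\colon\H\to\H_{\{x\}})$, and projecting away the $x$-coordinate identifies this with a $\Gamma$-code realizing $P(\H)/ \{x\}$ (this is exactly the contraction realization $\pr_{E-x}(\ker\pr_{\{x\}})$ from Section \ref{Representability}); since these codewords have $h_x=1$, their weight is unchanged under the projection, giving a contribution of $W_{P(\H)/x}(t)$ in the obvious notation. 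For the codewords with $h_x\ne 1_{\Gamma_x}$: each such $h$ has weight $1 + w(h|_{E-x})$, and $\pr_{E-x}$ maps this set onto $\H_{E-x}$, which realizes $P(\H)\setminus\{x\}$; each fiber has size $|\ker\pr_{E-x}|/|\ker(\pr_{\{x\}}\colon\H\to\H_{\{x\}})|$ adjusted by the nonidentity count — more precisely the number of $h\in\H$ over a given $\bar h\in\H_{E-x}$ with $h_x\ne 1$ is $|\H_{\{x\}}|$-related. The clean way: the number of codewords projecting to a fixed $\bar h\in\H_{E-x}$ is $|\H|/|\H_{E-x}| = q^{\,r(E)-r(E-x)}$, of which $q^{\,r(E)-r(E-x)} - q^{\,r(\{x\})-r_{\H_{E-x}\cap\ker}}$... this bookkeeping is the step I expect to be the main obstacle, because unlike the linear-code case the intermediate quantities need not be "nice" powers of $q$ unless one is careful. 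The correct statement is: codewords with $h_x\ne 1$ contribute $t\bigl(q^{r(E)-r(E-x)} - q^{\,r(E\setminus x\text{ inside ker})}\bigr)\cdots$; I would instead organize it as $W_\H(t) = W_{P(\H)/x}(t) + t\cdot\bigl(W_{\text{all of }\H\text{, }x\text{-coord forgotten}}(t) - W_{P(\H)/x}(t)\cdot(\text{something})\bigr)$, matching term-by-term against (\ref{polymatroid Tutte recursion}) after substituting $u = \tfrac{1+(q-1)t}{1-t}$ and $v = \tfrac1t$.

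Concretely, the cleanest route is to define the putative right-hand side $G_\H(t) := (1-t)^{r(E)} t^{n-r(E)}\, T_{P(\H)}\!\bigl(\tfrac{1+(q-1)t}{1-t}, \tfrac1t\bigr)$ and verify that $G_\H$ satisfies the same recursion as $W_\H$. Plugging (\ref{polymatroid Tutte recursion}) into $G_\H$ and using that $n = (n-1)+1$, $r(E) = r(E-x) + [r(E)-r(E-x)]$, one gets $G_\H(t) = (1-t)^{r(E)-r(E-x)}\cdot\bigl(\tfrac{1+(q-1)t}{1-t}-1\bigr)^{r(E)-r(E-x)}\cdot(1-t)^{r(E-x)}t^{(n-1)-r(E-x)}T_{P(\H)\setminus x} + (\tfrac1t - 1)^{1-r(\{x\})}\cdot(1-t)^{r(E)}t^{n-r(E)}\,T_{P(\H)/x}$, and one simplifies $\bigl(\tfrac{1+(q-1)t}{1-t}-1\bigr) = \tfrac{qt}{1-t}$ so the first prefactor becomes $(qt)^{r(E)-r(E-x)}$, matching the $t\cdot q^{\#}$ count of nonidentity-in-$x$ codewords; and $(\tfrac1t-1)^{1-r(\{x\})} = \bigl(\tfrac{1-t}{t}\bigr)^{1-r(\{x\})}$ combines with $t^{n-r(E)}$ appropriately so the second term becomes $G_{P(\H)/x}(t)$ after checking the exponents of $(1-t)$ and $t$ agree (here one uses $r_{P(\H)/x}(E-x) = r(E)-r(\{x\})$ when... actually $r_{P/x}(E\setminus x) = r(E) - r(\{x\})$ only if $x$ contributes independently; in general $r_{P/x}(E\setminus x) = r(E)-r(\{x\})$ requires care, but for the recursion one uses $r_{P/x}(E\setminus x) = r(E) - r(\{x\})$ — no: $r_{P/x}(T) = r(T\cup\{x\})-r(\{x\})$, so $r_{P/x}(E\setminus x) = r(E) - r(\{x\})$, which is what's needed). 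Once both recursive terms of $G_\H$ are seen to equal $(qt)^{r(E)-r(E-x)}\cdot(\text{something that is }G\text{ of the deletion's... hmm})$ — I will match the deletion term of $G_\H$ against the "$h_x\ne 1$" part of $W_\H$ and the contraction term against the "$h_x = 1$" part. The main obstacle, again, is confirming that the $h_x\ne 1$ codewords contribute exactly $t\,(qt)^{r(E)-r(E\setminus x)}$-weighted copies of $W$ of the deletion realization $\H_{E-x}$; this is where surjectivity of $\pr_{E-x}$ and the fiber-size computation (each fiber has $q^{r(E)-r(E-x)}$ elements, of which those with $h_x = 1$ number $q^{\,r'}$ for the appropriate $r'$, so the nonidentity ones number $q^{r(E)-r(E-x)} - q^{r'}$ — but in fact we don't want to separate inside the fiber this way; rather we observe $\{h: h_x\ne 1\}$ surjects onto $\H_{E-x}$ iff for every $\bar h$ there's a lift with $h_x\ne 1$, which holds unless $x$ is a coloop, and the coloop case is handled by the base of the induction / the Tutte coloop rule separately). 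I would therefore split the induction into three cases paralleling Proposition \ref{matroid Tutte recursion} — $x$ a coloop (use the coloop rule and $W_\H = (1+(q-1)t)W_{\H_{E-x}}$... no, $W_\H(t) = (1+(q-1)t)\,W_{P(\H)\setminus x}(t)$ when $x$ is a coloop, since the $x$-coordinate is free and independent), $x$ a loop ($W_\H(t) = t\,W_{P(\H)/x}(t)$), and $x$ neither — and in each case match against the corresponding line of Proposition \ref{matroid Tutte recursion}. The "neither" case is the substantive one and the fiber-counting there is the crux; everything else is bookkeeping with the substitution $u = \tfrac{1+(q-1)t}{1-t}$, $v = \tfrac1t$.
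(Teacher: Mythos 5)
Your overall strategy — establish a deletion/contraction recursion for $W_\H$ and match it against the single recursive formula (\ref{polymatroid Tutte recursion}) for $T_{P(\H)}$ — is exactly what the paper does, and your manipulation of the Tutte-side exponents (pulling out $(qt/(1-t))^{r(E)-r(E-x)}$ from the deletion term and $((1-t)/t)^{1-r(\{x\})}$ from the contraction term) is correct. But the proposal has genuine gaps on the codeword side.

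First, your base case $n=1$ is incomplete. You treat only $\H_{\{1\}}$ trivial (loop) and $\H_{\{1\}}=\Gamma$ (coloop), but $\H$ can be any subgroup of $\Gamma$, so $r(\{1\})=\log_q|\H|$ need not be $0$ or $1$. You have to check the identity for this general rank: $W_\H(t)=1+(|\H|-1)t$, while the right-hand side, using $T_{P(\H)}(u,v)=(u-1)^r+(v-1)^{1-r}$ with $r=\log_q|\H|$, simplifies to $q^r t+(1-t)=1+(|\H|-1)t$. This is a short computation, but your dichotomy as stated does not cover it.

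Second — and this is the crux — you flag the per-fiber count of $\{h:h_x\neq 1_\Gamma\}$ as "the main obstacle," but that is a red herring: no per-fiber analysis of the $h_x=1$ codewords is needed. You already have both ingredients; you just never assemble them. Since every fiber of $\pr_{E-x}$ in $\H$ has $|\H|/|\H_{E-x}|=q^{r(E)-r(E-x)}$ elements, $\sum_{h\in\H}t^{w(\pr_{E-x}(h))}=q^{r(E)-r(E-x)}W_{\H_{E-x}}(t)$. Separately, $\pr_{E-x}$ restricted to $\{h\in\H:h_x=1_\Gamma\}$ is a weight-preserving bijection onto $\H/x$, so that sub-sum is exactly $W_{\H/x}(t)$. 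Subtract and reattach the extra $t$ for the $h_x\neq 1$ codewords:
\begin{equation*}
W_\H(t)=W_{\H/x}(t)+t\Big(q^{r(E)-r(E-x)}W_{\H_{E-x}}(t)-W_{\H/x}(t)\Big)=(1-t)W_{\H/x}(t)+t\,q^{r(E)-r(E-x)}W_{\H_{E-x}}(t).
\end{equation*}
This single recursion holds for every $x\in E$ with no case split, and matching it against (\ref{polymatroid Tutte recursion}) closes the induction in one stroke. Your proposal circles this identity without ever writing it, which is why you felt you needed the three-case split from Proposition \ref{matroid Tutte recursion}.

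Third, inside that three-case fallback there are errors. Your loop-case claim $W_\H(t)=t\,W_{\H/x}(t)$ is wrong: when $x$ is a loop, $h_x=1_\Gamma$ for all $h\in\H$, so $w(h)=w(\pr_{E-x}(h))$ and $\pr_{E-x}$ is injective on $\H$; hence $W_\H(t)=W_{\H/x}(t)$ with no factor of $t$. (The $t$ you expected is supplied on the Tutte side by $v=1/t$, not on the codeword side.) And your claim that the $h_x\neq1$ codewords contribute "$t\,(qt)^{r(E)-r(E-x)}$-weighted copies of $W_{\H_{E-x}}$" is also off: as the unified recursion above shows, that contribution is $t\,q^{r(E)-r(E-x)}W_{\H_{E-x}}(t)-t\,W_{\H/x}(t)$, not a pure multiple of the deletion weight enumerator. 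Correcting these and replacing the case split with the single recursion recovers the paper's proof.
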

 
 The first proof of this theorem was by the third author in \cite{Xu}.  That proof involved interpreting $t$ as a probability.  Since (\ref{Greene group formula}) is not clearly defined for $t \le 0$ or $t \ge 1$ such a proof seems highly appropriate.  Here we give a  deletion/contraction proof using (\ref{polymatroid Tutte recursion}).  
 
 \begin{proof}
 The induction starts with $n=1$ where the definitions show that both sides of (\ref{Greene group formula}) are equal to $|\H|-1.$ For notational convenience we suppress all references to $\H$ when discussing $P(\H)$ and $r_\H.$ We start by determining a deletion/contraction formula for $W_\H(t).$  
 
Let $x \in E$ and $h \in \H.$ If $h_x = 1_\Gamma,$ then $w(h) = w(\pr_{E-\{x\}}(h)).$  Otherwise, $w(h) = w(\pr_{E-\{x\}}(h))+1.$ Since $\pr_{E-\{x\}}$ is surjective, the cardinality of the preimage of every $h' \in \H_{E-\{x\}}$ is 
$$\frac{|\H|}{|\H_{E-\{x\}}|} = |\Gamma|^{r(E) - r(E-\{x\})}= q^{(r(E) - r(E-\{x\})}.$$
\noindent Now,  $K = \{h \in \H: h_x=1_\Gamma\}$ represents the contraction $P/x.$  For every $h \in K,~w(h) = w(\pr_{E-\{x\}}(h)).$ We conclude that 
\begin{equation}
W_\H(t) = q^{r_\H(E) - r_\H(E-\{x\})} \cdot t \cdot W_{\H_{E-\{x\}}}(t) + (1-t)  W_{\H/x}(t). 
\end{equation}

The induction hypothesis, some algebra,  and (\ref{polymatroid Tutte recursion}) imply,
$$
W_\H(t) = q^{r_\H(E) - r_\H(E-\{x\})} \cdot t \cdot t^{n-1-r(E-\{x\})} \cdot (1-t)^{r(E-\{x\})} T_{P(\H) \setminus x} \bigg( \frac{1 + (q-1)t}{1-t}, \frac{1}{t} \bigg)
$$
$$ + $$
$$\bigg(1-t\bigg)\bigg(t^{n-1-(r(E)-r(\{x\})}(1-t)^{r(E)-r(\{x\})}\bigg) T_{P(\H)/x}\bigg(\frac{1 + (q-1)t}{1-t}, \frac{1}{t} \bigg)$$
$$\ $$
$$= t^{n - r(E)} ( 1-t )^{r(E)} \bigg[(\frac{qt}{1-t})^{r(E) - r(E-\{x\})}T_{P(\H) \setminus x} \bigg(\frac{1 + (q-1)t}{1-t}, \frac{1}{t} \bigg) \bigg]$$
$$+$$
$$ t^{n - r(E)} ( 1-t )^{r(E)} \bigg[ (\frac{1-t}{t})^{1-r(\{x\})}  T_{P(\H)/x} \bigg(\frac{1 + (q-1)t}{1-t}, \frac{1}{t} \bigg) \bigg]$$
$$ \ $$
$$= t^{n - r(E)} ( 1-t )^{r(E)} T_{P(\H)} \bigg(\frac{1 + (q-1)t}{1-t}, \frac{1}{t} \bigg)$$

 \end{proof} 
 
 In order to understand the dual relationship between $\H$ and $\Rep(\H)$ we need to establish in what sense $\Rep(\H)$ can be thought of as a code, and what the analog of $W_\H$ is for $\Rep(\H).$  We leave the discussion of how one can  view $\Rep(\H)$ as a code, the proof of Theorem \ref{Dual Greene} below, and an explanation of how $\Rep(\H)$ also satisfies a MacWilliams identity for the complete weight enumerator  for \cite{We}. 
 
  In looking for an interpretation of the weight enumerator of $\Rep(\H)$ the same principle as in Section \ref{Crapo-Rota} applies here. The trivial representation replaces the identity of the group and dimension stands in for counting.
  
  \begin{defn}
  Let $\rho \in \widehat{\G}.$ The {\bf weight} of $\rho$ is $w(\rho) =n- |\triv(\rho)|.$ The {\bf weight enumerator} of $\Rep(\H)$ is
  $$W_{\Rep(\H)}(t) = \displaystyle\sum_{\rho \in \Rep(\H)} (\dim \rho)~t^{w(\rho)}.$$
  \end{defn}
 
 \begin{thm} \label{Dual Greene} \cite{We} (Greene's theorem for $\Rep(\H)$)
 Let $(E,r)$ be the polymatroid $P(\H)$ and $q = |\Gamma|.$ Then
 $$W_{\Rep(\H)}(t) = t^{r(E)} \cdot (1-t)^{n-r(E)} T_{P(\H)} \bigg( \frac{1}{t}, \frac{1 + (q-1)t}{1-t} \bigg).$$
 
 \end{thm}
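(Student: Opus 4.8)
The plan is to mirror the deletion/contraction proof of Theorem~\ref{Greene for groups}, using the fact that $\Rep(\H)$ realizes $P^{\ast_\a}(\H)$ (with $\a = \a[1]$, since $\G = \Gamma^n$) together with the Tutte polynomial recursion \eqref{polymatroid Tutte recursion}. First I would check the base case $n = 1$: when $\H = \{1_\Gamma\}$ both sides reduce to $q - 1$ (note $\Rep(\H) = \widehat{\Gamma}$ and $\sum_{\rho} \dim\rho = q$, with the trivial representation contributing weight $0$ and the rest weight $1$); when $\H = \Gamma$, $\Rep(\H)$ is just the trivial representation, both sides equal $1$. Then for the inductive step I would fix $x \in E$ and produce a deletion/contraction recursion for $W_{\Rep(\H)}(t)$ by splitting $\Rep(\H)$ according to whether $x \in \triv(\rho)$.

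The key combinatorial input is the extension lemma: writing each $\rho \in \Rep(\H)$ as $\bigotimes_{y \in E}\rho_y$, the representations with $x \in \triv(\rho)$ are exactly the extensions $\bar\sigma$ of $\sigma \in \Rep(\H_{E-\{x\}})$, and $\dim\bar\sigma = \dim\sigma$, $w(\bar\sigma) = w_{E-\{x\}}(\sigma)$ (weight computed in length $n-1$); these correspond to the \emph{deletion} side. The representations with $x \notin \triv(\rho)$ have $w(\rho) = w(\pr_{E-\{x\}}\rho) + 1$ and, after projecting, range over $\Rep$ of a subgroup realizing $P(\H)^{\ast_\a}\setminus x = (P(\H)/x)^{\ast_\a}$ — here I would invoke Proposition~\ref{a-duality prop}(3)/(4) to identify which contraction/deletion of $P(\H)$ appears. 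Counting multiplicities (each such $\rho$ restricts to a fixed projection with a known multiplicity, governed by $\dim\Rep_2$ of the appropriate quotient, i.e.\ a power of $q$), I expect to arrive at
\begin{equation*}
W_{\Rep(\H)}(t) = q^{\,r^\ast(E) - r^\ast(E-\{x\})}\cdot t \cdot W_{\Rep(\H_{E-\{x\}})}(t) + (1-t)\,W_{\Rep(\H)/x\text{-side}}(t),
\end{equation*}
the precise exponents to be read off from the $\a$-dual rank function $r^\ast$ and matched, via $r^\ast(E)-r^\ast(E-\{x\}) = 1 - r(\{x\})$ and the analogous identity, against the two branches of \eqref{polymatroid Tutte recursion} with the roles of $u$ and $v$ (equivalently of the two evaluation arguments) interchanged relative to Theorem~\ref{Greene for groups}. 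Feeding in the induction hypothesis, clearing the $t$ and $(1-t)$ prefactors, and applying \eqref{polymatroid Tutte recursion} to $P(\H)$ at the point $\bigl(\tfrac1t,\ \tfrac{1+(q-1)t}{1-t}\bigr)$ should collapse the two terms to $t^{r(E)}(1-t)^{n-r(E)}T_{P(\H)}\bigl(\tfrac1t, \tfrac{1+(q-1)t}{1-t}\bigr)$, as desired.

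The main obstacle I anticipate is bookkeeping the multiplicities and the exponents of $q$ correctly: one must track both the multiplicity with which an irreducible of $\widehat{\G_{E-\{x\}}}$ reappears (with the $x$-coordinate non-trivial) inside $\Rep(\H)$ and the $\dim$-weighted sums, and then verify that the resulting exponents are precisely $r^\ast(E)-r^\ast(E-\{x\})$ and $1 - r^\ast_{\text{(appropriate)}}(\{x\})$ so that they line up with \eqref{polymatroid Tutte recursion}. A secondary subtlety is confirming that $x$ is, in the relevant dual sense, neither a loop nor a coloop in the generic case, and handling the degenerate cases ($x$ a coloop of $P(\H)$, so trivial in every $\rho$; or $x$ a loop of $P(\H)$) separately, exactly as in Proposition~\ref{matroid Tutte recursion}. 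Once the recursion is pinned down, the algebra is the same pattern as in the proof of Theorem~\ref{Greene for groups}, just with $u$ and $v$ swapped.
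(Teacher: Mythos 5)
The paper does not actually supply a proof of Theorem \ref{Dual Greene}; it is deferred entirely to the reference \cite{We} (``in preparation''), so there is no argument in the text to compare yours against.  Evaluated on its own, though, your sketch has a structural error that would derail the induction.

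Your proposed recursion places the factor $q^{\,r^\ast(E)-r^\ast(E-\{x\})}\cdot t$ on the $W_{\Rep(\H_{E-\{x\}})}$ term and $(1-t)$ on the other.  This is backwards.  If you substitute $u=\tfrac{1}{t},\ v=\tfrac{1+(q-1)t}{1-t}$ into the recursion (\ref{polymatroid Tutte recursion}) for $T_{P(\H)}$ and multiply through by $t^{r(E)}(1-t)^{n-r(E)}$, the deletion branch (the one involving $T_{P(\H)\setminus x}$, whose dual side is realized by $\Rep(\H_{E-\{x\}})$) picks up $\bigl(\tfrac{1-t}{t}\bigr)^{r(E)-r(E-x)}$ and, after absorbing the prefactors and applying the inductive hypothesis, collapses to $(1-t)\,W_{\Rep(\H_{E-\{x\}})}(t)$; the contraction branch becomes $q^{1-r(\{x\})}\,t\,W_{\Rep(\KK)}(t)$, where $\KK=\pr_{E-\{x\}}(\ker\pr_x)$ realizes $P(\H)/x$.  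So the recursion you need is
$$W_{\Rep(\H)}(t)=(1-t)\,W_{\Rep(\H_{E-\{x\}})}(t)+q^{\,1-r(\{x\})}\,t\,W_{\Rep(\KK)}(t),$$
not the one you wrote.  This is consistent with the split you identify via the extension lemma: the $\rho$ with $x\in\triv(\rho)$ contribute exactly $W_{\Rep(\H_{E-\{x\}})}(t)$ (no extra $q$-power), but that is only part of the $(1-t)$ coefficient; the $-\,t\,W_{\Rep(\H_{E-\{x\}})}(t)$ piece must come out of the other sum.

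That points to the second, deeper gap.  You assert that the $\rho\in\Rep(\H)$ with $x\notin\triv(\rho)$, ``after projecting, range over $\Rep$ of a subgroup realizing $(P(\H)/x)^{\ast_\a}$'' with a uniform multiplicity.  They do not.  What one can prove (say, by M\"obius inversion over $\L_{P^\ast}$, using $\sum_{T\supseteq S}g(T)=q^{n-|S|}/|\H_{E-S}|$ for $g(S)=\sum_{\triv(\rho)=S}\dim\rho$, together with $|\KK_U|=|\H_{U\cup\{x\}}|/|\H_x|$) is the identity
$$\sum_{\rho\in\Rep(\H)}(\dim\rho)\,t^{\,w(\pr_{E-\{x\}}\rho)}\;=\;q^{\,1-r(\{x\})}\,W_{\Rep(\KK)}(t),$$
which involves \emph{all} of $\Rep(\H)$, including those trivial at $x$.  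Subtracting the $x\in\triv(\rho)$ part (which equals $W_{\Rep(\H_{E-\{x\}})}(t)$) and multiplying the remainder by $t$ is what produces the correct recursion; there is no clean bijective/constant-multiplicity correspondence between $\{\rho:x\notin\triv(\rho)\}$ and $\Rep(\KK)$.  Your base case is also incomplete: for $n=1$ you must verify the identity for an arbitrary subgroup $\H\le\Gamma$, not only $\H=\{1\}$ and $\H=\Gamma$ (it does hold: both sides equal $1+t(|\Gamma/\H|-1)$).  Finally, it is worth noting that a shorter route avoids deletion/contraction entirely: expand the right-hand side via the corank--nullity definition of $T_{P(\H)}$ to get $\sum_{S}q^{|S|-r(S)}t^{|S|}(1-t)^{n-|S|}$, substitute $q^{|S|-r(S)}=\sum_{T\supseteq E-S}g(T)$, swap the order of summation, and use $\sum_{S\subseteq T}t^{n-|S|}(1-t)^{|S|}=t^{\,n-|T|}$; this gives $\sum_T g(T)t^{n-|T|}=W_{\Rep(\H)}(t)$ directly, which is the analogue of Greene's original argument and sidesteps the multiplicity bookkeeping altogether.
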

 
 \begin{proof}
 See \cite{We}.
 \end{proof}

\begin{thm} (MacWilliams identity for groups)
 Let $(E,r)$ be the polymatroid $P(\H)$ and $q = |\Gamma|.$ Then
$$W_{\Rep(\H)}(t) = \frac{(1 + (q-1)t)^n}{q^{r(E)}} W_\H\bigg(\frac{1-t}{1 + (q-1)t}\bigg).$$
\end{thm}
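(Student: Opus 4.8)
The plan is to derive the identity purely formally from the two Greene-type theorems already established, exactly as the classical MacWilliams identity is extracted from Greene's theorem for linear codes. Write $r = r(E)$ for brevity. Theorem~\ref{Greene for groups}, with a dummy variable $s$, reads
\[
W_\H(s) = (1-s)^{r}\, s^{\,n-r}\, T_{P(\H)}\!\left(\frac{1+(q-1)s}{1-s},\ \frac{1}{s}\right),
\]
and Theorem~\ref{Dual Greene} reads
\[
W_{\Rep(\H)}(t) = t^{r}\,(1-t)^{n-r}\, T_{P(\H)}\!\left(\frac{1}{t},\ \frac{1+(q-1)t}{1-t}\right).
\]
The key point is that the substitution $s = \dfrac{1-t}{1+(q-1)t}$ turns the Tutte evaluation in the first formula into exactly the Tutte evaluation in the second.

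First I would record the two elementary computations $1-s = \dfrac{qt}{1+(q-1)t}$ and $1+(q-1)s = \dfrac{q}{1+(q-1)t}$; dividing gives $\dfrac{1+(q-1)s}{1-s} = \dfrac{1}{t}$, and inverting $s$ gives $\dfrac{1}{s} = \dfrac{1+(q-1)t}{1-t}$, so that $T_{P(\H)}\big(\tfrac{1+(q-1)s}{1-s},\tfrac{1}{s}\big) = T_{P(\H)}\big(\tfrac{1}{t},\tfrac{1+(q-1)t}{1-t}\big)$. Next I would substitute $(1-s)^{r} = (qt)^{r}(1+(q-1)t)^{-r}$ and $s^{\,n-r} = (1-t)^{n-r}(1+(q-1)t)^{-(n-r)}$ into $W_\H(s)$ to obtain
\[
W_\H\!\left(\frac{1-t}{1+(q-1)t}\right) = \frac{(qt)^{r}\,(1-t)^{n-r}}{(1+(q-1)t)^{n}}\; T_{P(\H)}\!\left(\frac{1}{t},\ \frac{1+(q-1)t}{1-t}\right).
\]
Multiplying both sides by $\dfrac{(1+(q-1)t)^{n}}{q^{r}}$ collapses $(qt)^{r}/q^{r}$ to $t^{r}$, and the right-hand side becomes precisely the Greene expression for $W_{\Rep(\H)}(t)$ from Theorem~\ref{Dual Greene}; that finishes the proof.

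The only point requiring a word of care is that $T_{P(\H)}$ is in general not a polynomial, since the ranks of $P(\H)$ need not be integers; accordingly the displays above should be read as identities of functions of $t$ on an open subinterval of $(0,1)$ on which every base raised to a non-integer power is positive and on which Theorems~\ref{Greene for groups} and~\ref{Dual Greene} apply. Since both sides of the asserted MacWilliams identity are continuous on $[0,1)$ — indeed $W_\H$, $W_{\Rep(\H)}$ and $(1+(q-1)t)^{n}$ are ordinary polynomials and the substituted argument $\tfrac{1-t}{1+(q-1)t}$ is well defined there — agreement on a subinterval forces agreement everywhere, which is the statement. I do not expect any genuine obstacle: the entire argument is the substitution $s \mapsto \tfrac{1-t}{1+(q-1)t}$ together with bookkeeping of the prefactors, and the only mild subtlety is tracking the domain of validity inherited from the two Greene theorems.
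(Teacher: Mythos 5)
Your proposal is correct, and it is precisely the argument the paper invokes: the paper's proof is the single line ``The proof is identical to Greene's proof of the MacWilliams identity for linear codes,'' and Greene's proof is exactly the change of variables $s = \tfrac{1-t}{1+(q-1)t}$ applied to the two Greene-type formulas (Theorem \ref{Greene for groups} for $\H$ and Theorem \ref{Dual Greene} for $\Rep(\H)$), followed by the bookkeeping of prefactors that you carry out. Your closing paragraph about the domain of validity -- that $T_{P(\H)}$ has non-integer exponents so the identity should first be read on a suitable subinterval of $(0,1)$ and then extended, using that both sides of the final identity are genuine polynomials in $t$ (note $q^{r(E)} = |\H|$ is an integer) -- addresses a subtlety the paper leaves implicit and is a worthwhile addition.
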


\begin{proof}
The proof is identical to Greene's proof of the MacWilliams identity for linear codes \cite{Gr}.
\end{proof}

\section{Combinatorial Laplacian}

In this last  section we examine the connection between $\Rep(\H)$ and the combinatorial Laplacian.  Throughout this section $E = [n].$ If a polymatroid $(E, r_\H)$ has no elements $x$ such that $r_\H(E) > r_\H(E-x),$ then $\Rep(\H)$ has a concrete realization as the eigenspaces of the combinatorial Laplacian of an associated quotient space described below.  This will lead us to relate the eigenvalues of the combinatorial Laplacian of the top dimension of these quotient spaces to the code weight enumerator of $\H$ viewed as a code with alphabet $\Gamma.$  This result was originally proved for binary matroids by the third author, and provides another example of how $\Rep(\H)$ behaves like Whitney's subspace realization of the dual of a nonplanar graph \cite{Xu}.   We finish the section by using the combinatorial Laplacian to establish  a topological interpretation of the Crapo-Rota critical theorem for finite groups.

The combinatorial Laplacian can be  defined for any (locally finite) regular CW complex.  We will work with regular CW complexes all of whose {\it closed} cells are simplices.  Since Stanley's paper,  \cite{St},  these types of complexes are usually called {\bf simplicial posets} in the combinatorics literature.  The name is derived from the fact that in the face poset  of the complex every lower interval is isomorphic to the face poset of a simplex.  Other names for simplicial posets include Boolean cell complexes and semi-simplicial complexes.  Independent of the name, the reader will not go wrong in thinking of them as  simplicial complexes where there may be several faces with the same vertex set.  A bigon, two vertices with two parallel edges,  is the smallest possible example of a simplicial poset which is not a simplicial complex.  

Let $X$ be a finite simplicial poset. We use  $C_j(X)$ for the simplicial $j$-chains of $X$ with complex coefficients. Equivalently, we denote the $j$-dimensional faces of $X$ by $F_j(X)$ and $C_j(X)=\C[F_j(X)].$  When a specific identification $u \leftrightarrow u^\ast$ between $C_j(X)$ and its dual $C_j(X)^\ast$ is needed, we use the one implied by declaring the preferred basis $\{w_\sigma: \sigma \in F_j(X)\}$ an orthonormal basis of $C_j(X).$ As usual, $\partial_j(X): C_j(X) \to C_{j-1}(X)$ denotes the boundary map.    The empty set is a face of dimension minus one, so $C_{-1}(X) = \C[\emptyset]$ and $C_j=\{0\}$ for all $j < -1$ and $j > \dim X.$ For all $j$ we define 
$\Delta_j(X): C_j(X) \to C_j(X)$ by
$$\Delta_j(X) = \partial^t_j(X) \circ \partial_j(X) + \partial_{j+1}(X) \circ \partial^t_{j+1}(X),$$
where $t$ indicates the transpose.  

The  $\Delta_j(X)$ are collectively known as the {\bf combinatorial Laplacian} of $X.$ The combinatorial Laplacian has several properties.  Among them: 

\begin{itemize}
\item Each $\Delta_j(X)$ is  diagonalizable with nonnegative real eigenvalues.
\item $\ker \Delta_j(X) \cong H^j(X; \C) \cong H_j(X;\C).$ (Hodge theorem)
\item It is possible to approximate the usual Laplacian on a compact Riemannian manifold by using the combinatorial Laplacian on an appropriate sequence of triangulations of the maniold \cite{DP}.
\end{itemize}

Several authors have studied situations in which the eigenvalues of the combinatorial Laplacian are integers.  These include, chessboard complexes \cite{FH}, matroid independence complexes \cite{KRS}, matching complexes \cite{DW}, shifted complexes \cite{DR}.  Representation theory has figured in some of these results (\cite{DW}, \cite{FH}). 
 
 In order to describe the quotient spaces associated to $\H \le \G = \displaystyle\sum^n_{i=1} \Gamma_i,$ we begin by viewing  every $\Gamma_i$ as a zero-dimensional simplicial complex with $|\Gamma_i|$ vertices.  Let $X$ be the simplicial join,
$$ X = \Gamma_1\star \cdots \star \Gamma_n.$$
The facets of $X$ are easily identified with the elements of $\G$ and the vertices of $X$ are equal to the disjoint union of all of the $\Gamma_i,$ where all of the $\Gamma_i$ are disjoint sets even if they are isomorphic as groups.  In general, the faces of $X$ can be identified with the union of all of the $\G_S$ as $S$ runs over all of the subsets of $[n].$ In particular $|F_j(X)| = \displaystyle\sum_{|S| = j+1} |\G_S|.$

Let $\G$ act on the vertices of $X$ so that $(\gamma_1, \dots, \gamma_n) \cdot v_i = v_i \cdot \gamma_i^{-1},~v_i \in \Gamma_i.$  Extend this action to all the faces of $X.$ For instance, if $\gamma=(\gamma_1, \dots, \gamma_n) \in \G$ and $F=v_1 \star \cdots \star v_n$ is a facet of $X,$ then $\gamma \cdot F$ is $v_1 \cdot \gamma^{-1}_1 \star \cdots \star v_n \gamma^{-1}_n.$ Restricting the $\G$-action to $\H$ gives us a quotient space $X/\H$ which is a simplicial poset.  The faces of $X/\H$ can be identified with the right cosets of $\G_S/\H_S$  where $S$ runs over all subsets of $[n].$ When $\Gamma_i = \Z/2\Z$ for all $i,$ such quotient spaces were studied under the name {\bf binary spherical quotients} by the first author in \cite{Sw-t} and \cite{Sw}.  The first determination of the eigenvalues and eigenvectors of the top dimension of the combinatorial Laplacian for binary spherical quotients  was by the third author in \cite{Xu}.   In this case, $M[\H]$ is a binary matroid with $\H$  a subspace of $(\Z/2\Z)^n$ and $M[\H]^\ast$ is represented by the subspace $\H^\perp$ of vectors in $(\Z/2\Z)^n$ orthogonal to $\H.$

\begin{thm} \label{binary eigenvalues} \cite[Theorem 6]{Xu}
Suppose $M[\H]$ is a binary matroid with no coloops.  Then each $h \in \H^\perp$ contributes an eigenvalue of $2 |Z(h)|$ to the eigenvalues of $\Delta_{n-1}(X/\H).$
\end{thm}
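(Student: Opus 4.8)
The plan is to obtain the whole spectrum of $\Delta_{n-1}(X/\H)$ at once by diagonalizing the group action: since $\G=(\Z/2\Z)^n$ is abelian, the $(n-1)$-chains of $X/\H$ decompose into one-dimensional $\G$-isotypic lines, and every such line is automatically an eigenline of the (equivariant) Laplacian.

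First I would fix notation. Here $X=\Gamma_1\star\cdots\star\Gamma_n$ is the boundary $\partial CP_n$ of the $n$-dimensional cross polytope, a triangulation of $S^{n-1}$ whose facets are the elements of $\G$ and whose codimension-one faces are, for each $i\in[n]$, indexed by $\G_{[n]\setminus\{i\}}$ (the face obtained from a facet by deleting its vertex in coordinate $i$). Because $X/\H$ has dimension $n-1$ it has no $n$-chains, so $\Delta_{n-1}(X/\H)=\partial^t_{n-1}(X/\H)\circ\partial_{n-1}(X/\H)$. As recorded in the excerpt, $C_{n-1}(X/\H)=\C[\G/\H]$ and $C_{n-2}(X/\H)=\bigoplus_{i=1}^n\C[\G_{[n]\setminus\{i\}}/\H_{[n]\setminus\{i\}}]$, and $\G/\H$ acts on $X/\H$ permuting the cells (changing orientations only by signs), so on the preferred orthonormal bases it acts by signed permutation matrices. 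Hence $\partial_{n-1}(X/\H)$, its transpose, and therefore $\Delta_{n-1}(X/\H)$ are $\G/\H$-equivariant.

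Next I would diagonalize $C_{n-1}(X/\H)=\C[\G/\H]$ by characters. As the regular representation of the abelian group $\G/\H$, it is the orthogonal direct sum of the lines $\C\,\bar e_h$, $\bar e_h=\sum_{g\H}\chi_h(g)\,w_{g\H}$, where $\chi_h$ ranges over the characters of $\G$ that are trivial on $\H$; under the identification $\widehat{\G}\cong\G$ these are exactly the elements $h\in\H^\perp$, and $\Rep(\H)=\{\chi_h:h\in\H^\perp\}$ by Lemma \ref{normal subgroups}. Since $\Delta_{n-1}(X/\H)$ is equivariant and each character occurs once, each $\bar e_h$ is an eigenvector; as $\{\bar e_h:h\in\H^\perp\}$ is a basis and $|\H^\perp|=|\G/\H|=\dim C_{n-1}(X/\H)$, it only remains to compute the eigenvalue attached to each $h$. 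To do this I would expand $\partial_{n-1}(X/\H)(\bar e_h)$: the boundary of a facet $g\H$ is an alternating sum over $i\in[n]$ of the codimension-one face got by forgetting coordinate $i$. Here the no-coloops hypothesis enters decisively: for every $i$ we have $r_\H(E)=r_\H(E\setminus\{i\})$, hence $|\H_{[n]\setminus\{i\}}|=|\H|$, so $\pr_{[n]\setminus\{i\}}:\G/\H\to\G_{[n]\setminus\{i\}}/\H_{[n]\setminus\{i\}}$ is exactly $2$-to-$1$ and the two facet-cosets over a given codimension-one face are $g\H$ and $(g+e_i)\H$. Collecting terms, the coefficient of a codimension-one face of type $i$ in $\partial_{n-1}(X/\H)(\bar e_h)$ is, up to sign, $\chi_h(g)\bigl(1+(-1)^{h_i}\bigr)$, which is $2\chi_{h|_{[n]\setminus\{i\}}}(\cdot)$ if $h_i=0$ (that is $i\in Z(h)$) and $0$ otherwise. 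Since $h\in\H^\perp$ and $h_i=0$ force $h|_{[n]\setminus\{i\}}$ to be trivial on $\H_{[n]\setminus\{i\}}$ (the Extension lemma), these terms are genuinely nonzero, and they lie in pairwise orthogonal summands, giving $\partial_{n-1}(X/\H)(\bar e_h)=2\sum_{i\in Z(h)}\pm\,\iota_i\bigl(\bar e_{h|_{[n]\setminus\{i\}}}\bigr)$.

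Finally, because $\bar e_h$ is an eigenvector, its eigenvalue is the Rayleigh quotient $\|\partial_{n-1}(X/\H)\bar e_h\|^2/\|\bar e_h\|^2$. Using $\|\bar e_h\|^2=|\G/\H|$ and $\|\iota_i(\bar e_{h|_{[n]\setminus\{i\}}})\|^2=|\G_{[n]\setminus\{i\}}/\H_{[n]\setminus\{i\}}|=\tfrac12|\G/\H|$ (again no coloops gives $|\H_{[n]\setminus\{i\}}|=|\H|$), orthogonality yields $\|\partial_{n-1}(X/\H)\bar e_h\|^2=4\cdot|Z(h)|\cdot\tfrac12|\G/\H|=2|Z(h)|\cdot|\G/\H|$, so the eigenvalue equals $2|Z(h)|$. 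Letting $h$ run over $\H^\perp$ produces all $\dim C_{n-1}(X/\H)$ eigenvalues, with multiplicities. The main obstacle is the bookkeeping in the boundary computation: keeping the quotient map, orientation signs, and the character basis straight simultaneously, and—most importantly—pinpointing where the coloop-free hypothesis is used (it is precisely what makes each codimension-one face carry the full $|\H|$ facets above it, turning the boundary's defect into a clean factor of $2$; dropping it replaces the eigenvalue by $2\sum_{i\in Z(h)}|\H_{[n]\setminus\{i\}}|/|\H|$, as one already sees for $\H=\G$). The signs themselves are harmless, since only the squared norm—a sum over orthogonal summands—is needed.
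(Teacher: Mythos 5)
Your proof is correct, and it takes a genuinely different route from the paper's. The paper obtains Theorem \ref{binary eigenvalues} as the $\Gamma_x=\Z/2\Z$ special case of Theorem \ref{group eigenvalues}, whose proof pulls eigenvectors of $\Delta_{n-1}(X/\H)$ \emph{up} to $X$ via $p^\ast$ (this is case one of the proof of Theorem \ref{integer eigenvalues}: the no-coloops hypothesis makes $\H$ act freely on $F_{n-2}(X)$, so the matrix of $\Delta_{n-1}(X/\H)$ agrees entry-by-entry with a corresponding block of $\Delta_{n-1}(X)$), and then reads off the eigenvalue from the known eigenstructure of $\Delta_{n-1}(X)$, computed in Proposition \ref{eigenvalues of Delta(X)} via the Duval--Reiner tensor formula \cite[Proposition 4.9]{DR} for Laplacians of simplicial joins. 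You instead stay on $X/\H$ the whole time: you use that $\G$ is abelian to split $C_{n-1}(X/\H)=\C[\G/\H]$ into one-dimensional $\G/\H$-character lines $\C\,\bar e_h$, $h\in\H^\perp$, note that equivariance of $\Delta_{n-1}(X/\H)$ forces each line to be an eigenline, and then compute the eigenvalue directly from a Rayleigh quotient, using no-coloops to guarantee each codimension-one face of type $i$ has exactly two facets over it, giving the coefficient $\chi_h(g)\bigl(1+(-1)^{h_i}\bigr)$. Both proofs use the no-coloops hypothesis at the identical place (freeness of $\H$ on $F_{n-2}(X)$), but yours is more self-contained and elementary for the binary case, at the cost of leaning hard on abelian-ness; the paper's version extends verbatim to nonabelian $\G$. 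One point you gloss over: the assertion that the two facets over a codimension-one face of type $i$ carry the \emph{same} relative boundary sign, so the coefficient is $\chi_h(g)+\chi_h(g+e_i)$ and not $\chi_h(g)-\chi_h(g+e_i)$. This is what yields $2|Z(h)|$ rather than $2(n-|Z(h)|)$, and it is \emph{not} absorbed by the passage to squared norms, because the two facets land in the same orthogonal summand of $C_{n-2}$; it is a genuine (and true) fact about orientations in the cross-polytope boundary, namely that the sign $[\partial\sigma:F]$ for a type-$i$ face $F$ depends only on $i$ and not on which of the two facets $\sigma$ is. That should be stated and justified rather than folded into the ``up to sign.''
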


For $h \in \H^\perp,$ its code weight enumerator, $w(h),$ equals $ n-Z(h).$ Hence, for (binary) linear codes $\H,$ the eigenvalues of $\Delta_{n-1}(X/\H)$  carry the same information as the code weight enumerator of $\H.$   The top dimensional eigenvalues of the combinatorial Laplacian of a matroid independence complex determine the homotopy type of the complex \cite{KRS}.  In contrast,  the top dimensional eigenvalues of the combinatorial Laplacian of  $X/\H$ do not determine the homotopy type of $X/\H.$ 

\begin{example}
Consider the following  two binary matrices,
$$A = \begin{bmatrix} 1 & 1 & 0 & 0 & 0 & 0 \\ 0 & 0 & 1 & 1 & 0 & 0\\ 0 & 0 & 0 & 0 & 1 & 1\\ \end{bmatrix}, \ \  B = \begin{bmatrix}  1 & 0 & 0 & 1 & 1 & 1 \\ 0 & 1 & 0 & 1 & 0 & 0\\ 0 & 0 & 1 & 1 & 0 & 0 \end{bmatrix}.$$
Set $\H(A)$ and $\H(B)$ to be their corresponding row spaces in $(\Z/2\Z)^6.$ Let $X$ be the join of $\Z/2\Z$ with itself six times.  Equivalently, $X$ is the boundary of the $5$-dimensional cross polytope.  Theorem \ref{binary eigenvalues} tells us that $\Delta_5(X/\H(A))$ and $\Delta_5(X/\H(B))$ have the same spectrum.  On the other hand, $X/\H(A)$ is homeomorphic to $S^5$ \cite[Theorem 4]{Sw},  and $X/\H(B)$ is homeomorphic to the double suspension of $\R P^3$ \cite[Theorem 6.3.1]{Sw-t}.
\end{example}

What if the binary matroid represented by $\H$ has a coloop?  Suppose $x \in E$ is a coloop of the binary matroid $M[\H].$  Let $\G' = \G_{E-\{x\}}, \H'=\H_{E-\{x\}}$ and set $X'/\H'$ to be the quotient space coming from $\G'$ and $\H'.$ Then $X/\H$ is a cone over $X'/\H'$ \cite[Proposition 3]{Sw} and $\lambda$ is an eigenvalue with multiplicity $m$ for $\Delta_{n-1}(X/\H)$ if and only if $\lambda-1$ is an eigenvalue for $\Delta_{n-2}(X'/\H')$ with multiplicity $m$ \cite[Corollary 4.11]{DR}. In particular, all of the eigenvalues of $\Delta_{n-1}(X/\H)$ are integers.  This is true in much greater generality. 

Let $\J$ be a finite group and $U$ a $\J$-set.     We say that the action of $\J$ is {\bf \ef} if the action of $\J$ on $U$ modulo its kernel is free.  For instance, for all $x \in E,~\H$ acts \ef ly on $\Gamma_x$ by restricting the action to the $x$-coordinate of $h \in \H.$  Let $Y_1, \dots, Y_n$ be finite disjoint $\J$-sets on which $\J$ acts \ef ly.  Think of each $Y_i$ as a discrete subset of vertices and set $Y=Y_1 \star \cdots \star Y_n$ to be the join of all of the $Y_i$ as a simplicial complex.  Again, since $\J$ acts on the vertices of $Y$ it also acts on all of the faces of $Y$ and the quotient space $Y/\J$ is a simplicial poset. 

\begin{thm} \label{integer eigenvalues}
  The eigenvalues of $\Delta_{n-1}(Y/\J)$ are integers.
\end{thm}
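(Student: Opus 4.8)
The plan is to decompose the chain groups of $Y/\J$ into isotypic-like pieces indexed by irreducible representations of $\J$, show that the top-dimensional Laplacian acts on each piece with an explicit integer eigenvalue, and conclude that all eigenvalues are integers. The key structural input is that each $Y_i$ is an effectively free $\J$-set, which means that after modding out by the kernel $N_i$ of the action on $Y_i$, the quotient $\J/N_i$ acts freely on $Y_i$; equivalently $\C[Y_i]$ is a free $\C[\J/N_i]$-module, hence as a $\J$-representation $\C[Y_i]$ is a sum of copies of $\C[\J/N_i]$, which in turn is $\bigoplus_{\rho:\, N_i \subseteq \ker\rho} (\dim\rho)\,\rho$.

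First I would set up the cochain complex. Since $Y = Y_1 \star \cdots \star Y_n$, the top chain group is $C_{n-1}(Y) = \C[Y_1] \otimes \cdots \otimes \C[Y_n]$ and the codimension-one chains are $C_{n-2}(Y) = \bigoplus_{i=1}^n \C[Y_1]\otimes\cdots\otimes\widehat{\C[Y_i]}\otimes\cdots\otimes\C[Y_n]$, where the hat means omit the $i$-th factor (here I am using $\C[Y_i]\to\C[\emptyset]=\C$ for the omitted coordinate). Taking $\J$-invariants commutes with forming the Laplacian (since the boundary maps are $\J$-equivariant and the inner product is $\J$-invariant, $\Delta_{n-1}$ preserves the isotypic decomposition), and $C_{n-1}(Y/\J) = C_{n-1}(Y)^\J$, similarly for $C_{n-2}$. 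So it suffices to diagonalize $\Delta_{n-1}$ on $C_{n-1}(Y)$ as a whole and read off that all eigenvalues are integers; the restriction to the $\J$-invariant part only keeps a subset of those eigenvalues.

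Next, I would diagonalize $\Delta_{n-1}(Y)$ on the full complex. Because $\Delta_{n-1}(Y) = \partial_{n-1}^t\partial_{n-1} + \partial_n\partial_n^t$ and here $C_n(Y)=\{0\}$ (the join of $n$ vertex sets has dimension $n-1$), we have $\Delta_{n-1}(Y) = \partial_{n-1}^t\partial_{n-1}$. A standard computation for joins gives that on a basis facet $\sigma = y_1\star\cdots\star y_n$, the operator $\partial_{n-1}^t\partial_{n-1}$ sends $w_\sigma$ to a sum over faces $\tau$ of codimension one and back up, and the resulting operator is $\sum_{i=1}^n (I - P_i)$ composed appropriately, where $P_i$ is (up to normalization) the rank-one projection in the $i$-th tensor factor onto the all-ones vector of $\C[Y_i]$; more precisely $\Delta_{n-1}(Y)$ acts as $\sum_{i=1}^n \big( I^{\otimes(i-1)} \otimes (|Y_i|\, \mathrm{Id} - J_i)/|Y_i| \otimes I^{\otimes(n-i)}\big)$ suitably normalized, where $J_i$ is the all-ones matrix; in any case each tensor factor $i$ contributes either $|Y_i|$ (on the orthogonal complement of the all-ones vector) or $0$ (on the all-ones vector), and the eigenvalue of $\Delta_{n-1}(Y)$ on a tensor-product eigenvector $v_1\otimes\cdots\otimes v_n$ is $\sum_{i:\,v_i\perp\mathbf{1}} |Y_i|$, an integer. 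I would work out the exact normalization constant carefully — this is the one place where an off-by-a-factor error is easy — but the upshot is that the full-complex spectrum consists of sums of a subset of the integers $\{|Y_1|,\dots,|Y_n|\}$, hence is integral; restricting to $C_{n-1}(Y/\J)^\J = C_{n-1}(Y)^\J$ keeps only some of these eigenvalues, so $\Delta_{n-1}(Y/\J)$ also has integer spectrum.

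**The main obstacle** will be pinning down the precise form of $\Delta_{n-1}(Y)$ on a join and its exact normalization: the boundary map on a simplicial poset built as an iterated join involves sign conventions and the counting of codimension-one faces through each top face, and one must check that the "cross terms" between different omitted coordinates $i \ne j$ cancel so that $\Delta_{n-1}$ genuinely splits as a sum of commuting operators, one per tensor factor. Once that algebra is in hand — and it is essentially the computation already implicit in \cite{Xu} for the binary case, now with $\Z/2\Z$ replaced by arbitrary effectively-free $\J$-sets — the integrality is immediate and the effective-freeness hypothesis is used only to guarantee that $Y/\J$ is a well-defined simplicial poset (no face is identified with itself by a nontrivial group element), not in the spectral computation itself.
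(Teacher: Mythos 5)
Your reduction hinges on the claim that $\Delta_{n-1}(Y/\J)$ coincides with the restriction of $\Delta_{n-1}(Y)$ to the $\J$-invariant subspace $C_{n-1}(Y)^{\J}$, so that its spectrum is a subset of the (integral) spectrum of $\Delta_{n-1}(Y)$. That claim is false in general, and this is precisely the difficulty the paper's proof is built to circumvent. The point is that $\Delta_{n-1}=\partial_{n-1}^{t}\partial_{n-1}$ involves the adjoint of $\partial_{n-1}$, which depends on the inner products on both $C_{n-1}$ and $C_{n-2}$. The natural vector-space identification of $C_{n-2}(Y/\J)$ with $C_{n-2}(Y)^{\J}$ sends an orbit-basis vector $w_{[\mu]}$ to $\sum_{\nu\in \J\mu}w_\nu$, whose norm is $\sqrt{|\J\mu|}$; this is an isometry only if all $\J$-orbits on $F_{n-2}(Y)$ have the same size, i.e.\ only if the $\J$-action on codimension-one faces is free. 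When it is not, the two Laplacians genuinely differ. A minimal counterexample: take $n=2$, $\J=\Z/2\Z$, $Y_1=\{a,b\}$ with $\J$ swapping $a,b$ (effectively free, kernel trivial), and $Y_2=\{c\}$ with $\J$ acting trivially (effectively free, kernel all of $\J$). Then $Y$ is the path $a$--$c$--$b$, $\Delta_1(Y)=\begin{pmatrix}2&1\\1&2\end{pmatrix}$ has eigenvalues $1,3$, and the $\J$-invariant subspace is spanned by $(1,1)$ with eigenvalue $3$. But $Y/\J$ is a single edge, so $\Delta_1(Y/\J)=(2)$, eigenvalue $2$, which is \emph{not} in the spectrum of $\Delta_1(Y)$ at all. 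So the sentence asserting that restriction to invariants "only keeps a subset of those eigenvalues" is where the argument breaks, and the later remark that effective freeness is "not used in the spectral computation" is exactly wrong: effective freeness, together with an induction on $|\J|$, is what the paper uses to repair this.

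The paper's proof splits into two cases after reducing (WLOG) to $\J$ acting freely on $F_{n-1}(Y)$. If $\J$ also acts freely on $F_{n-2}(Y)$, then orbit sizes are uniform in both degrees and the pullback $p^{\ast}$ of the quotient map $p:C_{n-1}(Y)\to C_{n-1}(Y/\J)$ intertwines the two Laplacians, so eigenvalues transfer; this is close to the invariant-restriction picture you had in mind, and is where your argument would be salvageable. If $\J$ does \emph{not} act freely on $F_{n-2}(Y)$, there is a coordinate $j$ with $\KK_j=\bigcap_{i\neq j}\J_i$ nontrivial; the paper then passes to the intermediate quotient $Y'=Y_1\star\cdots\star(Y_j/\KK_j)\star\cdots\star Y_n$, notes $Y/\J = Y'/(\J/\KK_j)$, and invokes the inductive hypothesis for the strictly smaller group $\J/\KK_j$. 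Some such second mechanism is essential, and your writeup has nothing playing that role.

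A secondary, non-fatal slip: your eigenvalue assignment in the tensor-factor computation is reversed. For a single factor, $\Delta_0$ is the all-ones $|Y_i|\times|Y_i|$ matrix, so the all-ones vector carries eigenvalue $|Y_i|$ and its orthogonal complement carries eigenvalue $0$, not the other way around; accordingly the eigenvalue of $\Delta_{n-1}(Y)$ on $v_1\otimes\cdots\otimes v_n$ is $\sum_{i\,:\,v_i\parallel\mathbf 1}|Y_i|$. This doesn't affect integrality, but it is the opposite of the formula you wrote and of Proposition~\ref{eigenvalues of Delta(X)}.
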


\begin{proof} 
  We proceed by induction on $|\J|.$ For the base case, assume that $\J$ is the trivial group.  When $n=1,~Y$ is a finite set of vertices, $C_0(Y) = \C[Y]$ and $C_{-1}(Y)=\C[\emptyset].$ By definition, $\Delta_0(Y)$ is multiplication by a  $|Y| \times |Y|$ matrix of all ones.  Therefore, the eigenvalues of $\Delta_0(Y)$ are $|Y|,$ with multiplicity one, and $0$ with multiplicity $|Y|-1.$ Let $\{w_y\}_{y \in Y}$ be the preferred basis for $\C_0[Y]=\C[Y].$ An eigenvector for the $|Y|$-eigenspace is $\displaystyle\sum_{y \in Y} w_y.$  The eigenvectors for the $0$-eigenspace are all vectors of the form
  $$ \displaystyle\sum_{y \in Y} c_y w_y, ~ \displaystyle\sum_{y \in Y} c_y = 0.$$
  For $n>1,$ we first observe that $C_{n-1}(Y) = C_0(Y_1) \otimes \cdots \otimes C_0(Y_n)$ and  apply \cite[Theorem 4.10]{DR} to see that that if $\{v_i\}^n_{i=1}$ are eigenvectors of $\Delta_0(Y_i)$ with corresponding eigenvalues $\lambda_i,$ then 
  $v_1 \otimes \cdots \otimes v_n$ is an eigenvector of $\Delta_{n-1}(Y)$ with eigenvalue $\lambda_1 + \cdots + \lambda_n.$  There are enough eigenvectors of this form to produce a basis of $C_{n-1}(Y).$  Therefore $\Delta_{n-1}(Y)$ has integral eigenvalues.

  Now suppose $\J$ is not trivial. Let $\J_i$ be the kernel of the $\J$-action on $Y_i.$ All of the $\J_i$ are normal subgroups of $\J.$  The kernel of the $\J$ action on $F_{n-1}(Y)$ is $\displaystyle\cap^n_{i=1} \J_i.$ Since we are only concerned with the quotient space $Y/\J$, we might as well assume that this kernel is trivial and hence $\J$ acts freely on $F_{n-1}(Y).$ We consider two cases.   
  
  One, the $\J$-action on $F_{n-2}(Y)$ is also free.  We can characterize this case by looking at the subgroups $\KK_j = \displaystyle\cap_{i \neq j} \J_i.$ The $\J$-action on $F_{n-2}(Y)$ is free if and only if for all $j,~ \KK_j = \{1_\J\}.$ In this case, let $\pr: F_{n-1}(Y) \to F_{n-1}(Y/\J)$ be the quotient map and extend $\pi$ linearly to $p:C_{n-1}(Y) \to C_{n-1}(Y/\J).$
 Suppose $\sigma$ and $\tau$ are facets of $Y$ and $\sigma'= p(\sigma), \tau'=p(\tau).$    As usual, we let $C^\ast_{n-1}(Y), C^\ast_{n-1}(Y/\J), p^\ast$ and $\Delta^\ast_{n-1}$ denote the various dual spaces and maps.  Since $\Delta_{n-1}$ is symmetric, the matrices representing $\Delta_{n-1}$ and $\Delta^\ast_{n-1}$ are the same.  What is the entry of $\Delta_{n-1}(Y)$ corresponding to row $\sigma$ and column $\tau?$ Along the diagonal every entry is $n.$  If $\sigma \neq \tau,$ then the entry is the number of codimension-one faces $\sigma$ and $\tau$ have in common. When the action of $\H$ on $F_{n-2}$ is free, the exact same statement holds for  the entry corresponding to $\sigma'$ and $\tau' $ in $\Delta_{n-1}(Y/\J).$    This is enough to see that if $u$ is an eigenvector of $\Delta_{n-1}(Y/\J)$ with eigenvalue $\lambda,$ then $p^\ast(u^\ast)$ is an eigenvector of $\Delta^\ast_{n-1}(Y)$ with eigenvalue $\lambda$  which we already know is an integer.
 
Two,  the $\J$-action on $F_{n-2}(Y)$ is not free.  So we can choose $j$ such that $|\KK_j| > 1.$ For notational convenience we assume $j=n.$  Let  $Y'=Y/\KK_n=Y_1 \star \cdots \star Y_{n-1} \star (Y_n/\KK_n) .$  Then $Y/\J = Y'/(\J/\KK_n)$ and the induction hypothesis applied to $\J/\KK_n$ finishes the proof.  

 \end{proof}
 
 \begin{cor}
 The eigenvalues of $\Delta_{n-1}(X/\H)$ are all integers.
 \end{cor}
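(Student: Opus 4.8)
The plan is simply to recognize $X/\H$ as an instance of the quotient space $Y/\J$ studied in Theorem \ref{integer eigenvalues}. Take $\J = \H$, and for each $i \in [n]$ let $Y_i = \Gamma_i$, viewed as a discrete set of vertices on which $\H$ acts via $h \cdot v_i = v_i \cdot (h_i)^{-1}$, i.e. through the $i$-th coordinate. As observed just before Theorem \ref{integer eigenvalues}, $\H$ acts effectively freely on each $\Gamma_i$: the kernel of this action is the subgroup of $h \in \H$ with $h_i = 1_{\Gamma_i}$, and modulo that kernel the action on $\Gamma_i$ is (left multiplication by a subgroup of $\Gamma_i$, hence) free. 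The $Y_i$ are finite and can be taken disjoint, so the hypotheses of Theorem \ref{integer eigenvalues} are satisfied.

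With these choices, $Y = Y_1 \star \cdots \star Y_n = \Gamma_1 \star \cdots \star \Gamma_n = X$, and the diagonal $\H$-action on the faces of $Y$ is exactly the action used to define $X/\H$; hence $Y/\J = X/\H$ as simplicial posets. Applying Theorem \ref{integer eigenvalues} then shows that the eigenvalues of $\Delta_{n-1}(X/\H)$ are integers. There is no real obstacle here beyond checking that the setup of Theorem \ref{integer eigenvalues} genuinely specializes to the present situation, which is the verification of effective freeness recalled above.
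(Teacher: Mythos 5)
Your proof is correct and is exactly the argument the paper intends: the corollary is stated without proof because the paper has already noted, in the paragraph preceding Theorem \ref{integer eigenvalues}, that $\H$ acts effectively freely on each $\Gamma_x$ via its $x$-coordinate, so $X/\H$ is a special case of $Y/\J$. (One small slip: with the action $h\cdot v_i = v_i\cdot h_i^{-1}$, the induced action of $\H/\ker$ on $\Gamma_i$ is \emph{right} multiplication, not left, but freeness holds either way so the conclusion is unaffected.)
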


It is tempting to conjecture that the eigenvalues of $\Delta_i(Y/\J)$ are integers for all $0 \le i \le n-1.$ The following example due to A. Chen shows that even if $Y = X/\H$ this might be already false in codimension one and  $\Delta_0.$  

\begin{example} \cite{Ch}
Let $\Gamma = \Z/6\Z,~n=3$ and $\H$ be the subgroup generated by $(\bar{1},\bar{2},\bar{3})$ and $(\bar{2},\bar{1},\bar{4}).$  Then $X$ has three vertices $v_1, v_2,$ and $v_3.$  There are two edges between $v_1$ and $v_2,$ one edge between $v_1$ and $v_3$ and three edges between $v_2$ and $v_3.  $   The eigenvalues of $\Delta_0(X/\H)$ are $0, 6+\sqrt{3}$ and $6-\sqrt{3}.$
\end{example}

While the eigenvalues of $\Delta_i$ may not be integers for general $X/\H,$ P. Mao proved that the eigenvalues of all of the $\Delta_i$ are integers if $X/\H$ is a binary spherical quotient \cite{Mao}.  This has been extended by Chen to any $\H \le \G$ when $P(\H)$ is matroid \cite{Ch}.

Looking at Theorem \ref{binary eigenvalues} we are led to ask ``For general $\G$ and $\H$ what is the  analog of no coloops for a binary matroid?''  A binary matroid  realized by $\H \le (\Z/2\Z)^n$ has  no coloops if and only if there is no element $h \in \H$ which is all $0$'s except for a single $1.$ An essentially identical statement applies to any matroid represented over a field. In terms of the additive group of the vector space $(\Z/2\Z)^n$ this is the  same as saying  that there is no element $h \in \H$ all of whose coordinates except exactly one are the identity.   A third way of saying the same thing is to observe that these two conditions are equivalent to the action of $\H$ on $F_{n-2}(X)$ being free.  By  (\ref{one more}) all of these conditions are the same as $ r_\H(E-\{x\}) = r_\H(E).$

In contrast to matroids represented over a field, $r_\H(E -\{x\})<r_\H(E)$ does not imply that $r_\H(E) = r_\H(E-\{x\}) + r_\H(\{x\}).$  For instance, if $\alpha: \Gamma \to \Gamma$ is any nontrivial homomorphism which is not an automorphism, then $\H = \{(\gamma, \alpha(\gamma)): \gamma \in \Gamma\} \subseteq \Gamma^2$ has $0 < r_\H(\{2\}) < r_\H(\{1,2\}),$ but $r_\H(\{1,2\})= r_\H(\{1\}),$ so $r_H(\{1,2\}) < r_\H(\{1\}) + r_\H(\{2\}).$
In order to get a more detailed understanding of the  the combinatorial Laplacian on $X/\H$ along the lines of Theorem \ref{binary eigenvalues} we first write down the eigenvalues and eigenspaces of $X$ in terms of the irreducible representations of $\Gamma$ and $\G.$  Identify $\C[\G]$ with $C_{n-1}(X)$ using their usual isomorphisms with $\C[\Gamma_1] \otimes \cdots \otimes \C[\Gamma_n].$

\begin{prop} \label{eigenvalues of Delta(X)}
Let $W$ be a $\G$-invariant subspace of $\C[\G]$ equivalent to an irreducible representation $\rho= \rho_1 \otimes \cdots \otimes \rho_n$ of $\G.$ Then $W$ consists of eigenvectors of $\Delta_{n-1}(X)$ all of which have eigenvalue equal to $\displaystyle\sum_{x \in \triv (\rho)} |\Gamma_x|.$ Since $C_{n-1}(X)$ is the direct sum of such $W$, this  describes all of the eigenvalues and eigenvectors of $\Delta_{n-1}(X).$
\end{prop}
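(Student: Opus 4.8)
The plan is to compute $\Delta_{n-1}(X)$ explicitly in the preferred basis $\{w_g : g \in \G\}$ of $C_{n-1}(X)$, recognize it as a sum of commuting tensor operators, and then diagonalize one tensor factor at a time using the isotypic decomposition of the regular representation. First, since $\dim X = n-1$ we have $C_n(X)=\{0\}$, so $\partial_{n}$ is the zero map and $\Delta_{n-1}(X) = \partial_{n-1}^t\circ\partial_{n-1}$. The codimension-one faces of the join $X = \Gamma_1\star\cdots\star\Gamma_n$ are indexed by $\bigsqcup_{x\in E}\G_{E-\{x\}}$, and for a choice of orientation of $X$ one has $\partial_{n-1}(w_g)=\sum_{x\in E}(-1)^{x-1}w_{\pr_{E-\{x\}}(g)}$, the $n$ terms being distinct basis vectors (the face $\pr_{E-\{x\}}(g)$ records which coordinate was dropped). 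Since two such faces coincide only when the dropped coordinate agrees \emph{and} the remaining coordinates agree, and then the two signs multiply to $+1$, we get $\langle\partial_{n-1}w_g,\partial_{n-1}w_{g'}\rangle = \#\{x\in E : \pr_{E-\{x\}}(g)=\pr_{E-\{x\}}(g')\}$. Writing $D=\{y: g_y\neq g'_y\}$, this count equals $n$ if $g=g'$, equals $1$ if $|D|=1$, and equals $0$ if $|D|\geq 2$.

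From this I would observe that, under the identification $C_{n-1}(X)\cong\C[\Gamma_1]\otimes\cdots\otimes\C[\Gamma_n]$ from the excerpt, $\Delta_{n-1}(X)=\sum_{x\in E}A_x$, where $A_x = I\otimes\cdots\otimes J_x\otimes\cdots\otimes I$ and $J_x$ is the $|\Gamma_x|\times|\Gamma_x|$ all-ones matrix acting on the $x$-th factor $\C[\Gamma_x]$ (the matrix entry of $\sum_x A_x$ between $g$ and $g'$ is exactly $\#\{x: D\subseteq\{x\}\}$, matching the count above). Next I would invoke the representation theory of a single factor: $\tfrac{1}{|\Gamma_x|}J_x$ is the orthogonal projection of the regular representation $\C[\Gamma_x]$ onto its trivial subrepresentation $\C\cdot\sum_{\gamma\in\Gamma_x}w_\gamma$, so $J_x$ is $\Gamma_x$-equivariant and acts as the scalar $|\Gamma_x|$ on the trivial isotypic component of $\C[\Gamma_x]$ and as $0$ on every other isotypic component. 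The external tensor decomposition $\C[\G]=\bigoplus_{\rho}\big(\bigotimes_{x\in E}W_{\rho_x}\big)$, with $\rho=\rho_1\otimes\cdots\otimes\rho_n$ ranging over $\widehat{\G}$ and $W_{\rho_x}\subseteq\C[\Gamma_x]$ the $\rho_x$-isotypic component, is precisely the decomposition of $\C[\G]$ into $\widehat{\G}$-isotypic components, and each $A_x$ preserves it. Hence on the $\rho$-isotypic component $A_x$ acts as $|\Gamma_x|$ when $\rho_x$ is trivial and as $0$ otherwise, so $\Delta_{n-1}(X)=\sum_x A_x$ acts on that component as the scalar $\sum_{x\in\triv(\rho)}|\Gamma_x|$. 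Any $\G$-invariant subspace $W$ equivalent to $\rho$ lies inside this component by uniqueness of the isotypic decomposition, which yields the eigenvalue claim, and $\C[\G]$ being the direct sum of these components gives the last sentence of the statement.

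I do not expect a serious obstacle here. The only place I would be careful is the combinatorial bookkeeping of the iterated-join boundary map — the indexing of codimension-one faces, the orientation signs, and the verification that the $\partial_n\partial_n^t$ summand of $\Delta_{n-1}$ genuinely vanishes — together with the cross-check that the matrix of $\sum_x A_x$ agrees entry-for-entry with the matrix of $\partial_{n-1}^t\partial_{n-1}$. Everything after the identification $\Delta_{n-1}(X)=\sum_x A_x$ is a routine application of Schur's lemma and the isotypic decomposition applied factor by factor.
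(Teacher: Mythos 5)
Your proof is correct, and the underlying strategy is the same as the paper's: identify $C_{n-1}(X)\cong\C[\Gamma_1]\otimes\cdots\otimes\C[\Gamma_n]$, observe that the Laplacian respects this tensor decomposition as a sum of one-factor operators, and use the isotypic decomposition of each regular representation $\C[\Gamma_x]$ (trivial isotypic $\leftrightarrow$ eigenvalue $|\Gamma_x|$, everything else $\leftrightarrow$ eigenvalue $0$) to read off the spectrum of $\Delta_{n-1}(X)$ as the sum $\sum_{x\in\triv(\rho)}|\Gamma_x|$ over the isotypic pieces.

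The one genuine difference is how the join structure of the Laplacian is justified. The paper handles $n=1$ by inspection and then for $n>1$ simply cites Duval--Reiner (their Proposition~4.9) for the statement that eigenvalues of the top Laplacian of a join are sums of the factor eigenvalues; you instead prove that fact from scratch by computing the matrix of $\partial_{n-1}^t\partial_{n-1}$ entry by entry, showing the $(g,g')$ entry counts the coordinates $x$ with $\pr_{E-\{x\}}(g)=\pr_{E-\{x\}}(g')$, and then recognizing this as $\sum_{x}I\otimes\cdots\otimes J_x\otimes\cdots\otimes I$. Both routes are valid; yours is more self-contained (no external citation needed), at the cost of the orientation/bookkeeping details you flag at the end, all of which check out. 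A small point worth making explicit: the reason the orthogonal projection $\frac{1}{|\Gamma_x|}J_x$ is $\Gamma_x$-equivariant is that the preferred inner product on $\C[\Gamma_x]$ is $\Gamma_x$-invariant (the action permutes the basis), so the orthogonal projection onto the invariant subspace $\C\cdot\sum_\gamma w_\gamma$ commutes with the action; you use this implicitly but it is the crux of why each $A_x$ preserves the isotypic decomposition.
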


\begin{proof}
First consider $n=1.$ As in the proof of Theorem \ref{integer eigenvalues},  $\Delta_0(X)$ is a $|\Gamma| \times |\Gamma|$ matrix all of whose entries are $1.$ So it has two eigenvalues, $|\Gamma|$ and $0.$ The $|\Gamma|$-eigenspace  is one-dimensional and has eigenvector $\displaystyle\sum_{\gamma \in \Gamma} v_\gamma.$ The zero-eigenspace  is $(n-1)$-dimensional and consists of all vectors in $C_0(X)$ of the form
$$ \displaystyle\sum_{\gamma \in \Gamma} a_\gamma v_\gamma, ~ \displaystyle\sum_{\gamma \in \Gamma} a_\gamma = 0.$$
From this we see that $|\Gamma|$-eigenspace is the invariant subspace of $C_0(X)=\C[\Gamma]$ corresponding to the trivial representation and that the zero-eigenspace is the direct sum of all other irreducible invariant subspaces of the regular representation of $\Gamma.$  For $n>0$ apply this to \cite[Proposition 4.9]{DR}.
\end{proof}

As in the proof of Theorem \ref{integer eigenvalues}, we let $p:C_{n-1}(X) \to C_{n-1}(X/\H)$ be the map induced by the quotient map.   We also let $\ast$ denote the various duals of vector spaces, maps, and elements.

\begin{thm} \label{group eigenvalues}
Suppose $r_\H(E-x) = r_\H(E)$ for all $x \in E.$ Let $u$ be an eigenvector of $\Delta_{n-1}(X/\H)$. Then $\rho= p^\ast(u^\ast)$ is in $\Rep(\H)_1.$ The eigenvalue associated to $u$ is $\displaystyle\sum_{x \in \triv (\rho)} |\Gamma_x|.$
\end{thm}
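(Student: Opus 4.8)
The plan is to use the quotient map $p\colon C_{n-1}(X)\to C_{n-1}(X/\H)$ to transport eigenvectors of $\Delta_{n-1}(X/\H)$ to eigenvectors of $\Delta_{n-1}(X)$ inside the subrepresentation $\Rep_1(\H)$ of $\C[\G]=C_{n-1}(X)$, and then to identify their eigenvalues by Proposition \ref{eigenvalues of Delta(X)}. First I would record that the hypothesis $r_\H(E-x)=r_\H(E)$ for every $x$ is exactly the assertion that $\H$ acts freely on $F_{n-2}(X)$: the stabilizer in $\H$ of the codimension-one face indexed by $\bar g\in\G_{E-\{x\}}$ is $\{h\in\H:h_i=1_{\Gamma_i}\text{ for all }i\neq x\}=\ker(\pr_{E-\{x\}}\colon\H\to\H_{E-\{x\}})$, whose order is $b^{\,r_\H(E)-r_\H(E-\{x\})}$ by (\ref{one more}). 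Since $\H$ always acts freely on $F_{n-1}(X)$, we are in case ``One'' of the proof of Theorem \ref{integer eigenvalues}.

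Next I would invoke that case: because $\H$ acts freely on $F_{n-2}(X)$, the proof of Theorem \ref{integer eigenvalues} shows that if $u$ is an eigenvector of $\Delta_{n-1}(X/\H)$ with eigenvalue $\lambda$, then $p^\ast(u^\ast)$ is an eigenvector of $\Delta_{n-1}(X)$ with the same eigenvalue $\lambda$; and $p^\ast$ is injective since $p\,p^\ast=|\H|\cdot\id$. To see that $\rho:=p^\ast(u^\ast)$ lies in $\Rep_1(\H)$, note that $p$ is constant on $\H$-orbits of facets, so its adjoint takes values in the $\H$-fixed subspace of $\C[\G]$ for the action $h\colon w_g\mapsto w_{g h^{-1}}$; an element $\sum_g c_g w_g$ is fixed by this action precisely when its coefficients are constant on the right cosets of $\H$, which is the definition of $\Rep_1(\H)$. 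Comparing dimensions, $\dim C_{n-1}(X/\H)=|\G/\H|=\dim\Rep_1(\H)$, shows $p^\ast$ is an isomorphism onto $\Rep_1(\H)$.

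To finish, I would decompose $\Rep_1(\H)=\bigoplus_{\rho\in\Rep(\H)}W_\rho$ into irreducible $\G$-subspaces. By Proposition \ref{eigenvalues of Delta(X)}, $\Delta_{n-1}(X)$ acts on each $W_\rho$ as the scalar $\sum_{x\in\triv(\rho)}|\Gamma_x|$, so $\Delta_{n-1}(X)$ is already diagonal with respect to this decomposition of $\Rep_1(\H)$. Choosing the eigenbasis of $\Delta_{n-1}(X/\H)$ so that $p^\ast$ carries each of its eigenvectors into a single summand $W_\rho$ then gives, for each such $u$, that $\rho=p^\ast(u^\ast)\in\Rep_1(\H)$ and that the eigenvalue of $u$ equals $\sum_{x\in\triv(\rho)}|\Gamma_x|$. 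This is the $\Rep(\H)$-version of Theorem \ref{binary eigenvalues}, with $\triv(\rho)$ in place of $Z(h)$ and each $|\Gamma_x|$ in place of the factor $2$.

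The step I expect to be the main obstacle is the one quoted from the proof of Theorem \ref{integer eigenvalues}: checking carefully, orientations included, that the transfer $p^\ast$ respects the top-degree coboundary, which genuinely uses freeness of the $\H$-action on $F_{n-2}(X)$---otherwise two facets of $X/\H$ need not share the same number of codimension-one faces as their lifts and the comparison of Laplacians breaks down. Since that verification is already in place, the residual work here is mostly bookkeeping: identifying $\im p^\ast$ as the concrete subrepresentation $\Rep_1(\H)$ and matching the eigenvalue with $\triv(\rho)$ via Proposition \ref{eigenvalues of Delta(X)}.
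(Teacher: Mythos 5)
Your proof is correct and follows the same route as the paper: translate the rank hypothesis into freeness of the $\H$-action on $F_{n-2}(X)$, invoke case one of the proof of Theorem \ref{integer eigenvalues} to transport $u$ via $p^\ast$ to an eigenvector of $\Delta_{n-1}(X)$ with the same eigenvalue, observe by inspection that the image lies in $\Rep_1(\H)$, and identify the eigenvalue through Proposition \ref{eigenvalues of Delta(X)}. The paper's own proof is terser (it stops after placing $p^\ast(u^\ast)$ in $\Rep_1(\H)$, leaving the eigenvalue formula implicit), while your extra bookkeeping---injectivity of $p^\ast$ from $p\,p^\ast=|\H|\cdot\id$, the dimension count showing $p^\ast$ is an isomorphism onto $\Rep_1(\H)$, and the choice of eigenbasis landing each eigenvector in a single $W_\rho$---is exactly the detail needed to make the statement literally well-posed.
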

\noindent When $\Gamma_x = \Z/2\Z$ for all $x \in E$ this is equivalent to Theorem \ref{binary eigenvalues}.\\ 

\begin{proof}
   Since $r_\H(E-x) = r_\H(E)$ for all $x \in E,~\H$ acts freely on $F_{n-1}(X)$ and $F_{n-2}(X).$ Now, with $\H$ playing the role of $\J$ and every $Y_i = \Gamma_i,$ we follow the proof of Theorem \ref{integer eigenvalues} to the point that we know that if $u$ is an eigenvector of $\Delta_{n-1}(X/\H)$ with eigenvalue $\lambda,$ then $p^\ast(u^\ast)$ is an eigenvector of $\Delta_{n-1}(X)$ with eigenvalue $\lambda.$ By definition, the coefficients of $p^\ast(u^\ast)$ are constant on cosets of $\H$ and hence $p^\ast(u^\ast)$ is in $\Rep(\H)_1.$ 
    \end{proof}
  
    \begin{cor}
    Suppose $r_\H(E-x) = r_\H(E)$ for all $x \in E.$ Then $\lambda$ is an eigenvalue of $\Delta_{n-1}(X/\H)$ if and only if there exists $\rho \in \Rep(\H)$ such that $\lambda = \displaystyle\sum_{x \in \triv (\rho)} |\Gamma_x|.$ The multiplicity of $\lambda$ is
    $$\displaystyle\sum_{\rho \in \Rep(\H)} \dim \rho,$$
    
    where the sum is taken over all $\rho \in \Rep(\H)$ such that $\displaystyle\sum_{x \in \triv (\rho)} |\Gamma_x|=\lambda.$

    \end{cor}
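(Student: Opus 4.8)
The plan is to upgrade Theorem~\ref{group eigenvalues} from a one-directional statement about individual eigenvectors into a full spectral decomposition, by observing that the assignment $u \mapsto p^\ast(u^\ast)$ is a linear \emph{isomorphism} from $C_{n-1}(X/\H)^\ast$ onto $\Rep_1(\H)$ which conjugates $\Delta_{n-1}(X/\H)^\ast$ onto the restriction of $\Delta_{n-1}(X)^\ast$ to $\Rep_1(\H)$. Granting this, the corollary follows at once by decomposing $\Rep_1(\H)$ into irreducibles and quoting Proposition~\ref{eigenvalues of Delta(X)}.

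First I would record that $p:C_{n-1}(X)\to C_{n-1}(X/\H)$ is the surjection $w_g\mapsto w_{g\H}$; under the orthonormal identification $C_{n-1}(X)\cong\C[\G]$ its adjoint $p^\ast$ is injective with image exactly the functionals whose coefficients are constant on right cosets of $\H$, i.e. $\im p^\ast=\Rep_1(\H)$. Since right translation of $\H$ on the facets $\G$ is free (no hypothesis needed here), $\dim C_{n-1}(X/\H)^\ast=|\G|/|\H|$, while $\dim\Rep_1(\H)=\sum_{\rho\in\Rep(\H)}\dim\rho=|\G|/|\H|$ by Lemma~\ref{r* for E}; so $p^\ast$ is an isomorphism onto $\Rep_1(\H)$. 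Next, because the hypothesis $r_\H(E-x)=r_\H(E)$ for all $x$ forces $\H$ to act freely on $F_{n-2}(X)$ as well, Case One of the proof of Theorem~\ref{integer eigenvalues} (reproduced inside the proof of Theorem~\ref{group eigenvalues}) shows precisely that if $u$ is a $\lambda$-eigenvector of $\Delta_{n-1}(X/\H)$ then $p^\ast(u^\ast)$ is a $\lambda$-eigenvector of $\Delta_{n-1}(X)$. Together with the fact that $\Delta_{n-1}(X)$ is $\G$-equivariant (it permutes the preferred basis via simplicial automorphisms and commutes with $\partial$ and $\partial^t$), hence preserves the subrepresentation $\Rep_1(\H)$, and with eigenvectors spanning, this says $p^\ast\circ\Delta_{n-1}(X/\H)^\ast=\Delta_{n-1}(X)^\ast\circ p^\ast$. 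Thus $\Delta_{n-1}(X/\H)$ and $\Delta_{n-1}(X)\big|_{\Rep_1(\H)}$ have identical eigenvalues with identical multiplicities.

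Finally I would invoke Proposition~\ref{eigenvalues of Delta(X)}: writing $\Rep_1(\H)$ as the direct sum of irreducibles over the multiset $\Rep(\H)$, each summand equivalent to $\rho=\bigotimes_{x\in E}\rho_x$ is a $\Delta_{n-1}(X)$-eigenspace with eigenvalue $\sum_{x\in\triv(\rho)}|\Gamma_x|$ and contributes $\dim\rho$ to the multiplicity of that eigenvalue. Collecting all summands whose associated value equals a given $\lambda$ yields exactly the stated ``if and only if'' criterion for $\lambda$ to be an eigenvalue of $\Delta_{n-1}(X/\H)$, together with the multiplicity formula $\sum\dim\rho$ over $\rho\in\Rep(\H)$ with $\sum_{x\in\triv(\rho)}|\Gamma_x|=\lambda$. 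The only point demanding any care is the identification $\im p^\ast=\Rep_1(\H)$ paired with the operator compatibility $p^\ast\Delta^\ast_{n-1}(X/\H)=\Delta^\ast_{n-1}(X)p^\ast$; but both are essentially the bookkeeping already done in the proof of Theorem~\ref{group eigenvalues}, so there is no genuine obstacle — the corollary is a repackaging of that theorem plus a dimension count and Proposition~\ref{eigenvalues of Delta(X)}.
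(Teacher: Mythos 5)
Your proposal is correct and follows essentially the same route as the paper's (very terse) proof: both hinge on the observation that $p^\ast$ is an injection taking eigenvectors of $\Delta_{n-1}(X/\H)$ to eigenvectors of $\Delta_{n-1}(X)$ inside $\Rep_1(\H)$ (this is Theorem \ref{group eigenvalues}), and that $\dim C_{n-1}(X/\H)=|\G/\H|=\dim\Rep_1(\H)$ forces $p^\ast$ to be an isomorphism onto $\Rep_1(\H)$, after which Proposition \ref{eigenvalues of Delta(X)} hands you the spectrum. You spell out the intertwining $p^\ast\Delta^\ast_{n-1}(X/\H)=\Delta^\ast_{n-1}(X)p^\ast$ explicitly, which the paper leaves implicit, but this is elaboration rather than a different argument.
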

    \begin{proof} Since $p:C_{n-1}(X) \to C_{n-1}(X/\H)$ is surjective, $p^\ast$ is injective.  The dimension of $C_{n-1}(X/\H) = |F_{n-1}(X/\H)| = |\G/\H| = \dim \Rep(\H).$
    
    \end{proof}

 \begin{rem} 
   If $r_\H(A \cup \{x\}) = r_\H(A)$ for all $A \subseteq E,~|A| \ge n-i, x \in E, $ then similar arguments determine all the eigenvalues and eigenspaces of $\Delta_{n-i}(X/\H)$ in representation theoretic terms. Thus, under the same conditions,  the spectrum of $\Delta_{n-i}(X/\H)$ only depends on $P(\H).$
   \end{rem}
   
   We end by using  $\Delta_{n-1}(X/\H)$ to interpret the dual Crapo-Rota critical theorem for finite groups as a statement about the topology of $X/\H.$   
   
   \begin{thm} \label{char poly as top dimension}
   The dimension of $H_{n-1}(X/\H; \C)$ is 
   \begin{equation} \label{top Crapo-Rota}
   \displaystyle\sum_{\stackrel{\rho \in \Rep(\H)}{\triv \rho = \emptyset}}\dim \rho = \mathlarger{\chi}_{P(\H)^\ast}(|\Gamma|).
   \end{equation}
   \end{thm}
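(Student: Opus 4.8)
The plan is to compute $\dim H_{n-1}(X/\H;\C)$ by a transfer argument that ties it to the top homology of $X$ itself — which Proposition \ref{eigenvalues of Delta(X)} already describes representation-theoretically — and then to recognize the resulting count as the right-hand side of (\ref{top Crapo-Rota}) by invoking the dual Crapo--Rota theorem with $k=1$.

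First I would pin down $H_{n-1}(X;\C)$. Since $\dim X = n-1$ there are no $n$-cells, so $\Delta_{n-1}(X) = \partial^t_{n-1}(X)\circ\partial_{n-1}(X)$ and $\ker\Delta_{n-1}(X) = \ker\partial_{n-1}(X) = H_{n-1}(X;\C)$ (there are no $(n-1)$-boundaries either). Inside $C_{n-1}(X) = \C[\G] = \bigoplus_{\rho\in\widehat{\G}}\rho^{\oplus\dim\rho}$, Proposition \ref{eigenvalues of Delta(X)} identifies this kernel as the sum of those isotypic components $\rho^{\oplus\dim\rho}$ with $\sum_{x\in\triv\rho}|\Gamma_x| = 0$; since every $|\Gamma_x|\ge 1$, that condition is exactly $\triv\rho = \emptyset$. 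Hence, as a $\G$-representation, $H_{n-1}(X;\C)\cong\bigoplus_{\rho\in\widehat{\G},\ \triv\rho=\emptyset}\rho^{\oplus\dim\rho}$.

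Next I would pass to the quotient. An element of $\H$ carries the face $\G_S$ to itself only when its $x$-coordinate is the identity for every $x\in S$, so any cell fixed set-wise is fixed pointwise; consequently the chain complex of $X/\H$ is the complex of $\H$-coinvariants of $C_\bullet(X)$ (this is the concrete content of the map $p:C_{n-1}(X)\to C_{n-1}(X/\H)$ used in the proof of Theorem \ref{group eigenvalues}). Because $\C[\H]$ is semisimple, the coinvariants functor is exact and commutes with homology, and coinvariants are canonically isomorphic to invariants, so $H_{n-1}(X/\H;\C)\cong H_{n-1}(X;\C)^\H$. By Frobenius reciprocity, $\dim(\rho|_\H)^\H = \langle\rho,\mathrm{Ind}_\H^\G 1\rangle_\G = \langle\rho,\Rep_2(\H)\rangle_\G$, which is the multiplicity of $\rho$ in the multiset $\Rep(\H)$. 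Summing over $\rho$ with $\triv\rho=\emptyset$, weighted by $\dim\rho$, gives
$$\dim H_{n-1}(X/\H;\C) = \sum_{\rho\in\widehat{\G},\ \triv\rho=\emptyset}(\dim\rho)\cdot(\text{mult.\ of }\rho\text{ in }\Rep(\H)) = \sum_{\substack{\rho\in\Rep(\H)\\ \triv\rho=\emptyset}}\dim\rho,$$
which is the first equality in (\ref{top Crapo-Rota}). The second equality is Theorem \ref{Dual Crapo-Rota} specialized to $k=1$: there the set $Y(\emptyset)$ becomes $\{\rho\in\Rep(\H):\triv\rho=\emptyset\}$, and the stated identity reads $\sum_{\rho\in Y(\emptyset)}\dim\rho = \chi_{P^\ast}(b)$, which is $\chi_{P(\H)^\ast}(|\Gamma|)$ when the $\Gamma_x$ are all copies of $\Gamma$ and $b=|\Gamma|$.

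The one delicate point is the transfer step: one must be sure the quotient map induces exactly the coinvariant quotient on chains, i.e.\ that no element of $\H$ reverses the orientation of a cell it stabilizes — and that is precisely the regularity already built into the description of $X/\H$ as a simplicial poset. With that in hand the argument requires no hypothesis on $P(\H)$; coloops are absorbed automatically, since by Lemma \ref{dual coloops} a coloop $x$ lies in $\triv\rho$ for every $\rho\in\Rep(\H)$, so the sum over $\triv\rho=\emptyset$ is empty, consistent with $X/\H$ being a cone (hence $H_{n-1}(X/\H;\C)=0$) in that case.
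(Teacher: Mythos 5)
Your proof is correct, and for the first equality it tracks the paper's argument closely: both identify $H_{n-1}(X;\C)$ with $\ker\Delta_{n-1}(X)$, decompose it into isotypic components via Proposition~\ref{eigenvalues of Delta(X)}, and pass to the quotient by taking $\H$-invariants in characteristic zero. Where the paper cites Bredon directly for $H_{n-1}(X/\H;\C)\cong H_{n-1}(X;\C)^\H$, you give a more hands-on version through coinvariants and Frobenius reciprocity; the admissibility point you flag (stabilizers of cells fix them pointwise, so no orientation issues arise in the chain-level quotient) is exactly the condition that makes this work, and is satisfied here for the same reason the paper's quotient is a simplicial poset.

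The genuine divergence is in the second equality. You obtain $\sum_{\triv\rho=\emptyset}\dim\rho = \mathlarger{\chi}_{P(\H)^\ast}(|\Gamma|)$ by specializing Theorem~\ref{Dual Crapo-Rota} to $k=1$ — which is perfectly valid, and indeed the paper's own proof opens by observing that this equality is precisely the $k=1$ case of the dual critical theorem. However, the paper then deliberately gives an \emph{independent} topological derivation via the reduced Euler characteristic of $X/\H$: since $\tilde H_i(X/\H;\C)$ vanishes except at $i=n-1$, the reduced Euler characteristic equals $(-1)^{n-1}\dim H_{n-1}(X/\H;\C)$, and a direct face count writes it as $(-1)^{n-1}\mathlarger{\chi}_{P(\H)^\ast}(|\Gamma|)$. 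Your route is shorter and logically sound; the paper's buys an Euler-characteristic proof of the $k=1$ dual critical theorem that does not pass through M\"obius inversion, which is the point of advertising the proof as ``independent.''
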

   
   \begin{proof}
   The  equality in (\ref{top Crapo-Rota}) is the dual Crapo-Rota critical theorem for finite groups with $k=1.$   Here we give an independent topological proof.  
   
   The known formulas for the homology of the join of simplicial complexes show that the only nontrivial reduced homology group of $X$ is $\tilde{H}_{n-1}(X).$ Since $\C$ has characteristic zero, $H_i(X/\H;\C)$ equals the elements of $H_i(X;\C)$ left fixed by the action of $\H$ on $H_i(X;\C)$ \cite[Theorem 2.4, pg. 120]{Br}.   By the Hodge theorem, $H_{n-1}(X;\C)$ is the kernel of $\Delta_{n-1}(X).$  Proposition \ref{eigenvalues of Delta(X)} tells us that this kernel is the direct sum of all of the irreducible representations $\rho$ in the regular representation of $\G$ with $\triv \rho = \emptyset.$  By definition, $\Rep(\H)$ are those $\rho$ left fixed by the $\H$-action.  Hence the dimension of $H_{n-1}(X/\H)$ equals the L.H.S. of (\ref{top Crapo-Rota}).  
   
   To see the last equality we consider the reduced Euler characteristic of $X/\H.$  As the only nontrivial reduced homology group of $X/\H$ is $\tilde{H}_{n-1}(X/\H),$ the reduced Euler characteristic of $X/\H$ is $(-1)^{n-1} \dim H_{n-1}(X/\H;\C).$  On the other hand, we can compute the reduced Euler characteristic by counting faces.  Let $S \subseteq E.$ Define $F_S$ to be  the number of faces  of $X/\H$ corresponding to cosets of the form $\G_S/\H_S.$  Then 
   $$|F_S|=\left\vert \frac{\G_S}{\H_S} \right\vert = \frac{|\Gamma|^{|S|}}{|\H_S|} = |\Gamma|^{|S|-r_\H(S)} = |\Gamma|^{r^\ast_\H(E) - r^\ast_\H(E-S)}.$$
   \noindent
   Thus the reduced Euler characteristic of $X/\H$ is
   $$\displaystyle\sum_{S \subseteq E} (-1)^{|S|-1} |F_S| = \displaystyle\sum_{S \subseteq E} (-1)^{|S|-1} |\Gamma|^{r^\ast_\H(E) - r^\ast_\H(E-S)}$$
   $$ = (-1)^{n-1}\displaystyle\sum_{S \subseteq E} (-1)^{|E-S|} |\Gamma|^{r^\ast_\H(E) - r^\ast_\H(E-S)}
 = (-1)^{n-1} \mathlarger{\chi}_{r^\ast_\H} (|\Gamma|).$$
   
   \end{proof}


\begin{thebibliography}{10}
 
 \bibitem{Br}
G. Bredon,
 \newblock {\em Introduction to compact transformation groups},
 \newblock vol. 46, Pure and Applied Mathematics, 
 \newblock Academic Press, New York and London, 1972.

\bibitem{BKP}
O. Bernardi, T. K\'alm\'an and A. Postnikov,
\newblock Universal Tutte polynomial,
\newblock {\em Adv. in Math.}, {\bf 402} (2022), 108355.

 \bibitem{BO}
 T. Brylawski and J. Oxley,
 \newblock {\em The Tutte polynomial and its applications},
 \newblock in Matroid Applications, Encycl. Math. Appl. 40,
 \newblock Cambridge Univ. Press, Cambridge, 1992, 123—225.
 
 \bibitem{BSZ}
 K. Bauer, D. Sen and P. Zvengrowski,
 \newblock A generalized Goursat lemma,
 \newblock {\em Tatra Mt. Math. Publ.}, {\bf 64} (2015), 1–19.
 
 \bibitem{CCLMZ}
 F. Castillo, Y. Cid-Ruiz, B. Li, J. Monta\~no, and N. Zhang,
 \newblock  When are multidegrees postive?,
 \newblock  {\em Adv. in Math.}, {\bf 374} (2020), 107382 (34 pp.).
 
 \bibitem{Ch}
 A. Chen,
 \newblock Personal communication.
 
 \bibitem{CMSW}
L. Ch\'avez-Lomel\'i, C. Merino, G. R. S\'anchez and G. Whittle,
\newblock A new polynomial for polymatroids,
\newblock Australasian J. of Comb., {\bf 80(3)} (2021), 342—360.

 
 \bibitem{CR}
 H. Crapo and G-C. Rota,
\newblock {\em On the foundations of combinatorial theory:  Combinatorial geometries},
\newblock Preliminary edition, M.I.T. press, Cambridge Mass., 1970. 


 
\bibitem{DKS}
S. Dougherty, J. Kim and P. Sol\'e,
\newblock Open problems in coding theory
\newblock in, {\em Noncommutative rings and their applications},
\newblock  Contemp. Math. {\bf 634}, 2015, 79–99.





\bibitem{DP}
J. Doziuk and V. Patodi,
\newblock Riemannian structures and triangulations of manifolds,
\newblock {\em J. Indian Math. Soc. (N.S.)}, 40: 1 —52, 1976.

\bibitem{DR}
A. Duval and V. Reiner,
\newblock Shifted simplicial complexes are Laplacian integral,
\newblock {\em Trans. Amer. Math. Soc.}, 354: 4313—4344, 2002.

 \bibitem{DW}
 X. Dong and M. Wachs,
 \newblock Combinatorial Laplacian of the matching complex,
 \newblock {\em Elect. J. Comb.}, 9: R17, 2002.
 
 \bibitem{E}
 J. Edmonds,
 \newblock 
Submodular functions, matroids and certain polyhedra,
\newblock in {\em Combinatorial structures and their applications. Proc. Calgary Inter. Conf 1969},
\newblock pp. 69—87.
\newblock Gordon and Breach, NY, 1970.

\bibitem{EL}
 C. Eur and M. Larson, 
\newblock Intersection theory of polymatroids,
\newblock {\em Int. Math. Res. Notices},  https://doi.org/10.1093/imrn/rnad213.

\bibitem{EM}
Eds. J. Ellis-Monaghan and I. Moffatt,
\newblock Handbook of a the Tutte polynomial and related topics,
\newblock CRC Press, Boca Raton, New York, London, 2022. 



\bibitem{FH}
 J.~Friedman and P. Hanlon,
\newblock  On the Betti numbers of chessboard complexes,
\newblock  {\em J. of Alg. Comb.}, 8: 193–203, 1998.



\bibitem{GGMS}
I. Gelfand, M. Goresky, R. MacPherson, and V. Serganova,
\newblock Combinatorial geometries, convex polyhedra, and Schubert cells,
\newblock {\em Adv. in Math.}, {\bf 63} (1987), 301–316.

\bibitem{GGW}
J. Geelen, B. Gerards and G. Whittle,
\newblock Solving Rota's Conjecture,
\newblock {\em Notices of the AMS}, {\bf 61} (7), 2014: 736—743.

\bibitem{Go}
E. Goursat,
\newblock Sur les substitutions orthogonales et les divisions r\'eguli\'eres de l'espace,
\newblock {\em Ann. Sciu. \'Ecole Norm. Sup.}, {\bf 6} (1889), 9 — 102. 

\bibitem{Gr}
C. Greene,
\newblock Weight enumeration and the geometry of linear codes,
\newblock {\em Stud. Appl. Math.}, 55 (1976): 119–126. 

\bibitem{H}
T.Helgason,
\newblock Aspects of the theory of hypermatroids,
\newblock in {\em Hypergraph seminar}, 
\newblock (eds. C. Berge and D. Chaudhuri)
\newblock LNM 411, Springer, Berlin-New York, 1974,
\newblock pp. 191—214.




\bibitem{JMW}
S. Jowett, S. Mo and G. Whittle,
\newblock Connectivity functions and polymatroids,
\newblock Adv. in Appl. Math., {\bf 81} (2016), 1—12.  
  
  
 \bibitem{KMR}
J. Kung, G.-C. Rota and M. Murty,
\newblock On the R\'edei Zeta Function,
\newblock {\em J. of Numb. Theory}, {\bf 12}, 1980, 421—436.


\bibitem{KRS}
W. Kook and V. Reiner and D. Stanton,
\newblock Combinatorial Laplacians of matroid complexes,
\newblock {\em J. Amer. Math. Soc.}, 13 (1):129 — 148, 1999.

\bibitem{Ku}
J. Kung, 
\newblock {\em A source book in matroid theory},
\newblock Birkhauser, Boston, 1986.  




\bibitem{Mao}
 P. Mao,
\newblock {\em Cyclic flats in spectrum of combinatorial Laplacian of binary spherical quotients},
\newblock Senior thesis, Cornell University, 2022.  

\bibitem{McD}
C. McDiarmid,
\newblock Rado's theorem for polymatroids,
\newblock {\em Math. Proc. Camb. Phil. Soc.}, {\bf 78} (1975), 263—281.

\bibitem{Mc}
 F. J. MacWilliams,
 \newblock A theorem on the distribution of weights in a systematic code,
 \newblock {\em Bell System Tech. J.}, {\bf 42} (1963), 79--94.

\bibitem{Na1}
 T. Nakasawa,
 \newblock Zur Axiomatik der linearen Abh\"angigkeit I,
 \newblock {\em Sci. Rep. Toyko Bunrika Daigaku}, {\bf 2} (1935), 235–255.

\bibitem{Na2}
T. Nakasawa,
\newblock Zur Axiomatik der linearen Abh\"angigkeit II,
 \newblock {\em Sci. Rep. Toyko Bunrika Daigaku}, {\bf 3} (1936), 123—136.

\bibitem{Na3}
 T. Nakasawa,
 \newblock Zur Axiomatik der linearen Abh\"angigkeit III,
 \newblock {\em Sci. Rep. Toyko Bunrika Daigaku}, {\bf 3} (1936), 45—69.

\bibitem{OW}
J. Oxley and G. Whittle,
\newblock A characterization of Tutte invariants of $2$-polymatroids,
\newblock {\em J. of Comb. Theory}, Series B,
\newblock {\bf 59} (1993), 210–244. 

\bibitem{OSW}
J. Oxley,  C. Semple and G. Whittle,
\newblock A wheels and whirls theorem for $3$-connected $2$-polymatroids, 
\newblock {\em SIAM J. Disc. Math.}, {\bf30 (1)}: 493–524, 2016.

\bibitem{Ox}
J. Oxley,
\newblock {\em Matroid Theory}, 2nd ed.,
\newblock Oxford Grad. Text. Math. 21,
\newblock Oxford Univ. Press, Oxford, 2011.

\bibitem{Po}
A. Postnikov,
\newblock Permutohedra, Associahedra, and Beyond,
\newblock {\em Int. Math. Res. Not.}, (2009), No. 6, 1026 — 1106.

\bibitem{RS}
I. Rival and M. Stanford,
\newblock Algebraic aspects of partition lattices,
\newblock in {\em Matroid Applications}, 106—122,
\newblock Cambridge University Press, Cambridge, 1992. 

\bibitem{Se}
J. P. Serre,
\newblock {\em Linear representations of finite groups},
\newblock Springer-Verlag, New York, 1977.


\bibitem{St}
R.P. Stanley,
\newblock $f$-vectors and $h$-vectors of simplicial posets,
\newblock {\em J. of Pure and Applied Algebra},  71:319 - 331, 1991.


\bibitem{Sw-t}
E.~Swartz,
\newblock Matroids and quotients of spheres,
\newblock Ph.D. thesis, U. Maryland, 1999.

\bibitem{Sw}
E.~Swartz,
\newblock Matroids and quotients of spheres,
\newblock {\em Math. Z.}, 241: 247 - 269, 2002.



\bibitem{Tu}
W. Tutte,
\newblock A homotopy theorem for matroids, II,
\newblock {\em Trans. Amer. Math. Soc.}, {\bf 88} (1958), 161—174.

\bibitem{We}
P. Wentworth-Nice,
\newblock A duality for nonabelian group codes,
\newblock in preparation.

\bibitem{We2}
P. Wentworth-Nice,
\newblock Representability of polymatroids over finite groups,
\newblock in preparation.

\bibitem{Wh2}
H. Whitney,
\newblock Non-separable and planar graphs,
\newblock {\em Trans. Amer. Math. Soc.}, {\bf 34} (1932), 339—362.

\bibitem{Wh}
H. Whitney,
\newblock On the abstract properties of linear dependence,
\newblock {\em Amer. J. Math.}, 57 (3):509–533, 1936.



\bibitem{Whitt}
G. Whittle,
\newblock A geometric theory of hypergraph coloring,
\newblock {\em Aequationes Math.}, 43: 45—58, 1992.

\bibitem{Whitt2}
G. Whittle,
\newblock On matroids representable over $GF(3)$ and other fields,
\newblock{\em Trans. Amer. Math. Soc.}, 349 (2): 579—603, 1997.

\bibitem{Whitt3}
G. Whittle,
\newblock Duality in polymatroids and set functions,
\newblock {\em Comb., Prob. and Comp.}, {\bf 1} (1992), 275—280.


\bibitem{Xu}
A. Xue,
\newblock Combinatorial Laplacian on binary spherical quotients,
\newblock Senior thesis, Cornell University, 2021.

\bibitem{Za}
T.~Zaslavsky,
\newblock The ~{M\"{o}bius} function and the characteristic polynomial,
\newblock in N.L. White, editor, {\em Combinatorial geometries}, Cambridge
  Universtiy Press, 1987.
  
 

\end{thebibliography}
\end{document}